\documentclass[11pt]{amsart}  
\usepackage{amssymb, latexsym, amsmath, amsfonts}
\usepackage{float}

\restylefloat{figure}
\usepackage{graphicx}
\usepackage{xcolor}

\newtheorem{thm}{Theorem}[section]
\newtheorem{cor}[thm]{Corollary}
\newtheorem{lem}[thm]{Lemma}
\newtheorem{prop}[thm]{Proposition}
\theoremstyle{definition}    
\newtheorem{defn}[thm]{Definition}
\theoremstyle{remark}
\newtheorem{rem}[thm]{Remark}  
\newtheorem{ex}[thm]{Example}
\numberwithin{equation}{section}

\setlength{\oddsidemargin}{0in} \setlength{\evensidemargin}{0in}
\setlength{\textwidth}{6.3in} 
\setlength{\topmargin}{-0.2in}
\setlength{\textheight}{9.0in}

\begin{document}
\subjclass{Primary: 32A05; Secondary: 32A07}
\author{G. P. Balakumar}
\address{G.P. Balakumar: 
Indian Institute of Technology, Palakkad, India}
\email{gpbalakumar@gmail.com}
\pagestyle{plain}

\begin{abstract}
It is a classical fact that domains of convergence of power series of several complex variables are characterized as logarithmically convex complete Reinhardt domains; let $D \subsetneq \mathbb{C}^N$ be such a domain. We show that a necessary as well as sufficient condition for a power series $g$ to have $D$ as its domain of convergence is that it 
admits a certain decomposition into elementary power series; specifically, $g$ can be expressed as a sum of a sequence of power series $g_n$ with the property that each of the logarithmic images $G_n$  of their domains of convergence are  half-spaces, all containing the logarithmic image $G$ of $D$ and such that the largest open subset of $\mathbb{C}^N$ on which all the $g_n$'s and $g$ converge absolutely is $D$. In short, every power series admits a decomposition into elementary power series. The proof of this leads to a new way of arriving at a constructive proof of the aforementioned classical fact. This proof inturn leads to another decomposition result in which the $G_n$'s are now wedges formed by intersections of pairs of {\it supporting} half-spaces of $G$. Along the way, we also show that in each fiber of the restriction of the absolute map to the boundary of the domain of convergence of $g$, 
there exists a singular point of $g$. 
\end{abstract} 

\title{Structure theorems for Power Series in Several Complex Variables.}
\maketitle

\section{Introduction.}

\noindent Decomposing functions in some general class of functions as a sum of special functions in that class, is a broad theme in mathematics that needs no further motivation. A purpose of this article, among others, is to attain decomposition results for functions defined by power series of several complex variables, valid on their domains of convergence.
It is very well-known that the domain of convergence of every power series $g(z)$ of $N$ complex variables say, $z=(z_1, \ldots ,z_N)$
is a logarithmically convex complete Reinhardt domain $D \subset \mathbb{C}^N$ -- we shall assume unless otherwise mentioned, that $g$ is a convergent power series i.e., that $D$ is non-empty (and for the most part that $D \neq \mathbb{C}^N$). In the case 
$N=1$, every such domain is just a disc, possibly of infinite radius in which case the domain is just $\mathbb{C}$ and unless this happens, the domains of convergence (in this case $N=1$) are all biholomorphically, infact linearly, equivalent. In contrast to this case which is too simple in this context, as soon as the dimension $N$ gets bigger than one, these domains are in general never biholomorphically equivalent. Nevertheless they share a key common feature namely, as already mentioned, their images under the standard logarithmic map 
\[
\lambda(z)= (\log \vert z_1 \vert, \ldots, \log \vert z_N \vert)
\]
(applied to only those $z$ none of whose coordinates is zero), are convex domains in 
$\mathbb{R}^N$. Recall that every convex domain can be realized as the intersection of its supporting half-spaces (unless we are in the trivial case that the convex domain is $\mathbb{R}^N$ which is the only exception) and in this sense, half-spaces are both the simplest and special instances of convex domains, after the whole space $\mathbb{R}^N$ itself. It immediately follows therefore that every logarithmically convex complete Reinhardt (proper) domain in $\mathbb{C}^N$ can be expressed as the intersection of elementary Reinhardt domains i.e., domains obtained as inverse images of half-spaces in $\mathbb{R}^N$, under the map $\lambda$. Corresponding to these expressions of general domains in terms of simple ones, we shall show first that every (convergent) power series can be realized as a (countable) sum of elementary power series, where by an elementary power series, we mean a power series with the property that the logarithmic image of the domain of its convergence is a half-space in $\mathbb{R}^N$ or $\mathbb{R}^N$ itself. This result is a triviality in dimension one, as every power series in one variable is already an elementary power series. Thus, we shall focus only on the case $N>1$ without further mention, though our results hold true in the case $N=1$ as well (for trivial reasons!). The ideas in the proof of the aforementioned result leads to a constructive proof that every logarithmically convex complete Reinhardt domain is the domain of convergence of some power series, which inturn leads to a variation of the aforementioned decomposition result, all explained below. Converse of these decomposition results also holds and put together, these may be viewed as decomposition/structure theorems for power series of several complex variables. The proofs require working out some refined versions of results in convex analysis, which may be of 
some independent interest as well. \\

\noindent Postponing the recollection of notions from convex analysis and some non-standard terminology to the next section, we now proceed towards
stating some of the main results of this article. \textcolor{blue}{However, we preface (as well as postface) the statements with a couple of remarks, so as to help the reader discern the idea of the results even without
the full knowledge of the precise meanings of a few of the terms involved. To elucidate the first theorem about the decomposition into elementary power series as already indicated, let us begin with some simple ways of writing down examples of elementary power series. To this end, fix any 
$J \in \mathbb{N}_0^N \setminus \{0\}$ (where $\mathbb{N}_0 = \mathbb{N} \cup \{0\}$) and consider the sequence $\{J^k \; : \; k \in \mathbb{N}\}$
defined by $J^k = k J $ for all $k \in \mathbb{N}$ i.e., the sequence consisting of all positive integral multiples of the fixed 
$N$-tuple $J$. Consider then, any power series of the form
\[
f(z) = \sum_{k\in \mathbb{N}} c_{k} z^{J^k} = \sum_{k\in \mathbb{N}} c_{k} z^{kJ}.
\]
Strictly speaking, we have not written $f$ in its usual standard form -- monomials $z^J$ for $J \not \in \langle J \rangle := \{ kJ : k \in \mathbb{N}\}$ do not make an explicit appearance in the above, meaning that their coefficients are all zero (and are therefore not written, as is also usual). However, this does not mean that the $c_k$'s above are all non-zero; it only means that we are considering such a power series $f$ whose non-zero coefficients are among the $c_k$'s as above.
The logarithmic image $G_f$ of the domain of convergence $D_f$ of $f$ may then be written explicitly (by an analogue of the Cauchy -- Hadamard formula for power series in several complex variables; see propositions \ref{Shabatprop} and \ref{logimagecharac} of the preliminaries section) as: 
\[
G_f = \{ s \in \mathbb{R}^N \; : \; \limsup_{k \to \infty} \left( \langle J^k/\vert J^k \vert, s \rangle +  
\log \vert c_k \vert^{1/\vert J^k \vert} \right)\; < 0 \}.
\]
Note that the members of the sequence here, namely $\{J^k = kJ\}$ as mentioned above, all have the same ratio in common namely, $J^k / \vert J^k \vert = J /\vert J \vert$ where
$\vert \cdot \vert$ denotes the $l^1$-norm. Let $\alpha$ denote this common ratio i.e., $\alpha= J/\vert J \vert$ and $c_f$ the constant
\[
c_f= \frac{1}{\vert J \vert} \left( \limsup_{k \to \infty} \log \vert c_k \vert^{1/\vert J^k \vert} \right).
\]
With these notations, we may rewrite $G_f$ also as,
\[
G_f = \{ s \in \mathbb{R}^N \; : \; \langle \alpha, s \rangle +  c_f < 0 \} 
\]
which, unless $G_f$ is empty or the whole space $\mathbb{R}^N$ (depending upon whether $c_f=\infty$ or $c_f=-\infty$), is clearly a half-space with gradient $\alpha \in PS_N$ where $PS_N$ denotes the `probability simplex' (or the non-negative face of the standard simplex $S_N$, the unit ball with respect to the $l^1$-norm) given by 
\[
PS_N = \{ s \in \mathbb{R}^N \; : \; s_1 + \ldots + s_N =1, \; \text{and } s_j \geq 0 \text{ for all } j \}.
\]
Let $\pi$ denote the `radial' projection onto $S_N$ i.e., the map $\pi: \mathbb{R}^N \setminus \{0\} \to S_N$ given by 
\[
\pi(z) = \frac{z}{\vert z \vert_{l^1}} = \frac{1}{\vert z \vert_{l^1}} (z_1, \ldots, z_N),
\]
and note that $\pi(\mathbb{N}_0^N \setminus \{0\})$ is contained in $PS_N$; indeed, $\pi(\mathbb{N}_0^N \setminus \{0\})=PS\mathbb{Q}^N$, the set of all points on $PS_N$ with rational coordinates. Stated differently, the inverse image under $\pi$ of $PS\mathbb{Q}^N$ is $\mathbb{N}_0^N \setminus \{0\}$ which is the set that indexes the summation in any power series in $N$-variables when written in its usual standard form.
Therefore, we observe that $\mathbb{N}_0^N \setminus \{0\}$ may be spilt up 
as the {\it disjoint} union of the countably many sequences $\{J^k_l \; : \; k \in \mathbb{N}\} $ indexed by $l \in \mathbb{N}$ with the property that 
$\pi(J^k_l)=\alpha_l$ where $\{\alpha_l \; : \; l \in \mathbb{N}\}$ is some enumeration of the rational points in $PS_N$. This observation leads us to the following decomposition of any given power series $g(z) = \sum c_J z^J$ into elementary power series
\begin{equation} \label{naiveeqn}
g(z)= \text{constant term} + \sum_{l \in \mathbb{N}} g_l(z)
\end{equation}
where $g_l$ is the elementary power series given by 
\[
g_l(z) = \sum_{k\in \mathbb{N}} c_{J^k_l} z^{J^k_l}.
\]
That this is indeed an elementary power series (i.e., the logarithmic image of the domain of its convergence is a half space in $\mathbb{R}^N$ or 
all of $\mathbb{R}^N$ itself)
is a consequence of the analogue of the Cauchy -- Hadamard formula already mentioned and will be detailed later in the sequel. Note that for each fixed $l \in \mathbb{N}$, the $J^k_l$'s (for all $k \in \mathbb{N}$) are all integer multiples of $\alpha_l$; thereby, we may 
view the summation in the decomposition (\ref{naiveeqn}) as being indexed by $\alpha_l$'s (rather than by $l \in \mathbb{N}$) i.e.,
 by $PS\mathbb{Q}^N$, the set of all rational points on $PS_N$.
However, this decomposition is a bit too naive, atleast if one wants to use this idea to write down a concrete power series whose domain
of convergence matches with a prescribed logarithmically convex complete Reinhardt domain in $\mathbb{C}^N$, as done in theorem \ref{mainthm} below. Moreover, we may indicate here itself, the naivety of this
decomposition briefly with a simple example as follows. Consider an elementary power series in two complex variables $z,w$ of the form 
$f(z,w) = \sum z^{j_n}w^{k_n}$ where $(j_n,k_n) \in \mathbb{N}^2$ have the property that $(j_n,k_n)/(j_n + k_n) \to \alpha$ for some non-rational point $\alpha \in PS_2$; for concreteness say, $\alpha=(\sqrt{2} -1, 1)/\sqrt{2} \not \in PS \mathbb{Q}^2$, the set of all points on $PS_2$ with 
rational coordinates. Then, it is not difficult to see that the domain of convergence of $f$ is $\lambda^{-1}(G_f)$ where $G_f$ is the 
half-space in $\mathbb{R}^2$ bounded by the line described by the equation $(\sqrt{2}-1)x + y=0$ whose gradient is $\alpha$. While we may very well write a decomposition of the form given by equation (\ref{naiveeqn}) above for $f$, it must be noted that the $g_l$'s therein always have the feature that the logarithmic images of their domains of convergence are half-spaces bounded by hyperplanes with {\it rational} gradient or the whole space $\mathbb{R}^N$ itself. Applied to the two-variable power series $f$ just-mentioned, it can be verified with some contemplation that all of the corresponding $g_l$'s are polynomials and thereby that their domains of convergence are all $\mathbb{C}^2$ itself (thereby their intersection 
is $\mathbb{C}^2$ as well) whereas, the domain of convergence of $f$ is a proper subdomain of $\mathbb{C}^2$. It must be noted also in this example that, the supporting hyperplane at each point of $\partial G_f$ is the line $\partial G_f$ itself which has irrational slope; and, no hyperplane (which in the setting of this example is a line in $\mathbb{R}^2$)  with rational slope, has
even the property that a half-space bounded by it contains $G_f$, let alone support $\partial G_f$ (since every such hyperplane intersects $\partial G_f$). In short, the domains of convergence of the
$g_l$'s that occur in (\ref{naiveeqn}) above may well not have a tangible relationship to that of $g$ and so, the decomposition (\ref{naiveeqn}) is more of a formal one. To deal inclusively with examples such as these, we are led to start with a countable 
dense subset of the effective domain $C_h$ of the support function $h=h_g$ of the convex domain $G =G_g \subset \mathbb{R}^N$ given by the logarithmic image of the 
domain of convergence $D=D_g$ of the given power series $g$. Basic facts about domains of convergence of power series imply that the image of the convex cone $C_h$ under the map $\pi$ namely $\pi(C_h)$, must be contained in the non-negative face of the simplex (namely $PS_N$ in the aforementioned notation); we denote $\pi(C_h)$ by $PS_h$ and call it the {\it normalized effective domain} of $h$. To spell out one last time briefly, the necessity to introduce all this, note that the normalized effective domain of the support function of $G_f$ -- in the aforementioned example $f$ of a power series in two complex variables -- is the singleton $\{\alpha\}$ which is a non-rational point in $PS_2$, thereby it is not possible to obtain a decomposition of the form (\ref{naiveeqn}) such that the intersection of the domains of convergence of the elementary power series given by the summands $g_l$ on the right of (\ref{naiveeqn}) equals or even (atleast) contains that of $f$. One final terminology, before we proceed to lay down the full statement our first main theorem: we say that an elementary power series is proper, if the logarithmic image of the domain of its convergence is a proper half-space (i.e., a domain in $\mathbb{R}^N$ bounded by a hyperplane, so that in particular it is not all of $\mathbb{R}^N$); we shall assume $D \neq \mathbb{C}^N$ in the theorems below for convenience (for otherwise, the results are in a sense trivial)}. 

\begin{thm} \label{eldecompthm}
Every power series can be decomposed as a sum of elementary power series; infact, these elementary power series can be chosen to be proper as soon as the domain of convergence of the given power series is not all of $\mathbb{C}^N$ and to have several
properties as detailed next. Let $D \subsetneq \mathbb{C}^N$ be any logarithmically convex complete Reinhardt domain. Let $h=h_G$ be the support function of the convex domain 
in $\mathbb{R}^N$ given by its logarithmic image $G=\lambda(D)$; let $PS_h$ denote the normalized effective domain of the support function $h$. For any given countable dense subset $\mathcal{C}$ of $PS_h$, we have the following.
For every power series 
$g(z) = \sum c_J z^J$ with $D$ as its domain of convergence, there exists a sequence of elementary power series 
$\{g_n(z) \; : \; n \in \mathbb{N}_0 \}$ such that:
\begin{itemize}
\item[(i)] the $g_n$'s are mutually monomial-wise completely different power series, all of which are sub-series of the given series $g$
and $g_0$ is the constant term of $g$, 
\item[(ii)] if we write $g_n$ in standard form as $g_n(z) = \sum\limits_{k \in \mathbb{N}} c_{J^n_k} z^{J^n_k} $, then 
$\pi(J^n_k) = J^n_k/\vert J^n_k \vert$ converges as $k \to \infty$, to the point $\alpha^n$ belonging to the prescribed set $\mathcal{C}$. Furthermore, the coefficients of each $g_n$ satisfy
\[
\limsup_{k \to \infty} \log \vert c_{J^n_k} \vert^{1/\vert J^n_k \vert } = -c(\alpha^n).
\]
where $c(\alpha)$ is the function on $PS_N$ defined in terms of the coefficients of $g$ by
\[
c(\alpha) =  - \sup \big\{\limsup_{n \to \infty} \big(\frac{\log \vert c_{J^n} \vert}{\vert J^n \vert} \big) \; : \; \{J^n\} \in S_{\alpha} \big\},
\]
\item[(iii)] the largest open subset of $\mathbb{C}^N$ where all the $g_n$'s \textcolor{red}{and $g$} converge absolutely is $D$, on which we also have the $g_n$'s rendering a decomposition of $g$ (into elementary power series) in the sense that: $\sum g_n(z) = g(z)$ holds for all $z \in D$. 
\end{itemize} 
\end{thm}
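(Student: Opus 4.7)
The plan is to reduce the theorem to a combinatorial partition problem on the support of $g$. By the Cauchy--Hadamard analogue (propositions \ref{Shabatprop}--\ref{logimagecharac}), any elementary sub-series $\sum_k c_{J^k} z^{J^k}$ of $g$ with $\pi(J^k) \to \alpha$ and $\limsup_k \log|c_{J^k}|^{1/|J^k|} = -c(\alpha)$ has log-image of convergence equal to the open half-space $H_\alpha := \{s \in \mathbb{R}^N : \langle \alpha, s \rangle < c(\alpha)\}$, which contains $G$ and (when $\alpha \in PS_h$) is a supporting half-space of $G$ with outward normal $\alpha$. Everything will follow once I exhibit a partition of $\mathrm{supp}(g) \setminus \{0\}$ into countably many disjoint infinite blocks $I_n = \{J^n_k\}_{k \in \mathbb{N}}$, one for each $\alpha^n$ in a fixed enumeration of $\mathcal{C}$, so that for every $n$ one has $\pi(J^n_k) \to \alpha^n$ and $\limsup_k \log|c_{J^n_k}|^{1/|J^n_k|} = -c(\alpha^n)$.

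To build such a partition I would proceed in two stages. First, for each $\alpha^n$, unfold the supremum in the definition of $c(\alpha^n)$ by a diagonal extraction to obtain a \emph{certifying} sequence $\{K^n_k\}_k \subseteq \mathrm{supp}(g) \setminus \{0\}$ with $\pi(K^n_k) \to \alpha^n$ and $\limsup_k \log|c_{K^n_k}|^{1/|K^n_k|} = -c(\alpha^n)$; after thinning I can arrange that, for suitable cutoffs $k_0(n)$, the tails $\{K^n_k\}_{k \geq k_0(n)}$ lie in pairwise disjoint neighbourhoods of their respective $\alpha^n$ on $PS_N$ (and hence are themselves pairwise disjoint). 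Second, enumerate $\mathrm{supp}(g) \setminus \{0\} = \{M_1, M_2, \ldots\}$ and assign each $M_i$ to a block $I_n$ by the following priority rule: if $M_i = K^n_k$ for some $n$ and some $k \geq k_0(n)$, place it in $I_n$; otherwise, place it in $I_n$ where $n \leq i$ is the smallest index with $|\pi(M_i) - \alpha^n| < \epsilon_i$, for a prescribed sequence $\epsilon_i \downarrow 0$. Such an $n$ exists for $i$ large because $\pi(\mathrm{supp}(g)) \subseteq PS_h$ (an Abel-type observation) and $\mathcal{C}$ is dense in $PS_h$; the finitely many exceptional small-$i$ cases can be absorbed into any fixed $I_n$ without harm. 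With this assignment each $I_n$ is infinite, its enumeration satisfies $\pi(J^n_k) \to \alpha^n$, and the required limsup equality holds because the upper bound $\limsup_k \log|c_{J^n_k}|^{1/|J^n_k|} \leq -c(\alpha^n)$ is automatic from the definition of $c(\alpha^n)$ as a supremum over $S_{\alpha^n}$, while the matching lower bound comes for free from the certifying subsequence surviving inside $I_n$.

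With the partition in hand, set $g_n(z) = \sum_k c_{J^n_k} z^{J^n_k}$ and $g_0 = c_0$; the $g_n$'s are sub-series of $g$ with pairwise disjoint supports, each is elementary by the remark opening this plan, and each log-domain $G_n = H_{\alpha^n}$ contains $G$. For the reverse inclusion $\bigcap_n G_n \subseteq G$, use that $G$ is the intersection of its open supporting half-spaces combined with a density argument: since $\mathcal{C}$ is dense in $PS_h$ and the function $\alpha \mapsto c(\alpha)$ is appropriately (lower) semi-continuous on $PS_h$, intersection only over $\alpha \in \mathcal{C}$ already recovers $G$. On $D$ the identity $\sum_n g_n(z) = g(z)$ is then just a rearrangement of the absolutely convergent series $g$ since the $I_n$'s partition $\mathrm{supp}(g)$. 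The principal obstacle is Stage~2: the priority scheme must thread a narrow needle, simultaneously preserving $\pi(J^n_k) \to \alpha^n$ for each block, equality in the limsup, and exhaustion of $\mathrm{supp}(g)\setminus\{0\}$; this is delicate because the certifying $K^n_k$'s themselves lie among the $M_i$'s and so must be tracked carefully in the bookkeeping.
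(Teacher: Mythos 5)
Your overall strategy is the same as the paper's: extract, for each $\alpha^n\in\mathcal{C}$, a certifying sequence realizing the supremum that defines $c(\alpha^n)$ (this is precisely lemma \ref{cRealiztn}), disjointify these sequences, and then distribute the remaining multi-indices into the blocks so that each block still projects to $\alpha^n$ and inherits the required limsup from its certifying subsequence. The gap is in your Stage 2 assignment rule, and it is concrete: the assertion that $\pi(\mathrm{supp}(g))\subseteq PS_h$ (your ``Abel-type observation'') is false, and with it the claim that every non-certifying $M_i$ with $i$ large finds some $\alpha^m\in\mathcal{C}$ within $\epsilon_i$ of $\pi(M_i)$. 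Take $N=2$ and
\[
g(z,w)=\sum_{n\geq 1} z^nw^n+\sum_{n\geq 1}2^{-n^2}w^n .
\]
The two sub-series are mutually monomial-wise completely different and the second is entire, so $D=\{\vert zw\vert<1\}$, $G=\{s+t<0\}$, and $h$ is finite only in the direction of $(1,1)$; hence $PS_h=\{(1/2,1/2)\}$ and $\mathcal{C}$ is forced to be this singleton. Every monomial $w^n$ has nonzero coefficient and projects to $(0,1)$, which is at $l^1$-distance $1$ from $\mathcal{C}$; once $\epsilon_i<1$ your rule assigns none of them, so infinitely many nonzero monomials of $g$ are left out of every block and $\sum_n g_n\neq g$, violating (iii). (Nonzero coefficients in a direction only give, for each fixed $s\in G$, a bound $\langle\pi(J),s\rangle\leq(\log M(s)-\log\vert c_J\vert)/\vert J\vert$ with $M(s)$ depending on $s$, which does not make $h(\pi(J))$ finite.) Any repair must route such leftover monomials into the blocks so that each block receives only finitely many extra terms --- finitely many added monomials perturb neither $\pi(J^n_k)\to\alpha^n$ nor the limsup --- which is the role played in the paper's proof by the explicit re-absorption of the left-out indices; your scheme has no mechanism for this, and its stated justification for not needing one is wrong.

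A secondary but genuine error: you assert that $H_{\alpha^n}=\{s:\langle\alpha^n,s\rangle<c(\alpha^n)\}$ is the supporting half-space of $G$ with normal $\alpha^n$, and you use this together with semicontinuity of $c$ to conclude $\bigcap_n G_n\subseteq G$. That amounts to claiming $c=h$ on $PS_h$, which is exactly what theorem \ref{charbysuppfn} and the surrounding discussion warn against: only the convex closure of $c$ equals $h$, and $c$ can differ from $h$ at points of $PS_h$. What is true, and all that is needed, is $h\leq c$, so each $G_n$ merely contains the supporting half-space and hence contains $G$. The reverse inclusion you aim for is not claimed by part (iii): since $g$ itself is included in the list there, the largest open set on which $g$ and all of its sub-series $g_n$ converge absolutely is automatically $D$ once the $g_n$ exhaust the monomials of $g$ --- which brings you back to the exhaustion problem above as the one point that must be fixed.
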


\begin{rem}
It will be shown in the course of proving this theorem that its condition (ii) can be interpreted geometrically. Namely, it is tantamount to the condition that the logarithmic image $G_n=\lambda(D_n)$ of the domain of convergence $D_n$ of each $g_n$ is a half-space, {\it with normal vector belonging to the prescribed set} $\mathcal{C} \subset PS_h$ i.e., $G_n$ is of the form
\[
G_n = \{ s \in \mathbb{R}^N \; : \; \langle \alpha^n, s \rangle - c(\alpha^n) <0 \}.
\]
It will be shown that the convex closure of the function $c$ above is the support function of $G$. This means in particular that each $G_n$ contains the supporting half-space to $G$ with normal vector $\alpha^n$ and the theorem can be paraphrased by saying that every power series $g$ can be 
decomposed as a sum of mutually monomial-wise completely different, transversal (in the sense of definitions \ref{1stpowdefn}, \ref{2ndpowdefn} below) elementary power series $g_n$'s all of whose domains of convergence contain that of $g$. So, this means  that condition (ii) actually implies part of the assertion of (iii) in the above theorem namely, that the largest open subset of $\mathbb{C}^N$ on which all the $g_n$'s converge absolutely contains $D$. We have nevertheless mentioned this explicitly for some convenience at this stage; it must also be noted that part-(iii) does not say that the intersection of the $D_n$'s equals $D$ but rather that it contains $D$. Also owing to this formulation of condition (iii), the converse of the theorem holds trivially. For a more non-trivial converse, see proposition
\ref{partialconverse} and the remark \ref{remonc-condn} below.
\end{rem}

\noindent It will also be shown that the intersection of the half-spaces $\{ s \in \mathbb{R}^N \; : \; \langle \alpha^n, s \rangle - c(\alpha) <0 \}$ for $\alpha$ running through $PS_N$, equals $G$. Now, the foregoing theorem leaves to be desired that the intersection of the $D_n$'s reduce precisely to 
$D$ (it will be seen that this happens if the function $c$ in (ii) above, 
is convex\footnote{It seems somewhat unlikely that the half-spaces which are the logarithmic images of the domains of convergence of the summands $g_n$'s can always be ensured to be supporting half-spaces for $G$, particularly if the condition that the summands be sub-series of the given series has also to be met, among the other conditions of theorem \ref{eldecompthm}; this is based on matters discussed later in the article, in particular on the fact that the convex closure of a function is almost never equal to the function itself. However, this is only a conjecture of this author and could be wrong as well!}). This is ensured in the following theorem, which however does not ensure that the summands in the decomposition are sub-series of the given series. To summarize the essence of this theorem in a single sentence, we first make the following definition.  

\begin{defn}
We say that a power series is {\it simple}, if the logarithmic image (in $\mathbb{R}^N$) of the domain of its convergence is expressible as the intersection of finitely many half-spaces; in such a case, we say that the domain of convergence is simple.
\end{defn}

\begin{thm} \label{structurethm}
Every power series can be decomposed as a sum of countably many simple power series with several properties; to be more precise and informative,
let $D \subsetneq \mathbb{C}^N$ be any logarithmically convex complete Reinhardt domain. Let $h=h_G$ be the support function of the convex domain 
in $\mathbb{R}^N$ given by its logarithmic image $G=\lambda(D)$; let $PS_h$ denote the normalized effective domain of the support function $h$. For any given countable dense subset $\mathcal{C}=\{\alpha^n\}$ of distinct elements of $PS_h$, we have the following.
Every power series 
$g(z) = \sum c_J z^J$ with $D$ as its domain of convergence can be decomposed
as a sum of countably many simple power series $\sigma_n(z)$ ($n \in \mathbb{N}_0$)
such that:
\begin{itemize}
\item[(i)] the domain of convergence of each $\sigma_n$ is a simple logarithmically convex complete Reinhardt domain $D_n$ whose
logarithmic image $\lambda(D_n) \subset \mathbb{R}^N$ is actually a wedge i.e., the intersection of just two half-spaces bounded by hyperplanes whose normal vectors 
are the distinct elements $\alpha^n,\alpha^{n-1}$ belonging 
to the prescribed countable dense subset $\mathcal{C}$, 
\item[(ii)] the pair of half-spaces whose intersection gives the wedge $\lambda(D_n)$ are both supporting half-spaces of the convex domain $G$; thereby, the intersection of the $D_n$'s is precisely $D$,
\item[(iii)] the largest open subset of $\mathbb{C}^N$ where all the $\sigma_n$'s converge absolutely is $D$, on which 
the $\sigma_n$'s render a decomposition of $\sigma$ (into simple power series) in the sense 
that $\sum \sigma_n(z) = g(z)$ for all $z \in D$; the summation here runs from $n=0$ to $\infty$, and $\sigma_0$ is the constant term of $g$.
\end{itemize} 
\end{thm}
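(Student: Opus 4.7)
The strategy is to construct each simple summand $\sigma_n$ so that the logarithmic image of its convergence domain is a prescribed supporting wedge of $G$, and then to redistribute (with telescoping corrections) the monomials of $g$ across the $\sigma_n$'s so that $\sum_n \sigma_n = g$ on $D$.

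For each $n \ge 1$, let $H_n := \{s \in \mathbb{R}^N : \langle \alpha^n, s\rangle < h(\alpha^n)\}$ denote the open supporting half-space of $G$ with outward normal $\alpha^n$; since $\alpha^n \in PS_h$, the value $h(\alpha^n)$ is finite. Put $W_n := H_n \cap H_{n-1}$ for $n \ge 2$ and $W_1 := H_1$ (with $\sigma_0$ the constant term of $g$ as stipulated). Because $\mathcal{C}$ is dense in $PS_h$ and any closed convex set equals the intersection of the closed supporting half-spaces with normals in the effective domain of its support function, one has $\bigcap_n W_n = G$. Thus once $\lambda(D_n) = W_n$ has been arranged, the intersection statement in (ii) is immediate, and (i) is built into the choice of $W_n$.

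By the Cauchy--Hadamard-type characterization of Propositions \ref{Shabatprop} and \ref{logimagecharac}, a monomial series has logarithmic image of convergence domain equal to $W_n$ precisely when two sub-sequences $\{J^n_k\}_k$ and $\{I^n_k\}_k$ of multi-indices with $\pi(J^n_k) \to \alpha^n$ and $\pi(I^n_k) \to \alpha^{n-1}$ have corresponding coefficients attaining $\limsup_k \log |\cdot|^{1/|\cdot|} = -h(\alpha^n)$ and $-h(\alpha^{n-1})$ respectively, while no other sub-sequence produces a stronger constraint in any other direction; the existence of such series is guaranteed by the constructive proof of the classical characterization of domains of convergence alluded to in the introduction. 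To match $g$ itself, I would partition $\mathbb{N}_0^N \setminus \{0\}$ into infinite blocks $B_n$, each chosen (using the density of $\mathcal{C}$ in $PS_h$) to contain such a pair of sub-sequences with projections tending to $\alpha^n$ and $\alpha^{n-1}$, and tentatively set $\sigma_n := \sum_{J \in B_n} c_J z^J$. Wherever the natural growth of $g$'s coefficients on these sub-sequences falls short of the supporting values, I insert finite correcting monomials in $\sigma_n$ and cancel them by subtracting identical monomials from some other $\sigma_{n'}$, so the corrections telescope globally while each $\sigma_n$ is nudged up to the exact required limsups.

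The main obstacle is this correction step. Three constraints must be met simultaneously: (a) each $\sigma_n$ attains \emph{exactly} the supporting limsups $-h(\alpha^n)$ and $-h(\alpha^{n-1})$ on its two privileged sub-sequences, so $\lambda(D_n)$ is $W_n$ and not a strictly larger set; (b) no other sub-sequence within $B_n$ develops a limsup-direction that would violate the wedge $W_n$; and (c) the corrections cancel pairwise, so $\sum_n \sigma_n = g$ termwise. The convex-analytic core is (b): one must exploit that the convex closure of the function $c$ from Theorem \ref{eldecompthm} is the support function $h$, so that any growth prescription compatible with a supporting wedge of $G$ is globally consistent with $h$ and does not accidentally violate a supporting half-space at some other direction. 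The bookkeeping for (a) and (c) — placing the correcting monomials at indices where the ambient growth is harmless and pairing them across consecutive $\sigma_{n'}$'s — is then a technical but tractable step, and the absolute convergence of $\sum_n \sigma_n$ on $D$ follows automatically from $\bigcap_n W_n = G$ together with the supporting-wedge structure of each $W_n$.
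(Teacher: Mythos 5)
Your overall target is the right one --- wedges $W_n=H_n\cap H_{n-1}$ cut out by consecutive supporting half-spaces, with telescoping corrections so the sum returns $g$ --- but the proof collapses exactly at the step you yourself flag as "the main obstacle," and one concrete ingredient as stated cannot work. First, \emph{finite} correcting monomials cannot alter a $\limsup$: to raise $\limsup_k \log\vert c_{J_k}\vert^{1/\vert J_k\vert}$ from $-c(\alpha^n)$ up to the supporting value $-h(\alpha^n)$ you must inject an \emph{infinite} sub-series of monomials with moduli pinned at $e^{-\vert J\vert h(\alpha^n)}$ along a subsequence with $\pi(J)\to\alpha^n$. The paper's device is precisely such an auxiliary elementary series $f_n(z)=\sum_k e^{-\vert I^n_k\vert h(\alpha^n)}z^{I^n_k}$ (as in theorem \ref{mainthm}), whose convergence domain is exactly $\lambda^{-1}(H_n)$; by lemma \ref{samepairlem} the combination $g_n+f_n/n$ then has domain exactly $\lambda^{-1}(H_n)$ rather than something larger. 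Second, your cancellation scheme is underdetermined in a way that matters: subtracting a correction supported in direction $\alpha^n$ from some $\sigma_{n'}$ imposes the extra half-space constraint $H_n$ on $D_{n'}$, so preserving the two-face wedge structure forces the subtraction to land in the \emph{consecutive} summand, i.e.\ the telescoping must be $\sigma_{n+1}=\bigl(g_{n+1}+\tfrac{f_{n+1}}{n+1}\bigr)-\tfrac{f_n}{n}$; and the factor $1/n$ is not decoration --- it is what makes the partial sums $g_1+\cdots+g_n+\tfrac{1}{n}f_n$ converge to $g$ on $D$.

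Your constraint (b) --- stray subsequences inside a block $B_n$ generating a limsup-direction that violates $W_n$ --- is a genuine obstruction to your block design, not a technicality, and you do not resolve it; the appeal to ${\rm cl}({\rm conv}\,c)=h$ only gives one-sided bounds ($c\geq h$ on $PS_h$) and does not prevent a third direction from cutting $D_n$ down below the wedge, nor does it force equality in the two privileged directions. The paper avoids the problem entirely by never putting two privileged directions inside one sub-series of $g$: each block $g_n$ consists of monomials of $g$ whose indices \emph{all} project to a sequence converging to the single direction $\alpha^n$, so by lemma \ref{half-space-Lem} its domain has logarithmic image a single half-space containing $H_n$; the second face of the wedge comes from the \emph{subtracted} auxiliary term $-f_{n-1}/(n-1)$, which is monomial-wise completely different from $g_n+f_n/n$, so lemma \ref{domcvgcesuminter} gives $\lambda(D_n)=H_n\cap H_{n-1}$ exactly. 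A consequence worth noting: the summands $\sigma_n$ are then \emph{not} sub-series of $g$ (the theorem does not claim they are), whereas your plan of extracting both privileged subsequences from $g$'s own monomials within one block is both harder and unnecessary.
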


\noindent In the subsection that intermediates between the proofs of the foregoing pair of theorems, we shall give as an illustration of the ideas in the proof of the first theorem, a constructive proof of the classical fact that every logarithmically convex complete Reinhardt domain $D$ is the domain of convergence of some power series; indeed, as theorem \ref{eldecompthm} asserts that every power series is decomposable into elementary power series, it is only natural to construct a power series obtained by summing up elementary power series, the logarithmic images of whose domains of convergence are supporting half-spaces for $G=\lambda(D)$. We hasten to mention that constructive proofs of this fact is not entirely new. Infact, they can be traced back to the more than a century old seminal work 
\cite{Hartogs} of Hartogs in German (the translation of which into English is as yet unavailable) and it was possibly rediscovered by Russian authors in \cite{AM} who however addressed the case only when $D$ is bounded; a better exposition can be found in the lecture notes by H. Boas \cite{Bo} who has also developed the proof to tackle the case when $D$ is unbounded. Our proof here is somewhat different from these and will not split into cases depending on whether $D$ is bounded or not. Moreover, the ideas in this proof will inturn be employed in the proof of the above theorem \ref{structurethm}. For now, let us only lay down the statement here. 

\begin{thm}\label{mainthm}
Let $D$ be any given logarithmically convex complete Reinhardt domain 
in $\mathbb{C}^N$ with its logarithmic image denoted by $G$.
Let $h$ denote the support function of this convex domain $G \subset \mathbb{R}^N$, 
with $PS_h$ denoting its normalized effective domain. Let 
$\{ \alpha^n : n \in \mathbb{N} \}$ be any given countable dense subset of $PS_h$. Pick any 
doubly-indexed sequence
$\{ J^{nk} : n,k \in \mathbb{N}\}$ of distinct points from 
 $\mathbb{N}_0^N \setminus \{0\}$ with the property that
 for each $n \in \mathbb{N}$, $\pi (J^{nk}) \to  \alpha^n$ 
as $k \to \infty$ (such a sequence always exists). Then, for each $n \in \mathbb{N}$, the elementary power series with positive coefficients 
given by
\[
f_n(z) = \sum_{k \in \mathbb{N}} c_{nk}z^{J^{nk}} 
\]
{\bf \textcolor{red}{where $c_{nk} =e^{-\vert J^{nk} \vert h(\alpha^n)}$}} has the logarithmic image of its domain of convergence equalling a supporting half-space of the given $G$; in particular, they all converge absolutely on the given $D$.
Furthermore, their sum
\begin{equation} \label{splpowseries1}
f(z)= \sum f_n(z) = \sum\limits_{n,k \in \mathbb{N}} c_{nk}  z^{J^{nk}},
\end{equation}
which is indeed a power series 
by the distinctness of $J^{nk}$'s (no `like-terms' have to be combined), converges absolutely and uniformly on $D$ and $D$ is the domain of convergence of the power series given by $f$ above. 
\end{thm}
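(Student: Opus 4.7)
I will verify each claim by combining the multivariate Cauchy--Hadamard formula (Propositions \ref{Shabatprop} and \ref{logimagecharac}) with elementary convex analysis, proceeding in three steps.

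First, I determine the domain of convergence of each $f_n$. Since $\log|c_{nk}|^{1/|J^{nk}|} = -h(\alpha^n)$ is independent of $k$ while $\pi(J^{nk}) \to \alpha^n$ as $k \to \infty$, the characterization of $G_{f_n}$ collapses to
\[
G_{f_n} = \{s \in \mathbb{R}^N : \langle \alpha^n, s\rangle < h(\alpha^n)\},
\]
the open supporting half-space of $G$ with inward normal $\alpha^n$. It is proper because $\alpha^n \in PS_h$ forces $h(\alpha^n) < \infty$, and $G \subset G_{f_n}$ by the very definition of $h = h_G$; hence $D \subset D_{f_n}$ and $f_n$ converges absolutely on $D$.

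Second, for absolute and uniform convergence of $f = \sum_n f_n$ on compact subsets of $D$, I fix a compact $K \subset D$, interpolate a logarithmically convex complete Reinhardt domain $D'$ with $K \subset D' \Subset D$, and pick $\eta > 0$ so that $G' + \eta[-1,1]^N \subset G$ where $G' = \lambda(D')$. This yields the uniform support-function gap $\langle \alpha, s\rangle \leq h(\alpha) - \eta$ for every $s \in \overline{G'}$ and every $\alpha \in PS_N$ with $h(\alpha) < \infty$. Writing
\[
c_{nk}|z^{J^{nk}}| = e^{|J^{nk}|\bigl(\langle \pi(J^{nk}) - \alpha^n,\,s\rangle + \langle \alpha^n, s\rangle - h(\alpha^n)\bigr)}
\]
and exploiting $\pi(J^{nk}) \to \alpha^n$ to drive the error factor to zero for $k$ large, one obtains a bound of the shape $c_{nk}|z^{J^{nk}}| \leq e^{-|J^{nk}|\eta/2}$ for all but finitely many $(n,k)$. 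The main technical obstacle is that for small $k$, $\pi(J^{nk})$ may sit far from $\alpha^n$; this is handled by the observation that the $J^{nk}$'s are distinct in $\mathbb{N}_0^N \setminus \{0\}$, so only finitely many have $l^1$-norm below any fixed threshold $T$. A large-$T$ splitting isolates a finite exceptional part, bounded uniformly on $K$, while the remaining terms are majorised by the convergent series $\sum_{J \in \mathbb{N}_0^N \setminus \{0\}} e^{-|J|\eta/2}$.

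Finally, for $D_f = D$: Step two already shows $D \subset D_f$, so only $D_f \subset D$ remains. Since each $f_n$ is a sub-series of $f$ with positive coefficients, absolute convergence of $f$ at a point forces absolute convergence of every $f_n$ there, giving $G_f \subset \bigcap_n G_{f_n} = \bigcap_n \{s : \langle \alpha^n, s\rangle < h(\alpha^n)\}$. The standard representation $\overline{G} = \{s : \langle \alpha, s\rangle \leq h(\alpha) \text{ for all } \alpha \in PS_h\}$, combined with the density of $\{\alpha^n\}$ in $PS_h$ and the continuity of $h$ on the relative interior of $PS_h$, reduces this last intersection to $G$. Boundary directions of $PS_h$ at which $h$ may fail to be continuous are dealt with by convex perturbation: a separating $\alpha^* \in PS_h$ with $\langle \alpha^*, s\rangle > h(\alpha^*)$ can be shifted slightly into the relative interior by taking a small convex combination with a fixed interior direction, preserving strict separation while permitting the density set to be used. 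Thus $G_f \subset G$ and $D_f = D$, completing the proof.
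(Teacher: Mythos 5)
Your Step 1 is correct and coincides with the paper's own computation of the half-spaces $H_n$ via Lemma \ref{half-space-Lem}, and your Step 3 is essentially an inline re-derivation of the paper's Theorem \ref{cvxgeomfound} (reduction to a countable dense subfamily of supporting half-spaces using continuity of $h$ on the relative interior and a one-sided limit at the relative boundary); the inclusion $G_f \subset \bigcap_n G_{f_n} = G$ is sound. The genuine gap is in Step 2, and it is not a repairable piece of bookkeeping. Your estimate $c_{nk}\vert z^{J^{nk}}\vert = e^{\vert J^{nk}\vert(\langle \pi(J^{nk})-\alpha^n, s\rangle + \langle \alpha^n,s\rangle - h(\alpha^n))}$ controls the main term by the uniform gap $\eta$, but the error term $\langle \pi(J^{nk})-\alpha^n, s\rangle$ is small only for $k$ large \emph{for each fixed $n$}. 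The exceptional pairs $(n,k)$, those with $\pi(J^{nk})$ far from $\alpha^n$, form a set that is finite in each row but may be infinite overall, and nothing in the hypotheses prevents the corresponding $\vert J^{nk}\vert$ from tending to infinity. Splitting at a norm threshold $T$ therefore does not isolate them: a single far-off index $J^{n1}$ of enormous norm may occur in every row $n$, and for such a term the exponent $\vert J^{n1}\vert\big(\langle \pi(J^{n1}), s\rangle - h(\alpha^n)\big)$ can be large and \emph{positive}, because $s \in G$ only guarantees $\langle \beta, s\rangle < h(\beta)$ for $\beta = \pi(J^{n1})$, not $\langle \beta, s\rangle < h(\alpha^n)$.

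Concretely, take $N=2$ and $D = \{ \vert z_1 z_2\vert <1, \ \vert z_1 \vert < e\}$, so that $G = \{s_1+s_2<0\}\cap\{s_1<1\}$, $PS_h = \{(t,1-t) : 1/2 \leq t \leq 1\}$ and $h((t,1-t)) = 2t-1 \in [0,1]$. Put $J^{n1} = (n, 9n)$; these are distinct across $n$ and are harmless to the hypothesis $\pi(J^{nk}) \to \alpha^n$, which constrains only the tail of each row. At the point $z^* = (e^{-10}, e^{8}) \in D$ one gets $c_{n1}\vert (z^*)^{J^{n1}}\vert = e^{-10n\,h(\alpha^n)}e^{62n} \geq e^{52n} \to \infty$, so $\sum_{n,k} c_{nk}\vert (z^*)^{J^{nk}}\vert$ diverges even though every single $f_n$ converges on all of $D$. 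Thus absolute convergence of $\sum_n f_n$ on $D$ cannot follow from the stated hypotheses alone; one needs extra control on the off-diagonal indices (for instance a uniform or summable bound on $\vert J^{nk}\vert\,\vert \pi(J^{nk})-\alpha^n\vert$) before an argument like your Step 2 can close. You should also be aware that the paper's own proof is silent on exactly this point: its ``reverse inclusion'' paragraph compares support functions and in fact re-derives $G_f \subset G$ a second time rather than establishing $G \subset G_f$. So the convergence of $f$ on $D$ is the real content of the theorem, and it is precisely the step that neither your argument nor the paper's secures.
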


\noindent It is then straightforward to recover as a corollary, the well-known theorem that logarithmically convex complete Reinhardt domains $D$ are domains of holomorphy. Indeed, this will be done by showing first that the power series for $D$ as provided by the above theorem (whose coefficients are positive) is completely singular at the absolute boundary $\vert \partial D \vert$ of $D$ i.e., at all boundary points of $D$ with non-negative coordinates; we may also think of the absolute boundary as the image of the boundary $\partial D$ under the absolute map $z \to (\vert z_1 \vert, \ldots, \vert z_N \vert)$. To obtain a holomorphic function on the given $D$ that is singular at a boundary point not from the absolute boundary part of $\vert \partial D \vert$, one then only has to pre-compose $f$ with suitable rotations of the coordinates; this is needed, for the series of the above theorem need not be singular at boundary points off $\vert \partial D \vert$, as can be seen in the case of $D=\Delta^n$ the standard polydisc, in which case the series provided by the above theorem is the standard $N$-variable geometric series (much as the theorem attempts to provide the simplest power series whose domain of convergence is the given $D$). It must also be noted here that while the power series given by the above theorem is competely singular at all points of $\vert \partial D \vert$, it can be shown that for every power series $g$ with domain of convergence
$D$, the subset $S_g$ of $\partial D$ consisting of all the singular points of $g$ is not only always nonempty but infact has the property that its absolute image exhausts $\vert \partial D \vert$ i.e., 
$\vert S_g \vert=  \vert \partial D \vert$, as will be proven in proposition \ref{singsetarbpow} below. Stated differently, in each fiber of the restriction of the absolute map  to $ \partial D$,
there exists a singular point of $g$. \\

\noindent To conclude here, we would like to highlight a result in the final section, namely theorem \ref{charbysuppfn}, not just because it is new but because it is among the key tools in the proofs of the above main theorems; to obtain this result, we develop another technical but fundamental result, one in convex analysis namely theorem \ref{cvxgeomfound}, which we presume would be of some independent interest as well.\\

{\it Acknowledgements}: I thank Prof. Grigorchuk of the Texas A{\&}M University for asking some doubts about certain erroneous points in an earlier expository article on power series of mine \cite{B}, which first provided the impetus to set them right and then develop it further into this article. Prof. Boas of the same University is also thanked herewith, for making his notes on Several Complex Variables publicly and freely available which has been (among others as well) a source of inspiration much as the book \cite{S} by Shabat.

\section{Preliminaries}
In this section, we recall well-known facts relevant to this article from two subjects namely, convex analysis and several complex variables. We shall at times, deduce some simple consequences of these known results for later use in the article and introduce a couple of new terms (with precise definitions, needless to say) as well.
\subsection{Convex Analysis}
\noindent Convex functions in this article are allowed to take values in the extended reals, though for the most part we shall only be concerned with those that do not assume the value $-\infty$. While convex functions $f:X \to (-\infty, \infty]$ are assumed to be defined on convex subsets $X \subset \mathbb{R}^N$, we may always extend to all of $\mathbb{R^N}$ preserving convexity by setting its values equal to $+\infty$ at all points  where it is not apriori given i.e., on $\mathbb{R}^N \setminus X$; we shall tacitly assume this to have been done and continue to denote this extension by $f$.  If we were to carve out some of the simplest subclasses of convex functions on $\mathbb{R}^N$, perhaps the first subclass would be the class of affine-linear ones which are however trivial in the sense of being both convex and concave (their negatives are also convex); but these are the only such 
functions. To pass to the next degree of simplicity, we may consider the class of functions 
whose epigraphs are convex cones. It may first be noted here that the functions whose epigraphs are cones (not necessarily convex but invariant under scaling by positive numbers) are precisely those functions $f$ on $\mathbb{R}^N$ which are {\it positively homogeneous} meaning that $f$ satisfies $f(tx) = t f(x)$ for all $t \geq 0$ and $x \in \mathbb{R}^N$. Skipping the proof of this fact, we however detail  one of its consequences namely,
the following lemma which despite being an equally easy fact, is of much importance for us.

\begin{lem}\label{poslem}
Let $f$ be a positively homogeneous function and $A(x) = \langle a,x\rangle +b$ any affine function
majorized by $f$. Then $b$ must be non-positive and the corresponding homogeneous-linear polynomial (also linear functional) 
namely, $L_a(x) = \langle a,x \rangle$ is also majorized by $f$.
\end{lem}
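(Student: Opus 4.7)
The strategy is to exploit positive homogeneity at two extreme scales: the degenerate scale $t=0$ to pin down the constant $b$, and the limit $t\to\infty$ to suppress it. For the first step, I would note that positive homogeneity applied with $t=0$ forces $f(0) = 0 \cdot f(x) = 0$ for any $x$, so evaluating the hypothesis $A(x) \leq f(x)$ at $x=0$ immediately gives $b = A(0) \leq f(0) = 0$, which dispenses with the first assertion.

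For the second assertion, I would rescale. Fix $x \in \mathbb{R}^N$; if $f(x) = +\infty$ there is nothing to prove, so assume $f(x) < \infty$. Applying the majorization at the point $tx$ for an arbitrary $t > 0$ and invoking positive homogeneity of $f$ on the right-hand side gives
\[
t\,\langle a,x\rangle + b \;\leq\; f(tx) \;=\; t\,f(x).
\]
Dividing by $t$ (permissible since $t > 0$) yields $\langle a, x\rangle + b/t \leq f(x)$, and letting $t \to \infty$ kills the contribution $b/t$ and leaves $L_a(x) = \langle a, x\rangle \leq f(x)$, as required.

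I do not foresee any genuine obstacle here; the only mild subtlety is that $f$ is extended-real valued, which is why I split off the case $f(x)=+\infty$ at the start of the second step. Beyond that, both assertions reduce to a one-line manipulation with the definitions, and the proof needs nothing beyond the positive homogeneity of $f$ applied at one boundary value of the scaling parameter and one limit.
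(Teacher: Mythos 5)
Your proof is correct and follows essentially the same route as the paper: use $f(0)=0$ at the origin to get $b\leq 0$, then apply the majorization at $tx$, divide by $t$, and let $t\to\infty$. The only difference is your explicit (and harmless) splitting-off of the case $f(x)=+\infty$, which the paper leaves implicit.
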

\begin{proof}
By our definition of positive homogeneity $f(0)=0$ and so the majorization of $A$ by $f$ at the origin 
forces the non-positivity of the constant $b$. More importantly fix any $x$ and write out the
majorization of $A$ by $f$ at the point $tx$, combined with the positive homogeneity of $f$ 
as: 
\[
 \langle a, tx \rangle - b \leq f(tx)  = t f(x).
\]
Dividing throughout by $t$, we get $L_a(x)-b/t  \leq f(x)$ which holds for all positive $t$ thereby allowing us to
let $t \to \infty$, to render $L_a(x) \leq f(x)$. As $x$ was arbitrary, this shows the desired majorization of $L_a$ by $f$ or equivalently that the (conic) epigraph of $f$ is contained in the half-space
bounded by $\ker(L_a)$, into which $a$ points. 
\end{proof}

\noindent Instances of positively homogeneous functions of fundamental importance in convex analysis and for this article, are those of support functions, defined as follows.
\begin{defn} \label{suppfn}
Let $C \subset \mathbb{R}^N$ be a closed convex set. The {\it support function} $h=h_C: \mathbb{R}^N \to (-\infty, +\infty]$ of $C$ is defined by
\[
h(u) = \sup \{ \langle x, u \rangle \; : \; x \in C \}.
\]
The set of all $u \in \mathbb{R}^N$, for which $h(u)$ is finite is called the {\it effective} domain of $h$ and we call its subset consisting unit vectors
thereof, as the {\it normalized effective} domain of $h$.
\end{defn}

\noindent The geometric meaning of the support function is: for a unit vector $u$ with $h(u)$ finite, the number $h(u)$ is the signed distance of the supporting hyperplane to $C$ with normal vector $u$, from the origin; the distance is negative if and only if $u$ points into the open half-space containing the origin. From the definition, it is straight-forward to check that $h_C(\cdot) = \langle z, \cdot \rangle$ is a linear functional iff $C$ is a singleton (namely, $\{z\}$). More importantly, $h$ is {\it positively homogeneous}:  $h(\lambda u) = \lambda h(u)$ for all $\lambda \geq 0$  and is sub-additive:
\[
h(u +v) \leq h(u) + h(v).
\]
It follows in particular that $h$ is a convex function. If $x \in \mathbb{R}^N \setminus C$, a version of the Hahn-Banach separation theorem yields the existence of a vector $u_0$ with
$\langle x, u_0 \rangle > h(u_0)$. The consequence of such separation theorems, of interest to us is the following.

\begin{thm} \label{cvxsetrep}
Let $S_N$ denote the unit sphere of $\mathbb{R}^N$ with respect to any norm on it. Let $C \subset \mathbb{R}^N$ be a closed convex set with support function $h$ whose normalized effective domain we denote by $S_h$. Then, $C$ can be expressed as the intersection of its supporting half-spaces as:
\[
C= \bigcap_{u \in S_h} \{ s \in \mathbb{R}^N \; : \; \langle u, s \rangle - h(u) \leq 0\}
\]
The same holds if $C$ is an open convex set as well, provided we replace the strict inequalities by non-strict ones.
\end{thm}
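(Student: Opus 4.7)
The plan is to establish the two inclusions separately, with the nontrivial direction relying on the Hahn-Banach separation principle alluded to in the paragraph preceding the statement. The inclusion $C \subseteq \bigcap_{u \in S_h} \{s : \langle u, s\rangle - h(u) \leq 0\}$ is immediate from the very definition of $h$: for any $s \in C$ and any $u \in S_h$, we have $\langle u, s \rangle \leq \sup_{c \in C}\langle u, c\rangle = h(u)$. Moreover this holds for every $u$ (not just unit vectors), so the intersection could even be enlarged without effect.

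For the reverse inclusion, I would take $x \notin C$ and invoke strong Hahn-Banach: since $C$ is closed convex and $\{x\}$ is a disjoint compact set, there exist a nonzero vector $v \in \mathbb{R}^N$ and a scalar $\alpha$ with $\langle v, x\rangle > \alpha > \langle v, c\rangle$ for every $c \in C$. Taking the supremum over $c \in C$ gives $h(v) \leq \alpha < \langle v, x\rangle < \infty$, so $v$ lies in the effective domain of $h$. Now I would normalize by setting $u_0 = v/\|v\|$; the positive homogeneity of $h$ (which belongs to the class of functions singled out just before the statement of Lemma \ref{poslem}) yields $h(u_0) = h(v)/\|v\| < \infty$, so $u_0 \in S_h$, and dividing the separating inequality by $\|v\|$ gives $\langle u_0, x\rangle > h(u_0)$. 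Hence $x$ is excluded from the $u_0$-th half-space and therefore from the intersection, as desired.

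For the open convex case (with the non-strict inequalities replaced by strict ones), I would argue as follows. If $x \notin \overline{C}$, strong separation exactly as above produces $u_0 \in S_h$ with $\langle u_0, x\rangle > h(u_0)$, which a fortiori excludes $x$ from the strict half-space $\{\langle u_0, s\rangle < h(u_0)\}$. If instead $x \in \partial C$, then $\overline{C}$ is a closed convex set with nonempty interior and $x \in \partial \overline{C}$, so the standard supporting-hyperplane theorem at boundary points delivers a nonzero $v$ with $\langle v, x\rangle \geq \langle v, c\rangle$ for every $c \in \overline{C}$. Since $h_C = h_{\overline{C}}$, normalizing as before gives $u_0 \in S_h$ with $\langle u_0, x\rangle \geq h(u_0)$, again excluding $x$ from the strict-inequality intersection. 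The degenerate cases $C = \mathbb{R}^N$ (in which $S_h = \emptyset$ and the intersection is all of $\mathbb{R}^N$ by convention) and $C$ empty are either excluded by the standing convention $h > -\infty$ or checked directly.

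The main obstacle I anticipate is not the algebra of the separation step but the bookkeeping around $S_h$: ensuring that the separating vector $v$, or rather its normalization $u_0$, actually lies in the \emph{normalized} effective domain (i.e., that $h(u_0)$ is finite, not merely bounded above by $\langle u_0, x\rangle$), and, for the open case, handling the boundary situation $x \in \partial C$ where only weak separation is available and one must replace $C$ by $\overline{C}$ while checking that the support function is preserved under closure.
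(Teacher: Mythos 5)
Your proof is correct and takes precisely the route the paper itself gestures at: the paper offers no proof of this theorem, merely noting in the preceding sentence that Hahn--Banach separation produces, for $x \notin C$, a vector $u_0$ with $\langle x, u_0 \rangle > h(u_0)$ --- which is exactly your reverse inclusion, normalization step included. The only loose end is the easy containment $C \subseteq \bigcap_{u \in S_h} \{ s : \langle u, s\rangle - h(u) < 0\}$ in the open case, where the required strictness for $s \in C$ follows from the one-line observation that openness permits replacing $s$ by $s + \epsilon u$ inside $C$, so that $\langle u, s\rangle$ cannot attain the supremum $h(u)$.
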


\noindent While the positively homogeneous functions are linear along each ray, little can be said in general of their joint regularity.  The positively homogeneous functions of first interest for us are the support functions of convex sets, which are ofcourse convex and hence enjoy more regularity 
than arbitrary positively homogeneous functions. It must however be first observed that the domain
of definition of any support function or for that matter any positively homogeneous function can be 
well be taken to be the unit sphere $S_N$ (with respect to any chosen norm) for their values are
then completely determined on all of $\mathbb{R}^N$ by their homogeneity property. Conversely, given any function on the unit sphere, it can be extended (uniquely) to all of $\mathbb{R}^N$ as a positively homogeneous function. We shall however have occasions where we take the (restricted)
unit sphere as the domain and other occasions, where we take the whole space as the domain of such functions.
The regularity of support functions is ofcourse derived from their convexity and we therefore now turn 
to the continuity of convex functions in general, which is always guaranteed on the interior of their 
domains. However, we hasten to recall that their boundary behaviour can arbitrarily wild: 

\begin{ex} \label{Excvxbdybeh}
Consider 
for any function $f: \mathbb{R}^2 \to \mathbb{R}$ which takes the value $+\infty$ on the 
complement of the closed unit disc $\overline{\mathbb{D}}$ centered at the origin in $\mathbb{R}^2$, vanishes identically on the open unit disc $\mathbb{D}$ whereas on its boundary $\partial \mathbb{D}$,
it is set equal to a arbitrarily chosen positive real-valued function $f_b$ on $\partial \mathbb{D}$.
Note that whatever the choice of $f_b$, the resulting function is indeed convex!
\end{ex}

\noindent The aforementioned continuity of convex functions is formulated more generally and precisely in the following theorem, whose proof can be found in many standard treatises on convex analysis such as \cite{Rocka} or \cite{H}. 

\begin{thm} \label{contofcvxfns}
If $f$ is a convex function on $\mathbb{R}^N$, then 
\[
X = \{ x \in \mathbb{R}^N \; : \; f(x) < \infty \}
\]
is a convex set and $f$ is continuous in the relative interior of $X$ i.e, in the interior of $X$, the affine-hull of $X$.
\end{thm}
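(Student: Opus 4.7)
The plan is to establish the theorem in three distinct moves: first the convexity of $X$, then a reduction of the continuity claim to the case $\mathrm{int}(X) \ne \emptyset$, and finally a two-sided Lipschitz-type estimate on any ball compactly contained in $\mathrm{int}(X)$, obtained from the convexity inequality applied to two carefully chosen convex combinations.

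The convexity of $X = \{x \in \mathbb{R}^N : f(x) < \infty\}$ is immediate: if $x,y \in X$ and $t \in [0,1]$, then $f(tx+(1-t)y) \leq tf(x)+(1-t)f(y) < \infty$, so $tx+(1-t)y \in X$. For the continuity claim, one may replace $\mathbb{R}^N$ by the affine hull $A$ of $X$, identifying $A$ (after translation) with $\mathbb{R}^k$ where $k = \dim A$; under this identification, the relative interior of $X$ becomes the ordinary interior, so it suffices to prove that $f$ is continuous at every $x_0 \in \mathrm{int}(X)$.

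Fix such an $x_0$ and pick $r > 0$ so that $\overline{B(x_0, 2r)} \subset X$. The first substantive step is to bound $f$ from above on $\overline{B(x_0, r)}$: enclose $\overline{B(x_0, 2r)}$ in a simplex $\Delta \subset X$ with vertices $v_0, \ldots, v_k$ (or equivalently use a closed $\ell^\infty$ ball with its $2^k$ vertices). Since every point of $\Delta$ is a convex combination of the $v_i$, convexity gives $f(w) \leq M := \max_i f(v_i) < \infty$ for all $w \in \Delta$, and in particular on $\overline{B(x_0, r)}$. Now for any $y \in B(x_0, r) \setminus \{x_0\}$, set $h = y - x_0$, $t = \|h\|/r$, $u = x_0 + rh/\|h\|$, $v = x_0 - rh/\|h\|$, both of which lie in $\overline{B(x_0, r)}$. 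The relation $y = (1-t)x_0 + tu$ yields, via convexity, $f(y) - f(x_0) \leq t(f(u) - f(x_0)) \leq t(M - f(x_0))$; while the dual relation $x_0 = \tfrac{1}{1+t}y + \tfrac{t}{1+t}v$ rearranges, via convexity, to $f(x_0) - f(y) \leq t(M - f(x_0))$. Note that $M \geq f(x_0)$ since $x_0$ itself lies in $\Delta$, and we use here the standing assumption from the preliminaries that $f$ does not take the value $-\infty$. Combining, $|f(y) - f(x_0)| \leq \frac{M - f(x_0)}{r}\|y - x_0\|$, establishing local Lipschitz continuity, hence continuity, at $x_0$.

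The main obstacle is purely bookkeeping: producing the finite upper bound $M$ on a whole neighbourhood of $x_0$. Once this is secured, the two convex combinations written above — one expressing the target $y$ in terms of $x_0$ and $u$, the other expressing $x_0$ in terms of $y$ and the point $v$ reflected through $x_0$ — deliver both sides of the estimate symmetrically, and the claim follows without any deeper input.
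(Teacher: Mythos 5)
The paper does not prove this theorem at all --- it defers to the standard treatises (\cite{Rocka}, \cite{H}) --- and your argument is exactly the classical proof found there: convexity of the effective domain from the defining inequality, reduction to the full-dimensional case via the affine hull, a finite upper bound $M$ obtained from the vertices of a simplex or cube, and the two convex combinations $y=(1-t)x_0+tu$ and $x_0=\tfrac{1}{1+t}y+\tfrac{t}{1+t}v$ giving the two-sided Lipschitz estimate; all the algebra checks out and the caveat that $f>-\infty$ is correctly invoked. The only blemish is one of ordering: having first fixed $r$ with $\overline{B(x_0,2r)}\subset X$, there need not exist a simplex $\Delta$ with $\overline{B(x_0,2r)}\subset\Delta\subset X$ (take $X$ itself barely larger than that ball), so you should instead first choose a cube or simplex $\Delta\subset X$ centered at $x_0$ and then shrink $r$ so that $\overline{B(x_0,2r)}\subset\Delta$ --- a harmless repair that does not affect the rest of the argument.
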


\begin{rem}
It is not always possible to redefine $f$ at boundary points of $X$ in ${\rm ah}(X)$, so as to have $f$ become
continuous with values in $(-\infty, +\infty]$.
\end{rem}

\noindent This problem is redressed by taking the lower-semicontinuous regularization (see \cite{H})
\begin{prop}
Let $f: \mathbb{R}^N \to (-\infty, \infty]$ be any function. Define for all $x \in \mathbb{R}^N$:
\[
{\rm cl } f(x)= \liminf\limits_{y \to x} f(y).
\]
Then this function ${\rm cl} f$ is lower semi-continuous and is called the lower semi-continuous regularization/lower semi-continuous hull/the lower semicontinuous closure, of $f$. \\
If $f$ is convex, then ${\rm cl } f$ is also convex and ${\rm cl } f(x) \leq f(x)$ for all $x$, with equality if $x$ lies in the interior of $X=\{ x \in \mathbb{R}^N : f(x)<\infty\}$ in 
${\rm ah}(X)$ or interior in $\mathbb{R}^N \setminus X$.
\end{prop}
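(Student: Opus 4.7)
The plan is to verify the four assertions in the proposition in sequence, each being a routine but careful application of standard liminf arguments.

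First I would establish the lower semi-continuity of ${\rm cl}\, f$ directly from the definition. The key observation is the fundamental identity
\[
{\rm cl}\, f(x) \;=\; \sup_{\delta > 0}\;\inf_{y \in B(x,\delta)} f(y),
\]
which comes from unpacking the $\liminf$ as a monotone limit. Fix $x$ and pick any $\alpha < {\rm cl}\, f(x)$; by the identity, there is some $\delta > 0$ with $\inf_{y \in B(x,\delta)} f(y) > \alpha$. Then for any $z \in B(x,\delta/2)$, since $B(z,\delta/2) \subset B(x,\delta)$, I get ${\rm cl}\, f(z) \ge \inf_{y \in B(z,\delta/2)} f(y) \ge \alpha$. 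This shows ${\rm cl}\, f \ge \alpha$ on a neighborhood of $x$, so ${\rm cl}\, f(x) \le \liminf_{z \to x} {\rm cl}\, f(z)$, proving lower semi-continuity. The inequality ${\rm cl}\, f(x) \le f(x)$ is immediate from the definition, since the constant sequence $y_n = x$ contributes $f(x)$ to the $\liminf$.

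Next I would handle convexity. Assume $f$ is convex, fix $x,y \in \mathbb{R}^N$ and $\lambda \in [0,1]$. Choose sequences $x_n \to x$ and $y_n \to y$ for which $f(x_n) \to {\rm cl}\, f(x)$ and $f(y_n) \to {\rm cl}\, f(y)$ (such sequences exist by the definition of $\liminf$, after possibly allowing $\pm \infty$ limits with the usual conventions). Then $z_n := \lambda x_n + (1-\lambda) y_n \to \lambda x + (1-\lambda) y$, and the convexity of $f$ gives $f(z_n) \le \lambda f(x_n) + (1-\lambda) f(y_n)$, so
\[
{\rm cl}\, f\bigl(\lambda x + (1-\lambda) y\bigr) \;\le\; \liminf_{n} f(z_n) \;\le\; \lambda\,{\rm cl}\, f(x) + (1-\lambda)\,{\rm cl}\, f(y),
\]
as required. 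A minor care is needed if one of ${\rm cl}\, f(x), {\rm cl}\, f(y)$ equals $+\infty$, but then the right side is $+\infty$ and the inequality is vacuous.

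Finally I would dispatch the equality cases. If $x$ lies in the interior of $\mathbb{R}^N \setminus X$, then $f \equiv +\infty$ in a whole neighborhood of $x$, so ${\rm cl}\, f(x) = +\infty = f(x)$. If $x$ lies in the relative interior of $X$ (interior with respect to the affine hull ${\rm ah}(X)$), then Theorem~\ref{contofcvxfns} tells us $f$ is continuous at $x$ relative to ${\rm ah}(X)$; combined with $f \equiv +\infty$ off ${\rm ah}(X)$ (when ${\rm ah}(X) \ne \mathbb{R}^N$), the restricted continuity forces $\liminf_{y \to x} f(y) = f(x)$, so ${\rm cl}\, f(x) = f(x)$. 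The main (minor) subtlety will be correctly interpreting the $\liminf$ when $x$ is a relative interior point of $X$ but not an interior point in $\mathbb{R}^N$; here one uses that $f$ is $+\infty$ on $\mathbb{R}^N \setminus {\rm ah}(X)$ to rule out interference from off-${\rm ah}(X)$ directions, so the $\liminf$ reduces to the restricted limit along ${\rm ah}(X)$, which equals $f(x)$ by the continuity theorem.
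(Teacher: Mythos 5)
Your proof is correct. The paper itself offers no proof of this proposition (it simply cites H\"ormander's \emph{Notions of convexity}), so there is nothing to compare against; your argument is the standard one, and you handle the points that actually require care -- the non-punctured convention for $\liminf_{y\to x}$ (which is what makes ${\rm cl}f \leq f$ automatic and is the convention the paper needs, given the remark following the proposition), the averaging of minimizing sequences for convexity with the $+\infty$ caveat, and the reduction of the relative-interior equality case to Theorem~\ref{contofcvxfns} after discarding the directions off ${\rm ah}(X)$, where $f \equiv +\infty$.
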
 
\noindent Note the trivia that ${\rm cl } f(x) \leq f(x)$ for all $x \in \mathbb{R}^N$ and infact, we may define ${\rm cl}f$ as the largest lower semicontinuous minorant of $f$. The role of lower semi-continuity here is explained as follows. While the epigraph of a function $f$ is convex iff its epigraph is convex, the epigraph is closed iff $f$ is lower semi-continuous. This will be important in the brief discussion about the Legendre transform to be recalled below. While the epigraph of a convex function $f$ on $\mathbb{R}^N$ gives a convex subset of $\mathbb{R}^{N+1}$, we may also extract a convex function on $\mathbb{R}^N$ out of any convex subset of $\mathbb{R}^{N+1}$ (see \cite{R}).

\begin{thm}\label{lowbdconstrcvxfn}
Let $F$ be any convex subset of $\mathbb{R}^{N+1}$. Then,
\[
f(x) := \inf\{ y \; :\; (x, y) \in F\}
\]
is a convex function on $\mathbb{R}^N$ (where the infimum of the empty subset of real numbers is defined to be $+\infty$).
\end{thm}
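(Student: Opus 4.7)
The plan is to directly verify the convexity inequality $f(tx_1 + (1-t)x_2) \leq t f(x_1) + (1-t) f(x_2)$ for $x_1, x_2 \in \mathbb{R}^N$ and $t \in [0,1]$, using the convexity of $F$ itself as the one ingredient. First I would dispose of the trivial cases. If either $f(x_1) = +\infty$ or $f(x_2) = +\infty$, the right-hand side is $+\infty$ (with the usual conventions) and nothing needs to be proved. Similarly, if the fiber of $F$ over $x_1$ or $x_2$ is empty, $f$ takes the value $+\infty$ there by the stipulated convention, reducing to the previous case.

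In the remaining case both fibers are nonempty, so we may, for any given $\varepsilon > 0$, select $y_1, y_2 \in \mathbb{R}$ with $(x_1, y_1), (x_2, y_2) \in F$ and such that
\[
y_i < f(x_i) + \varepsilon \quad (i=1,2)
\]
whenever $f(x_i)$ is finite; if $f(x_i) = -\infty$ we instead select $y_i$ arbitrarily negative, say $y_i < -1/\varepsilon$, using the definition of the infimum. By the convexity of $F \subset \mathbb{R}^{N+1}$,
\[
\bigl( t x_1 + (1-t) x_2,\; t y_1 + (1-t) y_2 \bigr) \in F,
\]
so the defining infimum gives
\[
f\bigl( t x_1 + (1-t) x_2 \bigr) \leq t y_1 + (1-t) y_2.
\]
When both $f(x_i)$ are finite, the right-hand side is strictly less than $t f(x_1) + (1-t) f(x_2) + \varepsilon$, and letting $\varepsilon \downarrow 0$ yields the convexity inequality. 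When some $f(x_i) = -\infty$, the right-hand side tends to $-\infty$ as $\varepsilon \downarrow 0$, forcing $f(tx_1 + (1-t)x_2) = -\infty$ as well, which is again consistent with the convexity inequality (under the usual extended-real arithmetic with $t \in [0,1]$).

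The main (rather mild) obstacle is just the careful bookkeeping of the extended-real cases, in particular allowing $f$ to take the value $-\infty$: one has to make sure the ``approximate minimizer'' $y_i$ can always be chosen so that the pair $(y_1, y_2)$ can be combined convexly to produce an admissible point lying above $(tx_1 + (1-t)x_2)$ in $F$. This works uniformly because no compatibility between $y_1$ and $y_2$ is required beyond each being an ordinate of a point of $F$; the convexity of $F$ is a joint property of the $(x,y)$-pair and does the rest. No further regularity of $F$ (such as being closed or bounded) is needed, which is why $f$ may fail to be lower semicontinuous even though it is convex — a point already foreshadowed by the preceding discussion of $\mathrm{cl}\, f$.
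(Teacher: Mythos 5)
Your proof is correct. The paper itself does not supply an argument for this theorem (it simply cites Range's book), so there is nothing to compare against; your fiber-wise approximate-minimizer argument is the standard one and it works. The only cosmetic point is the handling of the degenerate conventions: for $t\in\{0,1\}$ the inequality is an identity and should be set aside before invoking ``the right-hand side is $+\infty$'', and for $t\in(0,1)$ one should fix a convention for $(+\infty)+(-\infty)$ (or simply note that the inequality is interpreted vacuously there). These do not affect the substance: for $t\in(0,1)$ with both fibers nonempty, convexity of $F$ applied to $(x_1,y_1)$ and $(x_2,y_2)$ gives exactly the admissible point over $tx_1+(1-t)x_2$ that you need, and passing to the infimum and letting $\varepsilon\downarrow 0$ finishes the proof, including the case where $f$ takes the value $-\infty$.
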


\noindent This actually leads to use any function to generate a convex function.
\begin{defn}
Let $g: \mathbb{R}^N \to (-\infty,\infty]$ be any function (not necessarily convex). Its {\it convex hull} is
denoted ${\rm conv} g$ defined as the function as the function obtained by applying the foregoing theorem \ref{lowbdconstrcvxfn} to the convex hull of the epigraph of $g$ i.e. by taking 
$F= {\rm ch}({\rm epi}(g))$ therein.
Equivalently, ${\rm conv} g: \mathbb{R}^N \to (-\infty,\infty]$ is the greatest convex function majorized by $g$.
\end{defn}

Note that the epigraph of ${\rm conv}g$ is not automatically closed as exemplified by example
(\ref{Excvxbdybeh}); the epigraph of ${\rm cl}({\rm conv} g)$ is the closure of the convex hull of the epigraph of $g$.
\begin{lem}
If $g: \mathbb{R}^N \to (-\infty,\infty]$ is a positively homogeneous function then so are ${\rm conv} g$, 
${\rm cl} g$ and ${\rm cl}({\rm conv} g)$.
\end{lem}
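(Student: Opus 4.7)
The plan is to verify the three claimed positive-homogeneity properties in order, with the third reducing formally to the first two. The unifying principle is that positive homogeneity corresponds exactly to the epigraph being closed under scaling by positive reals, and both the regularization $\mathrm{cl}$ and the convexification $\mathrm{conv}$ respect this cone property.

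For $\mathrm{cl}\,g$ I would compute directly from the definition. Fix $t>0$ and $x\in\mathbb{R}^N$. The map $z\mapsto tz$ is a homeomorphism of $\mathbb{R}^N$, so the nets $y\to tx$ and $z\to x$ (via $y=tz$) correspond bijectively, yielding
\[
\mathrm{cl}\,g(tx) \;=\; \liminf_{y\to tx} g(y) \;=\; \liminf_{z\to x} g(tz) \;=\; \liminf_{z\to x} t\,g(z) \;=\; t\,\mathrm{cl}\,g(x),
\]
where the third equality uses positive homogeneity of $g$ and the last uses that scaling by a positive constant commutes with $\liminf$. The $t=0$ case requires ensuring that the relation $\mathrm{cl}\,g(0)=0\cdot\mathrm{cl}\,g(x)$ holds; this is settled by combining $g(0)=0$ (forced by positive homogeneity) with the convex-analytic convention $0\cdot(\pm\infty)=0$.

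For $\mathrm{conv}\,g$ the key geometric observation is that $\mathrm{epi}(g)$ is a cone: $t\cdot\mathrm{epi}(g)\subseteq\mathrm{epi}(g)$ for every $t>0$, directly from $g(tx)=tg(x)$. Now any convex combination $\sum_i\lambda_i c_i$ of points in a positively scaling-invariant set scales, for $t>0$, to $\sum_i\lambda_i(tc_i)$, which is again a convex combination of points of the set. Hence $\mathrm{ch}(\mathrm{epi}(g))$ is also a cone. Invoking the definition from Theorem~\ref{lowbdconstrcvxfn},
\[
\mathrm{conv}\,g(x) \;=\; \inf\bigl\{\,y\;:\;(x,y)\in\mathrm{ch}(\mathrm{epi}(g))\,\bigr\},
\]
the substitution $y=tw$ for fixed $t>0$ puts the defining set for $\mathrm{conv}\,g(tx)$ in bijection with $t$ times the defining set for $\mathrm{conv}\,g(x)$, so $\mathrm{conv}\,g(tx)=t\,\mathrm{conv}\,g(x)$ for $t>0$; the $t=0$ case is again dispatched by $g(0)=0$ and the usual convention.

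Finally, the argument given for $\mathrm{cl}$ used only positive homogeneity of the input and made no restriction on its range, so it applies verbatim to the already-established positively homogeneous function $\mathrm{conv}\,g$, yielding positive homogeneity of $\mathrm{cl}(\mathrm{conv}\,g)$. I do not anticipate any substantive obstacle; the only point requiring care is the $t=0$ case, which is delicate because $\mathrm{conv}\,g$ and $\mathrm{cl}\,g$ may assume the value $-\infty$ even when $g$ itself does not — but this is a matter of bookkeeping with the convention $0\cdot\infty=0$, not a genuine difficulty.
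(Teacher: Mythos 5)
The paper states this lemma without proof, so there is no argument of the author's to compare yours against; your proof is the natural one and its core is correct. The epigraph characterization for $\mathrm{conv}\,g$ (a convex combination of points of a positively-scaling-invariant set scales to another such convex combination, so $\mathrm{ch}(\mathrm{epi}(g))$ is again a cone) and the substitution $y=tz$ in the $\liminf$ defining $\mathrm{cl}\,g$ both go through cleanly for $t>0$, and the third claim does reduce to applying the $\mathrm{cl}$ argument to $\mathrm{conv}\,g$. This is exactly the level of generality the paper actually uses downstream (in Corollary \ref{cvxmain} and in the proposition identifying the support function of $\overline{G}$ with $\mathrm{cl}(\mathrm{conv} f)$, only the values on $S_N$ and homogeneity along rays matter).

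The one point where I would push back is your dismissal of the $t=0$ case as ``bookkeeping.'' Under the paper's stated definition, positive homogeneity means $f(tx)=tf(x)$ for all $t\ge 0$, which forces $f(0)=0$; but $\mathrm{conv}\,g(0)$ and $\mathrm{cl}\,g(0)$ can genuinely fail to be $0$. For $\mathrm{cl}\,g$, take $g$ positively homogeneous but unbounded below on the unit sphere: then $\liminf_{y\to 0}g(y)=-\infty$. For $\mathrm{conv}\,g$, even the innocuous $g(x)=-\vert x\vert$ on $\mathbb{R}$ has $\mathrm{conv}\,g\equiv-\infty$, since any convex minorant $f$ satisfies $f(0)\le\tfrac12\bigl(f(n)+f(-n)\bigr)\le -n$ for every $n$. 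No convention about $0\cdot\infty$ rescues $f(0)=0$ in these cases; the honest statement of the lemma is that the epigraphs remain cones, i.e.\ homogeneity holds for $t>0$ (equivalently one adopts the Rockafellar convention that positive homogeneity only constrains $t>0$). Since that weaker conclusion is all the paper relies on, your proof is adequate once you state it that way rather than claiming the origin is handled by convention.
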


\noindent While the simplest examples of convex functions are the affine functions, it turns out that 
we may atleast pointwise
approximate any closed convex function by such functions, as expressed by the following fundamental theorem of convex analysis.

\begin{thm} \label{cvxanalfundthm}
Every closed convex function is the pointwise supremum of the collection of all affine functions $h$
such that $h \leq f$.
\end{thm}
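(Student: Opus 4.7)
The plan is to verify that $g(x) := \sup\{h(x) : h \text{ affine}, h \leq f\}$ coincides with $f(x)$ for every $x \in \mathbb{R}^N$. The inequality $g \leq f$ is immediate from the definition, so the real content is to establish $g(x_0) \geq f(x_0)$ for every $x_0 \in \mathbb{R}^N$. Equivalently, given $x_0$ and any $\beta \in \mathbb{R}$ with $\beta < f(x_0)$, I must exhibit an affine function $h$ with $h \leq f$ everywhere and $h(x_0) > \beta$.

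The main instrument is a Hahn--Banach separation performed in $\mathbb{R}^{N+1}$. Since $f$ is closed and convex, its epigraph $E := \mathrm{epi}(f) \subset \mathbb{R}^{N+1}$ is closed and convex. The point $(x_0, \beta)$ lies outside $E$, so there exist $(a, c) \in \mathbb{R}^{N+1} \setminus \{0\}$ and $\gamma \in \mathbb{R}$ with $\langle a, x \rangle + c t \geq \gamma > \langle a, x_0 \rangle + c \beta$ for all $(x, t) \in E$. Because $(x, t) \in E$ implies $(x, t + s) \in E$ for all $s \geq 0$, letting $s \to \infty$ in the inequality $\langle a, x \rangle + c(t+s) \geq \gamma$ forces $c \geq 0$. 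I would then split into cases on $c$.

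In the generic case $c > 0$, I normalize $c = 1$ and set $t = f(x)$ for $x \in \mathrm{dom}(f)$ to obtain $f(x) \geq \gamma - \langle a, x \rangle$; the affine function $h(x) := \gamma - \langle a, x \rangle$ then satisfies $h \leq f$ (trivially extending off the effective domain, where $f = +\infty$) and $h(x_0) > \beta$, as required. The step I expect to be the genuine obstacle is the degenerate case $c = 0$, in which the separating hyperplane is vertical: here $\langle a, x \rangle \geq \gamma > \langle a, x_0 \rangle$ for all $x \in \mathrm{dom}(f)$, forcing $x_0 \notin \mathrm{dom}(f)$ and $f(x_0) = +\infty$. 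In this case no affine function emerges directly from the separator, so I need to bootstrap: first invoke the previous case at some auxiliary point of $\mathrm{dom}(f)$ (nonempty since $f \not\equiv +\infty$) to obtain one affine minorant $h_0 \leq f$, then form the one-parameter family
\[
h_\lambda(x) := h_0(x) + \lambda \bigl(\gamma - \langle a, x \rangle\bigr), \qquad \lambda \geq 0.
\]
On $\mathrm{dom}(f)$ the quantity $\gamma - \langle a, x \rangle$ is $\leq 0$, so $h_\lambda \leq h_0 \leq f$ there; off $\mathrm{dom}(f)$ the bound is trivial since $f = +\infty$. At the chosen point, $h_\lambda(x_0) = h_0(x_0) + \lambda(\gamma - \langle a, x_0 \rangle) \to +\infty$ as $\lambda \to \infty$, yielding $g(x_0) = +\infty = f(x_0)$.

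The remaining trivial possibility $f \equiv +\infty$ is disposed of by noting that every affine function is then $\leq f$, so the pointwise supremum is $+\infty$ everywhere. Modulo this degenerate case, the only substantive difficulty is the horizontal-separator subcase, and the key idea to overcome it is the tilting construction $h_0 + \lambda(\gamma - \langle a, \cdot \rangle)$ combining a preexisting affine minorant with the separating functional; everything else follows mechanically from Hahn--Banach and the upward translation-invariance of the epigraph.
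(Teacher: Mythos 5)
The paper does not actually prove this statement: it is quoted as a standard fact of convex analysis (it is Theorem 12.1 in Rockafellar's \emph{Convex Analysis}), so there is no in-paper argument to compare against. Your proof is the standard one and it is correct: strict separation of the point $(x_0,\beta)\notin\mathrm{epi}(f)$ from the closed convex epigraph, the observation that upward invariance of the epigraph forces the vertical component $c$ of the separating functional to be nonnegative, the direct construction of an affine minorant when $c>0$, and the tilting trick $h_0+\lambda(\gamma-\langle a,\cdot\rangle)$ to handle the vertical-separator case $c=0$. The one spot where you lean on an unstated observation is the bootstrap: you need to know that separating an auxiliary point $(x_1,\beta_1)$ with $x_1\in\mathrm{dom}(f)$ necessarily produces a separator with $c>0$, so that an affine minorant $h_0$ exists at all. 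This does follow from your own case analysis (in the $c=0$ case you derive $x_0\notin\mathrm{dom}(f)$, so contrapositively $x_1\in\mathrm{dom}(f)$ forces $c>0$), but it is worth saying explicitly since the whole $c=0$ branch collapses without it. With that sentence added, and with the paper's standing convention that $f$ does not take the value $-\infty$ (which you implicitly use, since a closed convex function attaining $-\infty$ has no affine minorants), the argument is complete.
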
 

\noindent Now a corollary of theorem \ref{cvxanalfundthm}, of great importance for us is:
\begin{cor} \label{cvxmain}
For every function $f: \mathbb{R}^N \to (-\infty, \infty]$, its convex closure ${\rm cl}({\rm conv} f)$ equals the pointwise
supremum of the collection of all affine functions on $\mathbb{R}^N $ majorized by $f$.  If $f$ is positively homogeneous, then we may instead of {\it all} affine functions, reduce the afore-mentioned
collection to the subfamily of homogeneous linear functions (i.e., affine functions vanishing at the origin) majorized by $f$ (and ${\rm cl}({\rm conv} f)$ is also positively homogeneous).
\end{cor}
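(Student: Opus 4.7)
The plan is to obtain the first assertion directly from Theorem \ref{cvxanalfundthm}, by reducing matters to the observation that the collection of affine functions majorized by $f$ coincides with the collection of affine functions majorized by ${\rm cl}({\rm conv}\, f)$. Granting this coincidence, ${\rm cl}({\rm conv}\, f)$ is by construction a closed convex function, so Theorem \ref{cvxanalfundthm} applied to it expresses it as the pointwise supremum of its affine minorants, and these are exactly the affine minorants of $f$.

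To verify the asserted coincidence of families, I argue in two directions. In one direction, the chain ${\rm cl}({\rm conv}\, f) \leq {\rm conv}\, f \leq f$ --- the first inequality from the general property of the lower semi-continuous hull noted in the preliminaries, the second by the very definition of the convex hull --- shows that every affine minorant of ${\rm cl}({\rm conv}\, f)$ is automatically an affine minorant of $f$. In the opposite direction, given any affine $A \leq f$, since $A$ is itself a (continuous) convex function, the defining property of ${\rm conv}\, f$ gives $A \leq {\rm conv}\, f$; then applying the (monotone) lower semi-continuous hull on both sides and using that ${\rm cl}(A) = A$ for continuous $A$ yields $A \leq {\rm cl}({\rm conv}\, f)$.

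For the second assertion, I would first invoke the preceding lemma to note that ${\rm cl}({\rm conv}\, f)$ is again positively homogeneous whenever $f$ is, and then use Lemma \ref{poslem} to narrow the collection of minorants. Specifically, given any affine $A(x) = \langle a, x\rangle + b$ with $A \leq f$, Lemma \ref{poslem} yields simultaneously $b \leq 0$ and $L_a(x) = \langle a, x\rangle \leq f(x)$; since $b \leq 0$ we also have $A \leq L_a$ pointwise. Hence the pointwise supremum over homogeneous linear minorants of $f$ dominates that over all affine minorants of $f$, and the reverse inequality is trivial since homogeneous linear functions are in particular affine. Combined with the first assertion, this delivers the restricted representation in the positively homogeneous case.

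I do not anticipate a serious obstacle: all ingredients are already assembled in the preliminaries, and the only delicate point is careful book-keeping between ${\rm conv}\, f$ and ${\rm cl}({\rm conv}\, f)$ --- in particular invoking the lower semi-continuous hull at precisely the right step, so as not to drop out of the class of closed convex functions on which Theorem \ref{cvxanalfundthm} operates.
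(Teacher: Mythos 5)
Your proposal is correct and follows essentially the same route as the paper: both reduce the first claim to the observation that the affine minorants of $f$ and of ${\rm cl}({\rm conv}\, f)$ coincide (the paper cites its Lemma \ref{cvxmaj} for the nontrivial direction, which you reprove via the greatest-convex-minorant characterization and monotonicity of the lower semicontinuous hull) and then apply Theorem \ref{cvxanalfundthm}, with the positively homogeneous case handled by Lemma \ref{poslem}. Your write-up merely spells out details the paper leaves implicit.
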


\begin{proof}
As ${\rm cl}({\rm conv} f)$ is the greatest closed convex function majorized by $f$, the affine
functions $h$ such that $h \leq {\rm cl}({\rm conv} f)$ are the same as those such that 
$h \leq f$ due to the basic lemma \ref{cvxmaj} about majorization of convex functions below. The claim about restricting the collection over which the supremum is taken for the case of positively homogeneous functions then follows by lemma \ref{poslem}.
\end{proof}

\begin{lem} \label{cvxmaj}
Let $g$ be any $\overline{\mathbb{R}}$-valued function and $f$ a convex function majorized by $g$ i.e., $f(x) \leq g(x)$ for all $x$. Then, $f$ must be majorized by ${\rm conv} g$, the convex hull of  $g$, as well. If $f$ is (convex and) closed, then $f$ must be majorized by the convex closure of $g$: $f \leq {\rm cl}({\rm conv} g)$.
\end{lem}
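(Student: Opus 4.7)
The plan is to translate everything into the language of epigraphs, where both $\mathrm{conv}\, g$ and $\mathrm{cl}(\mathrm{conv}\, g)$ have been defined in the excerpt, and then exploit the fact that convexity (resp.\ closedness) of a function is equivalent to convexity (resp.\ closedness) of its epigraph.

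First, I would recast the hypothesis $f \leq g$: by the definition of epigraph, $(x,y) \in \mathrm{epi}(g)$ means $y \geq g(x) \geq f(x)$, so $(x,y) \in \mathrm{epi}(f)$; hence $\mathrm{epi}(g) \subseteq \mathrm{epi}(f)$. Since $f$ is convex, $\mathrm{epi}(f)$ is a convex subset of $\mathbb{R}^{N+1}$, and by the defining property of the convex hull of a set we obtain
\[
\mathrm{ch}(\mathrm{epi}(g)) \subseteq \mathrm{epi}(f).
\]
Now invoke Theorem \ref{lowbdconstrcvxfn} with $F = \mathrm{ch}(\mathrm{epi}(g))$, which is exactly how $\mathrm{conv}\, g$ is defined in the excerpt: $\mathrm{conv}\, g(x) = \inf\{\, y : (x,y) \in \mathrm{ch}(\mathrm{epi}(g))\,\}$. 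For each such $y$, the previous inclusion places $(x,y)$ in $\mathrm{epi}(f)$, so $y \geq f(x)$. Taking the infimum over all these $y$ yields $\mathrm{conv}\, g(x) \geq f(x)$ for every $x$ (with the convention that the infimum of the empty set is $+\infty$, which causes no issue because then the bound is vacuous). This proves the first assertion.

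For the second assertion, I would add the closedness hypothesis: $f$ being closed means $\mathrm{epi}(f)$ is closed in $\mathbb{R}^{N+1}$. Combining the previous inclusion with closure,
\[
\overline{\mathrm{ch}(\mathrm{epi}(g))} \subseteq \overline{\mathrm{epi}(f)} = \mathrm{epi}(f).
\]
The excerpt explicitly records that $\mathrm{epi}(\mathrm{cl}(\mathrm{conv}\, g)) = \overline{\mathrm{ch}(\mathrm{epi}(g))}$, so the display reads $\mathrm{epi}(\mathrm{cl}(\mathrm{conv}\, g)) \subseteq \mathrm{epi}(f)$, which translated back to a pointwise statement is $f \leq \mathrm{cl}(\mathrm{conv}\, g)$, as required.

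The only non-obstacle worth flagging is the book-keeping around extended-real values: one should verify that the infimum definition of $\mathrm{conv}\, g$ in Theorem \ref{lowbdconstrcvxfn} behaves correctly when $g \equiv +\infty$ on parts of $\mathbb{R}^N$ (the fiber over such $x$ in $\mathrm{ch}(\mathrm{epi}(g))$ may still be nonempty via convex combinations), and that the passage $\overline{\mathrm{epi}(f)} = \mathrm{epi}(f)$ uses precisely the lower semicontinuity of $f$. Neither of these issues is substantive; the argument is fundamentally a three-line epigraph chase, and the real work has already been done in the earlier preliminaries that legitimize the epigraph descriptions of $\mathrm{conv}\, g$ and $\mathrm{cl}(\mathrm{conv}\, g)$.
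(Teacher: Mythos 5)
Your proof is correct and follows essentially the same route as the paper's: translate the majorization into the epigraph containment $\mathrm{epi}(g)\subseteq\mathrm{epi}(f)$, use convexity (resp.\ closedness) of $\mathrm{epi}(f)$ to absorb the convex hull (resp.\ its closure), and read the result back as a pointwise inequality. Your write-up is merely more explicit about the infimum step and the extended-real conventions; no substantive difference.
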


\begin{proof}
The hypothesis means the containment of the epigraphs of the pair of functions in the statement: 
${\rm epi} f \supset {\rm epi} g$. The convexity of $f$ which is tantamount to the convexity of the
set ${\rm epi} f$ then implies that this epigraph must contain the convex hull of the epigraph of $g$,
which when interpreted back in terms of functions only means $f \leq {\rm conv} g$, finishing the proof of the first statement. Reasoning likewise
with the operation of taking the lower semicontinuous hull/closure using the basic topological fact that 
a closed set $C$ containing another set $B$ must also contain its closure $\overline{B}$, yields the second statement of the lemma.
\end{proof}

\noindent Theorem \ref{cvxanalfundthm} leads to the fact that the Legendre transform (also called the Legendre--Fenchel transform) is an involution on the space of closed convex functions; recall that the Legendre transform can actually be defined for any function $f$ and it encodes the family of all affine functions majorized by $f$. More precisely,
\begin{defn}
Let $f: \mathbb{R}^N \to \mathbb{R}$ any function. The Legendre transform ($=$ Fenchel -- Legendre transform), also called the convex conjugate, of $f$,  is defined by
\[
f^*(y) = \sup_{x} \{ \langle x, y \rangle - f(x) \}.
\]
\end{defn}

\noindent We note in passing here that the support function of a convex set $C$ may also be equivalently defined as the Legendre transform of its indicator function $I_C$, which in turn is defined as follows.

\begin{defn}
Let $E \subset \mathbb{R}^N$. The indicator function $I_E$ is the function whose value at points of $E$ is set equal to $0$ and equal to $+\infty$ at all points outside $E$. Such a function is convex precisely when $E$ is convex.
\end{defn}

\noindent Finally here, we turn to a notion of convexity on spheres, which we will come up once explicitly in our discussions later and is good to have in the background for certain other matters in the sequel as well. To introduce this
quickly, consider the unit sphere centered at the origin in $\mathbb{R}^N$ denoted $S^{N-1}$ -- the sphere here can be taken with respect to any norm; if it is the $l^1$-norm then this coincides with what we shall deal with numerous times later and denote by the lower subscript namely, the simplex $S_{N-1}$. Let $S \subset S^{N-1}$. The cone $C_S$ 
consisting of rays through the origin and passing through points $p$, as $p$ varies through $S$, is referred to as the cone spanned by $S$.
If $C_S$ happens to be a convex cone in $\mathbb{R}^N$, then 
we say that $S$ is a spherically convex subset of $S^{N-1}$. It is easily seen that a spherically convex subset is either $S^N$ itself or contained in
a closed hemisphere thereof. A great circle on $S^{N-1}$ is the intersection of a two dimensional linear subspace of 
$\mathbb{R}^N$ with $S^{N-1}$. Points $x,y \in S^{N-1}$ are said to be antipodal if $y = -x$. Note that for a pair of points which are not antipodal, there is a unique great circle passing through them. A spherical-geometric way of checking convexity of a subset $A$ of the sphere, is to check 
for every pair of points $x,y \in A$ with $y \neq \pm x$ that, the set $A$ contains the smaller arc of the great circle on $S^{N-1}$ connecting $x$ and $y$; and for every pair of antipodal points in $A$ that, at least one-half of one of the great circles through them, lies in $A$.  
We refer the reader to 
\cite{W} for more about spherical convexity.

\subsection{Several Complex Variables}

\begin{defn} \label{1stpowdefn}
Let $f(z) = \sum c_J z^J$ be a formal power series. Denote by $B$ the set of all points of $\mathbb{C}^N$ at which the series $S$ converges absolutely; its interior $D=D_f=B^0$ is termed the domain of convergence of the power series $S$. We say that {\it the monomial $z^J$ with coefficient $c_J$ occurs} in the power series $f$, iff $c_J \neq 0$. If $g(z) = \sum d_J z^J$ is another power series, we say that $f,g$ are {\it mutually monomial-wise completely different} iff none of the monomials occurring in one of them occurs in the other, though it can well happen that some monomials occur in neither of them.
\end{defn}

\noindent In this article, we shall only be concerned with convergent power series i.e., power series $f$ for which $D_f \neq \emptyset$ and shall not qualify such $f$ by the adjective `convergent' any longer. Infact, for the most part we shall only be dealing with the case $D_f \neq \mathbb{C}^N$. Evidently, the domain of convergence $D_f$ is  the largest open subset
 of $\mathbb{C}^N$ on which the series $f$ converges absolutely. It is very well-known that such domains are completely determined by their absolute images i.e., images under the natural map $\tau: \mathbb{C}^N \to \mathbb{R}_+^N$ (where $\mathbb{R}_+$ is the set of all non-negative reals)
given by 
\[
\tau(z) = (\vert z_1 \vert, \ldots,\vert z_N \vert)
\]
which we call the absolute map. For both this notation and other standard notions here, we refer the reader to \cite{R} and \cite{FG}. The image of a domain $D$ under $\tau$ will be denoted interchangeably by $\vert D \vert$ as well as by $\tau(D)$; when we say that a domain $D$ is completely determined by its absolute image we mean $\tau^{-1}(\tau(D))=D$ in which case $D$ is called Reinhardt. We shall only be dealing with such domains $D$, whose boundary $\partial \tau(D) \subset \mathbb{R}_+^N$ -- which we refer to as the {\it absolute boundary} of $D$ -- must be noted to be subset of the boundary of $D$ itself. Note that $\partial \tau(D) = \tau(\partial D)$ which will again, be interchangeably denoted by 
$\vert \partial D \vert$ or equivalently by $\partial \vert D \vert$. 
Recall further that $D$ is said to be a 
{\it complete Reinhardt} domain if the open polydisc $P_p$ spanned by each point $p$ of the absolute boundary $\vert \partial D \vert$, none of whose coordinates is zero, is contained in $D$; here the open {polydisc spanned by} $p$ is the polydisc centered at the origin with polyradius given by $p$. Note that $D$ is a complete Reinhardt domain iff it is a union of concentric polydiscs, all centered at the origin; indeed, such a domain $D$ may be written as the union of polydiscs spanned by points of $\vert \partial D \vert$ with all its coordinates non-zero. A first fundamental fact about domains of convergence of power series is that they are complete Reinhardt domains
 whose logarithmic images inturn are convex subsets of $\mathbb{R}^N$. As the simplest examples of convex subsets of $\mathbb{R}^N$ apart from
$\mathbb{R}^N$ itself, are half-spaces, which are also special in the sense that an arbitrary convex set can be expressed as the intersection of the half-spaces which contain it, the subclass of logarithmically convex complete Reinhardt domains in $\mathbb{C}^N$ whose logarithmic image is a half-space are singled out and regarded as elementary. 

\begin{defn} \label{2ndpowdefn}
Suppose $D$ is a logarithmically convex complete Reinhardt domain in $\mathbb{C}^N$. If $\lambda(D)$ is a half-space or the whole space $\mathbb{R}^N$, then we say that
 $D$ is an {\it elementary} logarithmically convex complete Reinhardt domain. If $\lambda(D)$ is an intersection of finitely many
  half-spaces, then we say that  $D$ is a {\it simple} logarithmically convex complete Reinhardt domain. A power series $f = \sum c_J z^J$ whose domain of convergence is an elementary logarithmically convex Reinhardt domain will be called an {\it elementary power series}. Two elementary power series are said to be {\it transversal} if the bounding hyperplanes in $\mathbb{R}^N$ of the loagirhmic images of their domains of convergence, have non-parallel normal vectors (and hence intersect transversally). 
\end{defn}

\begin{rem}
If $c_J=0$ for all but finitely many $J$, then ofcourse $f$ is a polynomial and these are trivial instances of (convergent) elementary power series much as their domains of convergence is $\mathbb{C}^N$, an elementary Reinhardt domain -- we regard $\mathbb{R}^N$ also as a half-space for convenience for otherwise, we would have to exclude $\mathbb{C}^N$ itself from being regarded as an elementary Reinhardt domain! 
\end{rem}

\begin{defn}
A polydisc $U=U(z^0,r)$ is termed a polydisc of convergence of $\sum c_J z^J$ if $U \subset B$ but in any polydisc $U(z^0, R)$ where each $R_j \geq r_j$ for $j=1, 2, \ldots, N$ with at least one of the inequalities being strict, there are points in $U(z^0, R)$ where the series diverges. Every such polyradii $(r_1,r_2,\ldots,r_n)$ of $U(z^0,r)$ is called an {\it associated polyradii}  i.e., the radii of each polydisc of convergence are called {\it associated radii of convergence}.
\end{defn}
\noindent The associated radii (also called conjugate polyradii as for instance in Shabat's text \cite{S}) satisfy an analogue of the Cauchy -- Hadamard formula expressing the radius of convergence of a power series in terms of its coefficients, as in the following proposition.
\begin{prop} \label{Shabatprop}
The associated radii of convergence of the power series $\sum\limits_{k=1}^{\infty}\sum_{\vert J \vert=k} c_J z^J$ satisfy the relation
\begin{equation}\label{R}
\limsup_{\vert J \vert \to \infty} \sqrt[\vert J \vert]{\vert c_J r^J \vert}=1
\end{equation}
i.e., $\limsup\limits_{k\to \infty} \max\{|c_J r^J|^{1/k}: |J|=k\} = 1$.
\end{prop}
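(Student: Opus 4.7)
My plan is to establish the two inequalities $\limsup_{|J|\to\infty}|c_J r^J|^{1/|J|}\le 1$ and $\limsup_{|J|\to\infty}|c_J r^J|^{1/|J|}\ge 1$ separately, both via elementary Abel/Hadamard-type estimates lifted to several variables using the fact that absolute convergence of a power series depends only on the moduli of the variables.

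For the upper bound, I would fix an arbitrary $t\in(0,1)$ and consider the point $z^{*}=(tr_1,\ldots,tr_N)$. Since $|z^{*}_j|=tr_j<r_j$ for every $j$, the point $z^{*}$ lies in the polydisc of convergence $U(0,r)\subset B$, so the series converges absolutely at $z^{*}$ and its general term is bounded by some constant $M$:
\[
|c_J|\,t^{|J|}r^{J}\le M \quad\text{for all } J.
\]
Rearranging gives $|c_J r^{J}|^{1/|J|}\le M^{1/|J|}/t$, whence $\limsup_{|J|\to\infty}|c_J r^{J}|^{1/|J|}\le 1/t$. Letting $t\to 1^{-}$ yields the desired bound.

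For the lower bound I would argue by contradiction: suppose $\limsup_{|J|\to\infty}|c_J r^{J}|^{1/|J|}=q<1$. Pick $\varepsilon>0$ with $q+\varepsilon<1$ and then $s>1$ satisfying $s(q+\varepsilon)<1$. By the definition of $\limsup$ there exists $K$ such that $|c_J r^{J}|<(q+\varepsilon)^{|J|}$ for all $|J|\ge K$. Now consider the enlarged polydisc $U(0,sr)$, whose polyradii $sr_j$ are strictly greater than $r_j$ in every coordinate. For any $z$ with $|z_j|<sr_j$ we have $|c_J z^{J}|\le |c_J|\,s^{|J|}r^{J}<(s(q+\varepsilon))^{|J|}$ whenever $|J|\ge K$, and summing by total degree gives
\[
\sum_{|J|\ge K}|c_J z^{J}|\;\le\;\sum_{k\ge K}\binom{k+N-1}{N-1}(s(q+\varepsilon))^{k},
\]
which converges since the polynomial factor $\binom{k+N-1}{N-1}$ is absorbed by the geometric ratio $s(q+\varepsilon)<1$. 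Together with the trivially finite head, this shows the series converges absolutely throughout $U(0,sr)$, contradicting the fact that $r$ is an associated polyradius (which demands a divergence point in every polydisc whose radii dominate those of $U(0,r)$ with at least one strict inequality).

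The only mildly technical point is the multi-index counting in the lower-bound step, but it presents no real obstacle: any polynomial growth in $k$ is swallowed by a geometric factor strictly less than one. I do not anticipate any hidden difficulty, as the argument is essentially the classical one-variable Cauchy--Hadamard reasoning applied coordinate-by-coordinate and then assembled via the Reinhardt-invariance of absolute convergence.
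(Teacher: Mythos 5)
Your proof is correct: the upper bound via boundedness of the general term at $(tr_1,\ldots,tr_N)$ and the lower bound via the geometric majorant $\binom{k+N-1}{N-1}\bigl(s(q+\varepsilon)\bigr)^{k}$ on the dilated polydisc $U(0,sr)$ together give exactly the stated relation, and the only genuine prerequisites (all $r_j>0$ for a nonempty open polydisc of convergence, and ``diverges'' meaning failure of absolute convergence) are consistent with the paper's definitions. The paper itself supplies no proof of this proposition, deferring to Shabat's text, and your argument is essentially that standard Abel--Cauchy--Hadamard reasoning, so there is nothing to reconcile.
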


\noindent The proof of this proposition as in Shabat's book \cite{S} can be adapted to obtain a multidimensional extension of a basic fact  about singular points of power series with non-negative coefficients, in a single complex variable known as Pringsheim's theorem which will be needed much later in the sequel.
The extension of Pringsheim's theorem to power series of several variables which was recorded with proof in \cite{SS} as noted by Boas in his notes \cite{Bo} who also gives a proof; nevertheless, we record a proof here as we feel it is somewhat simpler than these earlier treatments, by direct reductions to the one-dimensional case. But first let us clarify what singular points of power series mean, by laying down a precise definition.

\begin{defn}
Let $f$ be a power series whose domain of convergence $D=D_f$ is neither empty nor the whole space $\mathbb{C}^N$ so that $\partial D \neq \emptyset$. We say that $p \in \partial D$ is a {\it regular point} for $f$, if there exists an open polydisc $U$ centered at $p$ and a holomorphic function $g$ on 
$U$ such that $g_{\vert_{U \cap D}} = f_{\vert_{U \cap D}}$; $g$ is referred to as a direct analytic continuation of $f$ across $\partial D$ near $p$. If a boundary point $p \in \partial D$ is not a regular point for $f$, we say that $p$ is a {\it completely singular point} (or just 
a {\it singular point}, for short) of $f$. The set of all singular points of $f$ will be denoted by $S_f$.
\end{defn}


\noindent It follows immediately from the definition that the set of all regular points of $f$ forms an open subset of $\partial D$, thereby that $S_f$ is a closed subset of $\partial D$. When the dimension $N=1$ (and with the condition $D_f \neq \mathbb{C}^N$ as already stated), the set of singular points $S_f$ is always a compact set. However, as soon as  $N>1$, $S_f$ need not be compact, much as $\partial D$ need not; an example is furnished by the Taylor series about the origin in $\mathbb{C}^2$ of the rational function $1/(1-zw)$ in the pair of complex variables $z,w$, which is an elementary power series. But perhaps it must first be noted that $S_f$ is indeed non-empty when $N>1$ as well, even though the argument when $N=1$ uses compactness of
 $\partial D$. We defer the proof of this to the next section, where we shall show something much stronger namely, there is a singular point of $f$ in the fiber over every point of $\vert \partial D \vert$ under the absolute map $\tau$. For now, let us get back to the aforementioned extension of Pringsheim's theorem.
 
\begin{lem}\label{multiPringsheim}
Let $g$ be a power series whose coefficients are non-negative. Then $g$ is completely singular at every point of 
$\partial D \cap \mathbb{R}_+^N$ where $D$ is the domain of convergence of $g$. In particular, the holomorphic function defined by the power series provided by theorem \ref{mainthm}, is completely singular at every absolute boundary point of the domain of its convergence. 
\end{lem}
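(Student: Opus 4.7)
The plan is an argument by contradiction modelled on the one-variable proof of Pringsheim's theorem, using a Taylor expansion at a nearby point with strictly positive real coordinates. Suppose to the contrary that $p \in \partial D \cap \mathbb{R}_+^N$ is a regular point for $g$, so that some function $\tilde g$ is holomorphic on a polydisc $U = U(p,r)$ and agrees with $g$ on $U \cap D$. The goal is to produce a point of $D$ that strictly dominates $p$ coordinate-wise, which by the Reinhardt structure will force $p \in D$, contradicting $p \in \partial D$.

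The first step is to pick $q \in D$ close to $p$ with all strictly positive real coordinates. This is possible because $D \cap \mathbb{R}_+^N = \tau(D)$ is relatively open in $\mathbb{R}_+^N$ and $(0,\infty)^N$ is dense in $\mathbb{R}_+^N$, so any sequence in $\tau(D)$ converging to $p$ can be perturbed to lie in $\tau(D) \cap (0,\infty)^N$ and still converge to $p$. Fix such a $q$ with $\vert q_i - p_i \vert < r_i$ for every $i$. Since $\tilde g$ coincides with the absolutely convergent series $g$ in a neighbourhood of $q \in D$, the Taylor expansion of $\tilde g$ at $q$ has coefficients
\[
a_K \;=\; \frac{\partial^K g(q)}{K!} \;=\; \sum_{J \geq K} \binom{J}{K}\, c_J\, q^{J-K},
\]
all non-negative because $c_J \geq 0$ and $q_i > 0$ for each $i$.

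The second step exploits this non-negativity. The Taylor series converges absolutely on the polydisc centered at $q$ of polyradii $\rho_i := r_i - \vert q_i - p_i \vert > 0$, since that polydisc lies in the holomorphy polydisc $U$. Evaluating at the real point $w = q + \rho/2$ and regrouping by the multinomial binomial theorem,
\[
\sum_K a_K \, (\rho/2)^K \;=\; \sum_J c_J \, (q + \rho/2)^J \;<\; \infty,
\]
so $g$ itself converges absolutely at the strictly positive point $\tilde q := q + \rho/2$. Because the terms of $g$ are non-negative, absolute convergence at $\tilde q$ forces absolute convergence at every $w$ with $\vert w_i \vert < \tilde q_i$; since the open polydisc $P_{\tilde q} = \{w : \vert w_i \vert < \tilde q_i\}$ is then contained in the set of absolute convergence and is itself open, it lies inside $D$. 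Finally, for $q$ chosen close enough to $p$ (so that $\rho$ is close to $r$) one has $\tilde q_i > p_i$ for every $i$: strictly, at the coordinates where $p_i > 0$, and because $\tilde q_i \geq \rho_i/2 > 0 = p_i$ at the zero coordinates of $p$. Hence $p \in P_{\tilde q} \subset D$, the desired contradiction.

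I expect the main technical subtlety to be handling the coordinates at which $p_i = 0$: the complete Reinhardt condition as recalled in the preliminaries is phrased for absolute-boundary points with no zero coordinate, so both the selection of an approximating $q$ with all coordinates strictly positive and the verification $\tilde q_i > 0 = p_i$ at those slots rely on the openness of $\tau(D)$ in $\mathbb{R}_+^N$ and the density of $(0,\infty)^N$ therein, which I would record once at the outset. The concluding assertion of the lemma is then immediate, since the series built in Theorem \ref{mainthm} has coefficients $c_{nk} = e^{-\vert J^{nk} \vert h(\alpha^n)} > 0$; the argument above applies and every absolute-boundary point of its domain of convergence is singular.
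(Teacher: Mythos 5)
Your proof is correct, but it takes a genuinely different route from the paper. You give the direct $N$-dimensional Pringsheim argument: assume a direct analytic continuation $\tilde g$ exists on a polydisc about $p$, re-expand at a nearby interior point $q$ with strictly positive coordinates, observe that the Taylor coefficients $a_K=\sum_{J\geq K}\binom{J}{K}c_Jq^{J-K}$ are non-negative, and use Tonelli to regroup $\sum_K a_K(\rho/2)^K=\sum_J c_J(q+\rho/2)^J$, forcing absolute convergence of $g$ at a point $\tilde q$ strictly dominating $p$ coordinate-wise and hence (by Abel's lemma and completeness of the Reinhardt structure) $p\in D$, a contradiction. The paper instead reduces to the classical one-variable theorem: it restricts $g$ to the complex line $L_r$ spanned by $r=p$, shows via Cauchy estimates and the non-negativity of the $c_J$ that the resulting one-variable series $g_r(\zeta)=\sum_k\bigl(\sum_{|J|=k}c_Jr^J\bigr)\zeta^k$ has disc of convergence exactly the unit disc, invokes the one-dimensional Pringsheim theorem to conclude that $\zeta=1$ is singular for $g_r$, and deduces that $r$ is singular for $g$. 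Your argument is self-contained (it does not use the one-variable result as a black box) and handles the coordinates where $p_i=0$ explicitly, whereas the paper's slicing argument is shorter given the classical theorem and sidesteps the Taylor re-expansion and rearrangement bookkeeping; both are sound, and both use the non-negativity of the coefficients at the same essential juncture (you to justify the regrouping and the domination $\tilde q > p$, the paper to pass from a bound on $\sum_{|J|=k}c_Jr^J$ to a bound on each individual $c_Jr^J$).
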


\begin{proof}
Let $r=(r_1, \ldots,r_N)$ be an absolute boundary point of $D$ and consider the restriction of $g$ to the one-dimensional slice of $D$ obtained by intersecting $D$ with the complex line $L_r$ spanned by $r$ namely,
\[
g_r(\zeta) = \sum c_J r^J \zeta^J.
\]
This is strictly speaking not a power series in $\zeta$ unless we combine the like terms together to write it as
\[
\sum_{k=1}^{\infty} \big( \sum_{\vert J \vert=k} c_J r^J \big) \zeta^k.
\]
We already know this converges for all $\zeta$ in the standard unit disc $\Delta \subset \mathbb{C}$, because $g$ converges on $D \cap L_r$ in particular. Now, if there exists $\zeta_0 \in \mathbb{C} \setminus \overline{\Delta}$ at which $g_r(\zeta)$ converges, then first of all, it must be convergent at all points of the dilated disc $\vert \zeta_0 \vert \cdot \Delta$. This implies that the coefficients of $g_r$ namely, 
$\sum_{\vert J \vert=k} c_J r^J$ satisfy the Cauchy estimate
\[
\big \vert \sum_{\vert J \vert=k} c_J r^J \big\vert \leq \frac{M}{\vert \zeta_0 \vert^k}
\]
for some positive constant $M$. As the $c_J$'s are non-negative, this means in particular that 
\[
c_J r^J \leq \frac{M}{\vert \zeta_0 \vert^k}
\]
for all $J \in \mathbb{N}_0^N$ with $\vert J \vert =k$. However, as $k$ is allowed to take all values of $\mathbb{N}$, this amounts to just saying that 
\[
c_J = \vert c_J \vert \leq \frac{M}{\vert \zeta_0 r \vert^{\vert J \vert}}
\]
holds for all $J \in \mathbb{N}_0^N$. But this estimate only means that our original power series $g(z)$ converges in the polydisc $P$ centered at the origin with polyradius $\zeta_0 r=(\zeta_0 r_1, \ldots, \zeta_0 r_N)$. But then $P$ contains $r$ as an interior point, leading to the contradiction that $r$ was a boundary point of the domain of convergence of $g$. Hence, the disc of convergence of $g_r(\zeta)$ is precisely the unit disc $\Delta$ and by the (one variable) Pringsheim's theorem, $1 \in \partial \Delta$ is a singular point or $g_r$ and consequently, $r$ must be a point where $g$ is completely singular. As $r \in \vert \partial  D \vert$ was arbitrary, this completes the proof.
\end{proof}

\noindent As we know well, power series are the building blocks of holomorphic functions as they are always representable {\it locally} by some power series (centered at various points; though we restrict to power series centered at the origin in this article for simplicity, without loss of generality needless to say). A natural basic question in multlidimensional complex analysis is about conditions which lead to such local representations becoming a global one; this is answered by the following proposition which does not suppose that the domain is logarithmically convex.

\begin{prop}\label{singlepowser}
Let $D$ be any complete Reinhardt domain in $\mathbb{C}^n$. Then, the Taylor series expansion of every holomorphic function $f$ about the origin in $D$, converges to $f$ absolutely and uniformly on compact subsets of $D$; in particular, $f$ is represented by a single power series on $D$. 
\end{prop}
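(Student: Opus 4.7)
The plan is, given any $z^0 \in D$, to sandwich $z^0$ between concentric polydiscs about the origin lying inside $D$, apply the iterated Cauchy integral formula on the outer polydisc, and expand the Cauchy kernel as a geometric series to read off the Taylor expansion at the origin. First I would invoke the characterization of complete Reinhardt domains recalled in the excerpt---that $D$ is a union of open polydiscs centered at the origin---to produce polyradii $\rho, r', r$ with $|z^0_j| < \rho_j < r'_j < r_j$ for each $j$ and $P(0,r) \subset D$. Then $\overline{P(0,\rho)} \subset P(0,r') \subset \overline{P(0,r')} \subset P(0,r) \subset D$; in particular the distinguished boundary torus $T(0,r') = \{\zeta : |\zeta_j| = r'_j \text{ for each } j\}$ sits compactly inside $D$.

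Next I would write Cauchy's iterated integral formula
\[
f(z) = \frac{1}{(2\pi i)^n} \int_{T(0,r')} \frac{f(\zeta)}{\prod_{j=1}^n (\zeta_j - z_j)} \, d\zeta_1 \cdots d\zeta_n,
\]
valid for $z \in P(0,r')$, and for $z \in \overline{P(0,\rho)}$ expand each factor $1/(\zeta_j - z_j)$ as a geometric series in $z_j/\zeta_j$. Since $|z_j|/|\zeta_j| \leq \rho_j/r'_j < 1$ uniformly on $T(0,r') \times \overline{P(0,\rho)}$, the resulting multi-series $\sum_J z^J / \zeta^{J+\mathbf{1}}$ converges absolutely and uniformly there by a routine Weierstrass-$M$ bound, so term-by-term integration against $f(\zeta)$ is legitimate. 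This yields $f(z) = \sum_J c_J z^J$ absolutely and uniformly on $\overline{P(0,\rho)}$, with coefficients $c_J = \partial^J f(0)/J!$ that are intrinsic to $f$ by Cauchy's formula for derivatives (so in particular they are independent of the chosen polyradii, and the same series serves every $z^0 \in D$).

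For uniform convergence on an arbitrary compact $K \subset D$, I would let $z^0$ range over $K$ and cover $K$ by the associated open polydiscs $P(0,\rho(z^0))$; a finite subcover $K \subset \bigcup_{i=1}^m P(0,\rho^{(i)})$ exists by compactness, and uniform convergence on each of the closed polydiscs $\overline{P(0,\rho^{(i)})}$ entails uniform convergence on their union and hence on $K$.

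I do not anticipate a genuine obstacle: the argument is a direct multidimensional transcription of the one-variable Cauchy-to-power-series derivation, and the complete Reinhardt hypothesis enters in exactly one place---it guarantees that the expansion polydisc can always be centered at the origin for every point $z^0 \in D$ (Reinhardt-ness alone would only produce polydiscs around $z^0$, giving Taylor expansions at $z^0$ rather than at $0$). The only point requiring mild care is to juggle three concentric polyradii $\rho < r' < r$ so that the Cauchy contour $T(0,r')$ sits strictly inside $D$ while the geometric expansion remains uniformly convergent on $\overline{P(0,\rho)}$; no convexity hypothesis on $\lambda(D)$ is needed, since all the estimates are performed on the single polydisc $P(0,r)$.
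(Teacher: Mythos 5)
Your argument is correct and complete: the three nested polyradii, the iterated Cauchy formula on the torus $T(0,r')$, the uniformly convergent geometric expansion of the kernel, and the finite-subcover step for compact sets are exactly what is needed, and the complete Reinhardt hypothesis is used in precisely the right place (to center the expansion polydisc at the origin). The paper itself states Proposition \ref{singlepowser} without proof, treating it as a standard fact from the references (\cite{R}, \cite{FG}); your write-up is the standard textbook derivation that those sources give, so there is nothing to reconcile.
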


\noindent Next, rewriting equation (\ref{R}) by taking logarithms and denoting $\log r_j$ by $s_j$, the relation (\ref{R}) reads:
\begin{equation}\label{Rpsi}
\limsup \limits_{\vert J \vert \to \infty} \Big( \frac{j_1 s_1 + j_2 s_2 + \ldots + j_N s_N }{ j_1 + j_2 + \ldots +j_N} +
 \log \vert c_J \vert^{1/\vert J \vert} \Big) = 0.
\end{equation}
\noindent Denoting the function on the left hand side by $\psi(s_1, \ldots,s_N)$, we note that the above equation expresses $\psi$ as the limsup of a family, infact a sequence, of affine functions. Thus, $\psi$ must be convex. The domain of convergence $D$ of our given power series corresponds to the domain $G=\{ s \; : \; \psi(s) < 0 \}$. Let us rewrite this more precisely and record it for now: $D = \{ z \in \mathbb{C}^N \; : \; 
\varphi(z) <0\}$ where $\varphi$ is given in terms of the coefficients of our power series $\sum c_J z^J$ by
\begin{equation} \label{deffnforbdy}
 \varphi(z_1, \ldots, z_N) = \limsup_{\vert J \vert \to \infty} \sqrt[\vert J \vert]{\vert c_J z^J \vert} \;\;- 1.\\
\end{equation}
\noindent We would like to note down here for one last time, that the limsup in (\ref{deffnforbdy}) -- which will occur numerous times in the sequel -- is an abbreviation for 
\[ 
\limsup_{k \to \infty} ( \max\{ \vert c_J z^J \vert^{1/k} : \vert J \vert =k \}.
\]
Similar understanding applies to all occurrences of limsup of countable subsets of real numbers indexed by $\mathbb{N}^N$, prior to (\ref{deffnforbdy}) as well; for instance, (\ref{Rpsi}) means that the domain $G$, the logarithmic image of the domain of convergence, 
comprises of those points $s \in \mathbb{R}^N$  which satisfy
\begin{equation} \label{Logiimagedefn}
 \limsup_{k \to \infty} 
\bigg( \max \big\{ \langle \frac{J}{\vert J \vert}, s \rangle + 
\frac{\log \vert c_J \vert}{\vert J \vert} \; : \; \vert J \vert=k \big\} \bigg) <0 .
\end{equation}
A bit more elaborately in the familiar meaning of limsup in terms of subsequential limits, observe that the above condition for a point $s$ to belong to $G$ can be equivalently described as in the following 

\begin{prop} \label{logimagecharac}
The logarithimic image $G_g$ of the domain of convergence of a power series $g(z) = \sum c_J z^J$ consists 
precisely of those points $s \in \mathbb{R}^N$ for which every `subsequential limit' of the form
\[
\lim_{n \to \infty} \Big( \langle \frac{J^n}{\vert J^n \vert}, s \rangle + 
\frac{\log \vert c_{J^n} \vert}{\vert J^n \vert} \Big)
\]
where $\{J^n\}$ is a sequence of distinct elements of $\mathbb{N}^N$ such that the above limit exists,
is negative. This is abbreviated by writing 
\begin{equation}\label{logimageshortfrm}
G_g= \big\{ s \in \mathbb{R}^N \; : \; \limsup_{\vert J \vert \to \infty}
\Big( \langle \frac{J}{\vert J \vert}, s \rangle + 
\frac{\log \vert c_J \vert}{\vert J \vert}\Big) <0 \big\}.
\end{equation}
\end{prop}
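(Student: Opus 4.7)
The plan is to show that the subsequential-limit description of $G_g$ in (\ref{logimageshortfrm}) coincides with the Cauchy--Hadamard description already recorded in (\ref{Logiimagedefn}). Abbreviate $a_J(s) := \langle J/|J|, s\rangle + \log|c_J|/|J|$ and $M_k(s) := \max\{a_J(s) : |J| = k\}$. The task then reduces, for a fixed $s \in \mathbb{R}^N$, to the equivalence of condition (A), that $\limsup_{k \to \infty} M_k(s) < 0$, with condition (B), that every extended-real subsequential limit $\lim_n a_{J^n}(s)$ taken along a sequence $\{J^n\}$ of distinct multi-indices is strictly negative.

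The forward direction (A) $\Rightarrow$ (B) is easy: given distinct $\{J^n\}$ with $a_{J^n}(s) \to L$, the finiteness of each layer $\{J : |J| = k\}$ forces $|J^n| \to \infty$, and the trivial bound $a_{J^n}(s) \leq M_{|J^n|}(s)$ then gives $L \leq \limsup_k M_k(s) < 0$. For the contrapositive direction, suppose $\limsup_k M_k(s) \geq 0$ (possibly $+\infty$) and extract a strictly increasing sequence $k_n$ along which $M_{k_n}(s) \to L \in [0,+\infty]$. The values $M_{k_n}(s)$ are eventually finite, and for such $n$ the finite layer $\{J : |J|=k_n\}$ contains a maximizer $J^n$ realizing $a_{J^n}(s) = M_{k_n}(s)$. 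Distinctness of the $J^n$ is automatic because their $\ell^1$-norms $k_n$ are distinct, and then $a_{J^n}(s) \to L \geq 0$ violates (B).

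I do not expect any substantive obstacle: the statement is in essence a reformulation of the Cauchy--Hadamard identity, and its content reduces to the elementary observation that the value of a $\limsup$ of a double-indexed family equals the supremum of its subsequential limits along sequences of distinct indices --- the finiteness of each layer $\{J : |J|=k\}$ being exactly what allows one to pass between ``max over a layer'' and ``unrestricted subsequential limit.'' The only care required is extended-real bookkeeping ($M_k(s)$ may be $-\infty$ on layers where all coefficients vanish, and $L$ may be $+\infty$), but both cases are handled transparently by the inequalities above, which is why I rate the step as routine rather than delicate.
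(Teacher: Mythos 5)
Your proposal is correct and follows essentially the same route as the paper: the forward direction via the trivial bound $a_{J^n}(s)\leq M_{|J^n|}(s)$ together with the observation that distinctness forces $|J^n|\to\infty$, and the converse by selecting maximizers over each layer $\{J : |J|=k_n\}$ along a subsequence realizing the $\limsup$. If anything, your write-up is slightly more careful than the paper's (explicitly extracting a convergent subsequence of the $M_{k_n}(s)$ and handling the $-\infty$ and $+\infty$ cases), but the idea is identical.
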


\begin{proof}
The proof being obvious, shall be limited to a few sentences for the record. When (\ref{Logiimagedefn}) is given to hold, then every subsequential limit as in the statement of this proposition, is clearly forced to be negative. Conversely, suppose for {\it every} sequence $\{J^n\} \subset \mathbb{N}^N$,
 the associated subsequential limit as in the assertion, is negative. Then, first pick 
 any subsequence $\{k_n\}$ of the natural numbers (which in particular means that $k_n$'s are distinct) and let $J^n$ be an index for which the maximum in
 \[
\max \big\{ \langle \frac{J}{\vert J \vert}, s \rangle + \frac{\log \vert c_J \vert}{\vert J \vert} \; : \; \vert J \vert=k_n \big\} 
 \]
is attained. Note then that $\vert J^n \vert =k_n \to \infty$ as $ n \to \infty$ and that the $J^n$'s are indeed distinct elements of $\mathbb{N}^N$. 
Finally just by supposition, we have
\[
\lim_{n \to \infty} \Big( \langle \frac{J^n}{\vert J^n \vert}, s \rangle + \frac{\log \vert c_{J^n} \vert}{\vert J^n \vert}   \Big)  < 0,
\]
which by definition of $J^n$ means that (\ref{Logiimagedefn}) holds, completing the proof.
\end{proof}

\noindent Henceforth, we shall interpret limsup (and in an analogous fashion, liminf) of all sequences of real numbers that
are indexed by $\mathbb{N}^N$ to be encountered in the sequel, as in the above proposition. As can be surmised from equation (\ref{Rpsi}) and the above proposition, the supporting half-spaces have gradients with non-negative rational coordinates. Indeed, this can be established rigorously 
which we skip for brevity and we only record the result in the foregoing notations.

\begin{prop}
Let $q \in \partial G_g$. Then, every supporting hyperplane $H^q$ at $q$ to $\partial G$, is defined by
an affine linear functional of the form $A_q(s) := u_1s_1+ \ldots + u_N s_N + d^q$
where $d^q$ is the signed-distance, measured in the $l^\infty$-norm, of $H^q$ from the origin; and, 
$u=(u_1, \ldots,u_N)$ is a unit vector in the $l^1$-norm, with all its coordinates non-negative. Thus, the supporting half-spaces for $G$
are of the form $ \{s \in \mathbb{R}^N \; : \; A_q(s)<0\}$.
\end{prop}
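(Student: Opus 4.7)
The plan is to exploit a key order-theoretic property that $G = \lambda(D)$ inherits from the completeness of the Reinhardt domain $D$, namely that $G$ is \emph{downward-closed} in the coordinate-wise partial order on $\mathbb{R}^N$: if $s \in G$ and $s' \leq s$ componentwise, then $s' \in G$. This follows directly from the definition: writing $s = \lambda(z)$ with $|z_j| = e^{s_j}$, any $w$ with $|w_j| = e^{s'_j} \leq |z_j|$ lies in $D$ by the complete Reinhardt property, and $\lambda(w) = s'$; coordinates tending to $-\infty$ are handled by approximation in $\overline{G}$.

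Next I would fix $q \in \partial G$ and an arbitrary supporting hyperplane $H^q$ at $q$ with outward unit normal $u$, meaning $\langle u, s - q \rangle \leq 0$ for all $s \in \overline{G}$. To show every $u_j \geq 0$, suppose for contradiction that $u_k < 0$ for some index $k$. Pick $s^n \in G$ with $s^n \to q$; downward-closure gives $s^n - \epsilon e_k \in G$ for each $\epsilon > 0$, hence $q - \epsilon e_k \in \overline{G}$. But then $\langle u, (q - \epsilon e_k) - q \rangle = -\epsilon u_k > 0$, contradicting the supporting-hyperplane inequality. Since $u$ is nonzero with non-negative entries, $|u|_{l^1} = \sum_j u_j > 0$, so rescaling (which alters neither $H^q$ nor the half-space it bounds) yields the normalization $|u|_{l^1} = 1$.

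With this normalization, the equation of $H^q$ reads $A_q(s) = \langle u, s \rangle + d^q = 0$ with $d^q = -\langle u, q \rangle$, and $G$ lies in the open half-space $\{s : A_q(s) < 0\}$ (the strict inequality on the open set $G$ is standard: if $A_q(s) = 0$ at some $s \in G$, a small displacement in the direction $u$ remains in $G$ yet violates the half-space inequality). Finally, the identification of $d^q$ with the signed $l^\infty$-distance from the origin to $H^q$ is just norm duality: since $(l^\infty)^* = l^1$, the $l^\infty$-distance from a point $p$ to $\{s : \langle u, s \rangle = c\}$ equals $|\langle u, p \rangle - c|/|u|_{l^1}$; taking $p = 0$, $c = -d^q$, and $|u|_{l^1} = 1$ yields $|d^q|$, with the sign of $d^q$ encoding on which side of $H^q$ the origin lies. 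The only non-routine ingredient is the downward-closure argument governing the sign of the normal; the normalization step and the distance identification are then routine convex-geometric facts.
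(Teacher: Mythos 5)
Your proof is correct. Note that the paper itself offers no proof of this proposition -- it states that the result ``can be established rigorously which we skip for brevity,'' gesturing only at equation (\ref{Rpsi}) and proposition \ref{logimagecharac}, i.e.\ at the Cauchy--Hadamard description of $G$ as the sublevel set $\{\psi<0\}$ of a limsup of affine functions whose gradients $\pi(J)$ all lie in $PS_N$. The route the paper hints at would extract the non-negativity of the normal from the fact that $\psi$, being a limsup of affine functions that are non-decreasing in each coordinate, is itself non-decreasing in each coordinate. You instead derive the same monotonicity -- downward-closedness of $G$ in the coordinatewise order -- directly from the complete Reinhardt property of $D$ (a union of concentric polydiscs), and then run the standard contradiction against the supporting-hyperplane inequality to force $u_j\geq 0$, followed by $l^1$-normalization and the $l^1$--$l^\infty$ duality to identify $d^q$. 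Both roads lead through the same monotonicity fact; yours has the advantage of not depending on $G$ arising from a power series at all (only on $D$ being complete Reinhardt and log-convex), while the paper's intended route ties the statement to the coefficient formula it has just developed. One minor remark: your aside about ``coordinates tending to $-\infty$'' is unnecessary, since $s'\leq s$ with $s'\in\mathbb{R}^N$ already has finite coordinates and $w$ with $\vert w_j\vert=e^{s'_j}>0$ lies in the polydisc witnessing $s\in G$; the argument is complete without it.
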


\noindent We shall denote the $l^1$-norm on $\mathbb{R}^N$ simply by $\vert \cdot \vert$; its unit sphere in this norm is the standard 
simplex $S_N$ and its intersection with the non-negative orthant of $\mathbb{R}^N$ popularly called the probability simplex, is given by
\[
PS_N = \{ x \in \mathbb{R}^N \; :\;  x_1  + \ldots +  x_N  = 1, \text{ and }  x_j  \geq 0 \text{ for all } j\}
\]
which may be noted to be the convex hull of the standard basis of $\mathbb{R}^N$. The foregoing proposition implies that the normalized effective domain of the support function which we earlier denoted $S_h$, is contained in $PS_N$ and henceforth for emphasis on this feature, we shall denote
$S_h$ by $PS_h$.\\

\noindent Rational points on $PS_N$ will come up recurrently in the sequel as they have already; we introduce some notations for them. For all $K \in \mathbb{N}^N \setminus \{0\}$, let 
\[
\pi(K)=(\frac{k_1}{k_1+ \ldots +k_N}, \ldots , \frac{k_N}{k_1+ \ldots +k_N})
\]
where $\pi: \mathbb{R}^N \to S_N$ denotes the projection onto the unit sphere $S_N$ given by $\pi(z)=z/\vert z \vert$. The set of all such points
$\pi(K)$ forms precisely the set $PS{\mathbb{Q}^N}$ of points on $PS_N$ with rational coordinates (which needless to say, is a countable dense subset of $PS_N$). Other $l^p$-norms 
(for $1\leq p \leq \infty$) if needed, will be denoted by  $\vert \cdot \vert_{l^p}$.\\ 

A notation that will be recurrently used in the sequel throughout is the following. For each $\alpha \in PS_N$, we denote by $S_\alpha$, the set of all 
sequences $\{J^n\}$ of distinct elements in $\mathbb{N}_0^N\setminus \{0\}$ with $\pi(J^n) \to \alpha$ as $n \to \infty$; here and as always throughout, 
$\mathbb{N}_0 = \mathbb{N} \cup \{0\}$. 

\section{Proofs of the main theorems}
\noindent By a fundamental theorem of convex geometry namely theorem \ref{cvxsetrep}, we may write $G$ as:
\begin{equation} \label{cvxdombasic}
G= \bigcap_{\alpha \in PS_h} \{s \in \mathbb{R}^N \; : \; A_\alpha(s) <0 \}
\end{equation}
where $A_\alpha(s)= \langle \alpha, s\rangle - h(\alpha)$, $h$ is the support function of the convex domain $G$ with $PS_h$ denoting its {\it normalized effective domain} defined by
\[
PS_h = \{ \alpha \in S_N \; : \; h(\alpha) \text{ is finite } \}.
\]
However from the point of view of our goal, (\ref{cvxdombasic}) requires fine-tuning
as this is an expression of $G$ as an intersection of supporting half-spaces parametrized by 
$\alpha \in PS_h$ which in general is uncountable, whereas we will need to 
re-express (\ref{cvxdombasic}) in countable-terms. This is rendered in general, by the following  result valid for all closed convex sets
which does not seem to be available in the literature and therefore provide a proof, as it maybe of some independent interest as well.
\begin{thm} \label{cvxgeomfound}
Let $C \subset \mathbb{R}^N$ be any closed convex set with support function $h$ whose normalized effective domain we denote and define by
\[
S_h = \{ \alpha \in S_N \; : \; h(\alpha) \text{ is finite } \}= (E_h \setminus \{0\}) \cap S_N
\] 
where $E_h$ is the effective domain of $h$.
Let $\mathcal{C}$ be any countable dense subset of $S_h$.
Then
\begin{equation} \label{countblexp}
C = \bigcap_{\alpha \in \mathcal{C}} \{ s \in \mathbb{R}^N \; : \; A_\alpha(s) \leq 0 \}
\end{equation}
where $A_\alpha(s) = \langle \alpha, s \rangle - h(\alpha)$.
\end{thm}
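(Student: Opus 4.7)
The inclusion $C\subseteq\bigcap_{\alpha\in\mathcal{C}}\{s:A_\alpha(s)\le 0\}$ is immediate from the definition of $h$: for any $s\in C$ and any $\alpha\in\mathcal{C}\subseteq S_h$ one has $\langle\alpha,s\rangle\le h(\alpha)$. The substantive content is the reverse inclusion, which I plan to establish by contraposition: given $s_0\notin C$, I will produce an $\alpha\in\mathcal{C}$ with $A_\alpha(s_0)>0$.

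Since $C$ is closed and convex and $s_0\notin C$, Theorem \ref{cvxsetrep} supplies some $v\in S_h$ with $\epsilon:=\langle v,s_0\rangle-h(v)>0$. The naive plan would be to invoke density of $\mathcal{C}$ in $S_h$ to choose $\alpha\in\mathcal{C}$ close to $v$ and transfer the strict inequality by continuity. This works at once if $h$ is continuous at $v$; however $h$ is in general only lower semi-continuous on its effective domain $E_h$, and may jump upward as one moves within $S_h$ toward a point on the relative boundary of $E_h$. Controlling this potential jump is the main obstacle.

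The workaround is to first nudge $v$ into the relative interior of $E_h$. (If $S_h=\emptyset$ then $E_h=\{0\}$, which forces $C=\mathbb{R}^N$ and renders both sides of \eqref{countblexp} trivially equal, so I assume $S_h\neq\emptyset$.) Fix any $w\in\mathrm{ri}(E_h)\cap S_N$; such $w$ exists because $E_h$ is a convex cone strictly larger than $\{0\}$ and $\mathrm{ri}(E_h)$ is closed under positive scaling. Form $v_t=(1-t)v+tw$ for $t\in(0,1]$; by the standard line-segment principle for convex sets, $v_t\in\mathrm{ri}(E_h)$. Set $\beta_t:=v_t/|v_t|\in\mathrm{ri}(E_h)\cap S_N$. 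As $t\to 0^+$, $\beta_t\to v$ and $\langle\beta_t,s_0\rangle\to\langle v,s_0\rangle$; convexity of $h$ yields $h(v_t)\le(1-t)h(v)+th(w)$, lower semi-continuity of $h$ gives $\liminf_{t\to 0^+}h(v_t)\ge h(v)$, and since $|v_t|\to 1$ these combine to force $h(\beta_t)\to h(v)$. Hence for $t$ sufficiently small the point $\beta:=\beta_t$ satisfies $\langle\beta,s_0\rangle-h(\beta)>\epsilon/2$.

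With $\beta\in\mathrm{ri}(E_h)\cap S_N$ in hand, Theorem \ref{contofcvxfns} guarantees that $h$ is continuous at $\beta$ when restricted to $\mathrm{ah}(E_h)$; moreover $\mathrm{ri}(E_h)$ is open in $\mathrm{ah}(E_h)$, so any point of $\mathrm{ah}(E_h)$ close enough to $\beta$ automatically lies in $\mathrm{ri}(E_h)$, where $h$ is finite and continuous. Because $\mathcal{C}\subseteq S_h\subseteq E_h\subseteq\mathrm{ah}(E_h)$, density of $\mathcal{C}$ in $S_h$ then produces some $\alpha\in\mathcal{C}$ so close to $\beta$ that $h(\alpha)$ and $\langle\alpha,s_0\rangle$ are as close to $h(\beta)$ and $\langle\beta,s_0\rangle$ as desired, and in particular $A_\alpha(s_0)=\langle\alpha,s_0\rangle-h(\alpha)>0$. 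This contradicts $s_0$ belonging to the right-hand side of \eqref{countblexp}, finishing the proof.
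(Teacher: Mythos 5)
Your proof is correct, but it is architecturally different from the one in the paper, even though both hinge on the same analytic obstruction (the support function $h$ is guaranteed to be continuous only on the relative interior of its effective domain, cf.\ theorem \ref{contofcvxfns}). The paper argues directly with the inclusion of sets: it first shows that the intersection indexed by ${\rm relint}(S_h)$ already equals the intersection indexed by all of $S_h$, by restricting $h$ to a segment $l\subset S_h$ ending at an effective-boundary point $\beta$ and using the one-variable fact that a convex function on an interval satisfies $h(\beta)\geq\lim_{l\ni\alpha\to\beta}h(\alpha)$; it then passes from ${\rm relint}(S_h)$ to $\mathcal{C}\cap{\rm relint}(S_h)$ by continuity. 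You instead argue by contraposition: separate $s_0\notin C$ by some $v\in S_h$ via theorem \ref{cvxsetrep}, then perturb the separating direction $v$ into ${\rm ri}(E_h)$ along the segment toward a fixed $w\in{\rm ri}(E_h)\cap S_N$, controlling $h$ along the way. Your key estimate $h(v_t)\leq(1-t)h(v)+th(w)$ is exactly the same upper-semicontinuity-along-a-segment fact the paper extracts from one-variable convexity, just derived from the convexity inequality directly (the lower-semicontinuity half of your argument, giving the full limit $h(\beta_t)\to h(v)$, is not actually needed --- the one-sided bound $\limsup_t h(\beta_t)\leq h(v)$ already keeps $\langle\beta_t,s_0\rangle-h(\beta_t)$ bounded below by a positive quantity). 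What your route buys: you work with the cone $E_h$ and normalize at the end, which sidesteps the paper's need to produce a segment inside the spherical set $S_h$ itself with all but one endpoint in ${\rm relint}(S_h)$, and your treatment of the degenerate case $S_h=\emptyset$ is explicit where the paper is silent. What the paper's route buys: it never invokes the separation theorem beyond the initial representation of $C$ as an intersection of half-spaces, and its two-stage reduction of the index set is reusable verbatim in the later argument around equation (\ref{ourcountblexp}). One cosmetic remark: since you set the argument up as a contrapositive, the closing sentence should conclude ``hence $s_0$ is not in the right-hand side'' rather than announce a contradiction.
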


\begin{proof}
We shall do this in two steps, first reduce the parametrizing set for the half-spaces 
involved in the fundamental representation of convex domains, from $S_h$ to its relative
interior ${\rm relint}(S_h)$ and thereafter in the next step, further to 
$\mathcal{C}^i := \mathcal{C} \cap {\rm relint}(S_h) \cap \mathcal{C}$. So, for the first step we begin
with the just-mentioned fundamental representation of $C$ as the intersection of half-spaces:
\[
C = \bigcap_{\alpha \in S_h} \{ s \in \mathbb{R}^N \; : \; A_\alpha(s) \leq 0 \}.
\] 
We wish to replace $S_h$ by ${\rm relint}(S_h)$ which we may trivially do if the
effective boundary $\partial_e S_h := S_h \setminus {\rm relint}(S_h)$ is empty. If not, 
pick $\beta \in \partial_e S_h$ and observe that $\beta$ is an extreme point of a line segment 
$l$ contained in $S_h$, owing to the fact that $S_h$ is the intersection of the convex cone $E_h \setminus \{0\}$  with the simplex $S_N$. We may actually take $l$ such that $l^0=l \setminus \{\beta\}$ is contained within ${\rm relint}(S_h)$ and we shall henceforth assume so. Restricting the convex function $h$ to this line segment $l$ gives a one-variable
convex function on an interval $I \subset \mathbb{R}$ parametrizing $l$, with one of its 
end-points corresponding to $\beta$; note that the this restriction of $h$ to $l$ (or $l^0$) gives a {\it bounded} function, since $\beta$ is a point of the effective boundary $\partial_e S_h$. By basic estimates about the boundary behaviour of one-variable
convex functions on intervals in $\mathbb{R}$, we have with $l^0$ denoting $l \setminus \{\beta\}$:
\[
h(\beta) \geq \lim_{l^0 \ni \alpha \to \beta} h(\alpha)
\]
where the limit on the right is guaranteed to exist by the fact that (bounded) one-variable convex functions on bounded intervals have one-sided limits at the end-points. Temporarily, let
\[
C' = \bigcap_{\alpha \in {\rm relint}(S_h)} \{ s \in \mathbb{R}^N \; : \; A_\alpha(s) \leq 0 \}.
\]
We are to show $C' \subset C$ to finish our aforementioned first step. To this end, pick any
$s \in C'$. Then, as $l^0 \subset {\rm relint}(S_h)$, we have $\langle \alpha, s \rangle \leq  h(\alpha)$ for {\it all} $\alpha \in l^0$ and so, taking 
the limit as $\alpha \to \beta$, we get 
\[
\langle \beta, s \rangle \leq \lim_{l^0 \ni \alpha \to \beta} h(\alpha) \leq h(\beta).
\]
This means that 
$s \in \overline{H}_\beta := \{ s \in \mathbb{R}^N \; : \; 
\langle \beta,s \rangle - h(\beta) \leq 0 \}$. As $\beta \in \partial_e S_h$ was arbitrary and as we already have $s \in \overline{H}_\beta$
if $\beta \in {\rm relint}(S_h)$ just by definition of $S$, we have altogether $s \in H_\alpha$ for all $s \in S_h$. Thus $s \in \mathcal{C}$. As $s \in C'$ was arbitrary, we have established 
$C' = C$, our 
aforementioned first step.\\

\noindent Now, to finish the proof of the theorem, observe that it suffices to show that if
$s \in \mathbb{R}^N$ satisfies the countably many constraints $A_\alpha(s) \leq 0$ for 
$\alpha$ running through $\mathcal{C}^i : =\mathcal{C} \cap {\rm relint}(S_h)$, then it automatically
satisfies $A_\alpha(s) \leq 0$ for all $\alpha \in {\rm relint}(S_h)$ because $\mathcal{C}^i : =\mathcal{C} \cap {\rm relint}(S_h)$ is also dense in $S_h$. To do so, let $\beta$ now be an arbitrary point of 
${\rm relint}(S_h)$. Apparently, this essentially requires the continuity of the support function of $h$ which however, is not guaranteed on all of $S_h$  (counterexamples exist to falsify such a claim in general). But then again by theorem \ref{contofcvxfns}, $h$ is indeed always continuous on the relative interior ${\rm relint}(S_h)$ of $S_h$ and this suffices to complete the proof as follows. Pick any sequence $\alpha^n \in \mathcal{C}^i$ converging to $\beta$, so that taking limits as $n \to \infty$ in
\[
A_n(s) = \langle \alpha_n, s \rangle - h(\alpha_n) \leq 0
\]
leads, by the continuity of $h$ at $\beta \in {\rm relint}(S_h)$ to $A_\beta(s) \leq 0$. 
As $\beta \in S_h$
was arbitrary, this means that $s$ lies in each of the closed supporting half-spaces for $G$ and thereby that $s$ lies in the closure of $G$. As $s$ was an arbitrary point of the right-hand-side of (\ref{countblexp}), this finishes the proof.
\end{proof}

\noindent So, the above theorem when applied to our particular case $C = \overline{G}$, where $G$ is the logarithmic image of a log-convex complete Reinhardt domain in $\mathbb{C}^N$, implies that we may write:
\begin{equation} \label{ourcountblexp}
C = \bigcap_{\alpha \in \mathcal{C}} \{ s \in \mathbb{R}^N \; : \; A_\alpha(s) \leq 0 \}
\end{equation}
for any countable dense subset $\mathcal{C}$ of $PS_h$, the normalized effective domain
of the support function of $G$ (which coincides with the support function of its 
closure $\overline{G}$); note that $PS_h$ is contained within the positive face of the simplex $S_N$. Having such an expression
for $\overline{G}$  in terms of its support function,
we are now motivated to first derive out of (\ref{Logiimagedefn}) -- which is a formula for the defining function -- an expression for the support function
of $G_g$ for a {\it given} power series $g$ in terms of the coefficients of $g$. We begin with the following rephrased version of (\ref{logimageshortfrm}).
\begin{prop}
For each $\alpha \in PS_N$, let $S_{\alpha}$ be the set of all sequences $\{J^n\}$ in $\mathbb{N}^N$ with $J^n/\vert J^n \vert \to \alpha$ and with $\vert J^n \vert \to \infty$ as $n \to \infty$. Define the constants $d_\alpha$ by 
\[
d_\alpha = \sup \big\{\limsup_{n \to \infty} \big(\frac{\log \vert c_{J^n} \vert}{\vert J^n \vert} \big) \; : \; \{J^n\} \in S_{\alpha} \big\}.
\]
Then:
\begin{equation} \label{rephrased}
G_g = \bigcap_{\alpha \in PS_N} \big\{s \in \mathbb{R}^N \; : \; \langle \alpha, s \rangle + d_\alpha <0 \big\},
\end{equation}
\end{prop}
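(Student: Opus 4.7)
The plan is to prove the two set-theoretic inclusions separately, using the original limsup-of-maxima formulation of $G_g$ in (\ref{Logiimagedefn}) together with its subsequential-limit reinterpretation from Proposition \ref{logimagecharac}. Throughout, for $s \in \mathbb{R}^N$ and $J \in \mathbb{N}_0^N \setminus \{0\}$, it will be convenient to write $a_J(s) = \langle J/\vert J \vert, s \rangle + \log \vert c_J \vert / \vert J \vert$; I would also observe at the outset that any sequence $\{J^n\}$ of distinct multi-indices in $\mathbb{N}_0^N$ automatically satisfies $\vert J^n \vert \to \infty$, since each level set $\{J : \vert J \vert = k\}$ is finite.

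For the forward inclusion, I would fix $s \in G_g$ and any $\alpha \in PS_N$. By (\ref{Logiimagedefn}) there exists $\delta > 0$ with $\limsup_k \max\{a_J(s) : \vert J \vert = k\} \leq -\delta$, so for all $\vert J \vert$ sufficiently large, $a_J(s) \leq -\delta/2$. For any $\{J^n\} \in S_\alpha$, the condition $\vert J^n \vert \to \infty$ then yields $\limsup_n a_{J^n}(s) \leq -\delta/2$; combining this with the exact limit $\langle J^n/\vert J^n \vert, s\rangle \to \langle \alpha, s\rangle$ produces $\limsup_n \log \vert c_{J^n} \vert / \vert J^n \vert \leq -\delta/2 - \langle \alpha, s\rangle$. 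Taking the supremum over $\{J^n\} \in S_\alpha$ gives $d_\alpha \leq -\delta/2 - \langle \alpha, s\rangle$, i.e., $\langle \alpha, s\rangle + d_\alpha \leq -\delta/2 < 0$, which is the desired strict inequality.

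For the reverse inclusion, I would assume $s$ satisfies $\langle \alpha, s\rangle + d_\alpha < 0$ for every $\alpha \in PS_N$ and argue by contradiction. If $s \notin G_g$, the limsup in (\ref{Logiimagedefn}) is $\geq 0$, so one can extract a sequence of distinct multi-indices $\{J^n\}$ along which $a_{J^n}(s) \to \ell$ for some $\ell \in [0, +\infty]$. By compactness of $PS_N$, pass to a further subsequence with $J^n/\vert J^n \vert \to \alpha$ for some $\alpha \in PS_N$; this subsequence now belongs to $S_\alpha$, and
\[
\log \vert c_{J^n} \vert / \vert J^n \vert = a_{J^n}(s) - \langle J^n/\vert J^n \vert, s \rangle \to \ell - \langle \alpha, s \rangle.
\]
By definition of $d_\alpha$ as a supremum, this forces $d_\alpha \geq \ell - \langle \alpha, s\rangle$, whence $\langle \alpha, s\rangle + d_\alpha \geq \ell \geq 0$, contradicting our hypothesis on $s$.

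The main obstacles, modest though they are, lie in tracking the strict versus non-strict inequalities carefully (handled above by extracting a uniform margin $\delta/2$ from the strict inequality defining $G_g$), and in verifying that the extracted subsequences retain all the properties required of elements of $S_\alpha$ -- namely distinctness, $\vert J^n \vert \to \infty$, and $\pi(J^n) \to \alpha$ -- which all follow from compactness of $PS_N$ together with the finiteness of each level set $\{J : \vert J \vert = k\}$.
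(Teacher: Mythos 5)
Your proof is correct and follows essentially the same route as the paper's: the forward inclusion by extracting a uniform negative margin from the defining limsup of $G_g$, and the reverse inclusion by passing to a subsequence whose normalized indices converge to some $\alpha \in PS_N$ and invoking the definition of $d_\alpha$ (the paper phrases this direction directly via Proposition \ref{logimagecharac} rather than by contradiction, but the content is identical). If anything, your handling of the strict-versus-nonstrict inequality in the forward direction via the $\delta/2$ margin is more careful than the paper's.
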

\begin{proof}
We must show the equality of the the already proven expression of $G_g$ in (\ref{logimageshortfrm}) with the intersection in the above statement which we shall briefly denote by $G_g'$ until this proof is complete. To this end, pick $s_0 \in G_g$ i.e., a point $s_0$ satisfying
\begin{equation}\label{sysofineq}
\langle \frac{J}{\vert J \vert}, s_0 \rangle + \frac{\log \vert c_J \vert}{\vert J \vert} < 0
\end{equation}
for all but finitely many $J \in \mathbb{N}^N$. Pick any $\alpha \in PS_N$ and any sequence 
$\{J^n\} \subset \mathbb{N}^N$ for which $J^n/\vert J^n  \vert$ converges to $\alpha$. By (\ref{sysofineq}), we have
\[
\frac{\log \vert c_{J^n} \vert}{\vert J^n \vert}< -\langle\alpha,s_0 \rangle
\]
for all but finitely many $n \in \mathbb{N}$. As this holds for every sequence $J^n$ with
$J^n/\vert J^n  \vert$ converging to $\alpha$, we get 
\[
\sup \big\{\limsup_{n \to \infty} \big( \frac{\log \vert c_{J^n} \vert}{\vert J^n \vert} \big) \; :
 \; \{J^n\} \in S_{\alpha} \big\} \; \; < \; -\langle \alpha, s_0 \rangle
\]
i.e., $s_0$ lies in the open half-space 
$H_\alpha = \{ s \in \mathbb{R}^N \; :\; \langle \alpha,s\rangle + d_\alpha <0 \}$. As $\alpha$
was just an arbitrary point of $PS_N$, we conclude therefore that 
\[
s_0 \in  \bigcap_{\alpha \in PS_N} H_\alpha \; = G_g'.
\]
As $s_0 \in G_g$ was arbitrary, this finishes one direction of the proof, namely $G_g \subset G_g'$. Conversely next, let $s^0 \in G_g'$ and pick any sequence 
$\{J^n\} \subset \mathbb{N}^N$ with the property that
\[
\lim_{n \to \infty} \Big(\langle \frac{J^n}{\vert J^n \vert}, s \rangle + \frac{\log \vert c_{J^n} \vert}{\vert J^n \vert} \Big)  
\]
exists, equal to $L$ say. We need to show $L<0$ to complete the proof of the desired equality $G_g'=G_g$ of this proposition. To this end, pick any limit point $\beta$ of the sequence
 $J^n/\vert J^n \vert$. So,
$\beta = \lim_{k \to \infty}  J^{n_k}/\vert J^{n_k} \vert$  for some subsequence $\{J^{n_k}\; : \; k \in \mathbb{N}\}$ of $\{J^n\}$ and
\[
\langle \beta,s \rangle + \lim_{k \to \infty} \big( 
\frac{\log \vert c_{J^{n_k}} \vert}{\vert J^{n_k} \vert} \big) = L. 
\]
Note that the limit displayed explicitly in the above equation, must exist just by the existence of the limit that defines the number $L$ (and basic limit laws). More importantly, next note that as $G_g'$ is by definition the intersection of the half-spaces $H_\alpha$ for $\alpha$ varying in $PS_N$, 
$s^0 \in G_g'$ means in particular that we have $s^0 \in H_\beta$, which inturn means that $L$ is negative. Thus, {\it every} subsequential limit of the countable subset of reals given by
\[
\Big\{ \langle \frac{J}{\vert J \vert}, s \rangle + 
\frac{\log \vert c_J \vert}{\vert J \vert} : \; \; J \in \mathbb{N}^N \Big\},
\]
is (strictly) negative; so $s^0 \in G_g$. As $s^0$ was an arbitrarily fixed point of $G_g'$, this completes the proof.
\end{proof}
\noindent We remark in passing, that the set $S_\alpha$ involved in the definition of the half-spaces $H_\alpha$ intersecting to 
yield $G_g$ as in the above proposition can be verified to be uncountable by a 
version of the Cantor's diagonal argument. We now move onto drawing out implications of the foregoing proposition. Note that we have by basic convex analysis that 
\[
G_g = \bigcap_{\alpha \in PS_N} \big\{ s \in \mathbb{R}^N \; : \; \langle \alpha, s \rangle  - h(\alpha) <0 \big\}.
\]
Comparing the foregoing pair of representations of $G_g$, using the fundamental fact that for any convex domain, there can be at most one supporting hyperplane 
with a given gradient and combining it with uniqueness of support functions of convex domains, 
we {\it are motivated} to claim the following expression for the support function in terms of the coefficients of the given power series:
\begin{equation*}
h(\alpha) =  -\limsup\limits_{ \{J^n\} \in S_{\alpha}} \frac{\log \vert c_{J^n} \vert}{\vert J^n \vert} := -\sup \big\{\limsup_{n \to \infty} \big(\frac{\log \vert c_{J^n} \vert}{\vert J^n \vert} \big) \; : \; \{J^n\} \in S_{\alpha} \big\}
\end{equation*}
for all $\alpha \in PS_N$. This however may fail to hold. While the aforementioned facts of convex analysis indeed yields a formula for the support function being sought for, in terms of the coefficients of the given power series $g$ the above claim is a bit naive. To point out the issue here briefly, just consider the nice example of the unit ball (with 
respect to the standard $l^2$-norm) $\mathbb{B}^N$ in $\mathbb{R}^N$ or perhaps better still its closure 
$K = \overline{\mathbb{B^N}}$ the compact unit ball, both of which are ofcourse convex. Let $h_K$ denote
as usual the support function (of this simplest instance of a `convex body') and consider the 
representation of $K$ as the intersection of closed half-spaces rendered by it:
\[
K = \bigcap_{\alpha \in \partial \mathbb{B}^N} \{ x \in \mathbb{R}^N \; :\; \langle \alpha,x \rangle
-h_K(\alpha) \leq 0 \}.
\]
Now, observe that replacing the constant terms in the affine functions defining the half-spaces in the above namely $-h_K(\alpha)$, by randomly chosen smaller constants, for $\alpha$ varying through a measure-zero subset of $\partial \mathbb{B}^N$, has no effect on the above intersection and still renders $K$; to bring this point out a bit more clearly, let us do this concretely. Let 
$k(\alpha) = h_K(\alpha)$ for all $\alpha \neq e_1:=(1,0,\ldots,0)$ whereas for $\alpha = e_1$,
let $k(e_1) = h_K(e_1)  + 100$. Then, if we denote by
$K_\alpha = \{ x \in \mathbb{R}^N \; : \; \langle \alpha, x \rangle - k(\alpha) \leq 0\}$ 
for all $\alpha \in \partial \mathbb{B}^N$, the corresponding closed half-spaces, then note that the intersection of the $K_\alpha$'s still render $K$:
\begin{equation} \label{Keqn}
K = \bigcap_{\alpha \in \partial \mathbb{B}^N} \{ x \in \mathbb{R}^N \; :\; \langle \alpha,x \rangle
-k(\alpha) \leq 0 \}.
\end{equation} 
As already indicated, we may likewise alter/increase the values of $h$ at more points than just at one point. But then again, we cannot obviously increase it randomly at `almost all' points; infact we cannot even do so arbitrarily
in any (howsoever small) open subset of $\partial\mathbb{B}^N$, though we shall not rigorously elaborate this here as our purpose was to only point out the main issue with the claim that $h(\alpha) \equiv c(\alpha)$. We are more interested in setting it right, which is done by the following
theorem that gives a characterization of domains of convergence of power series in terms of the
support function of their logarithmic images.
\begin{thm} \label{charbysuppfn}
Let $D$ be any logarithmically convex complete Reinhardt domain in $\mathbb{C}^N$ whose
logarithmic image we denote by $G$. Let $h(\alpha)$ denote the support function of $G$, whose domain we may as well restrict to $PS_N$ i.e., $\alpha$ is restricted to vary only over $PS_N$. Then, the domain of convergence of a
power series $g(z) = \sum c_J z^J$ is $D$ if and only if the support function $h$ of $G$ equals the convex closure of the function $c$ given by
\begin{equation}\label{Main}
c(\alpha) =  - \sup \big\{\limsup_{n \to \infty} \big(\frac{\log \vert c_{J^n} \vert}{\vert J^n \vert} \big) \; : \; \{J^n\} \in S_{\alpha} \big\},
\end{equation}
for all $\alpha \in PS_N$, where (as before) for each $\alpha \in PS_N$, $S_{\alpha}$ denotes the set of all sequences $\{J^n\}$ in $\mathbb{N}^N$ with $J^n/\vert J^n \vert \to \alpha$ and with $\vert J^n \vert \to \infty$ as $n \to \infty$.
\end{thm}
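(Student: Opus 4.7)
The plan is to reduce the biconditional to a single identification that holds unconditionally, namely
\[
h_{\overline{G_g}} = \mathrm{cl}(\mathrm{conv}\, c),
\]
where I extend $c:PS_N \to (-\infty,\infty]$ to $\mathbb{R}^N$ by positive homogeneity on the non-negative cone $\mathbb{R}^N_+$ and by $+\infty$ elsewhere (keeping the notation $c$ for this extension). By the preliminaries lemma asserting that $\mathrm{conv}$ and $\mathrm{cl}$ preserve positive homogeneity, $\mathrm{cl}(\mathrm{conv}\, c)$ is then closed, convex, and positively homogeneous, i.e., a closed sublinear function on $\mathbb{R}^N$. Once the above identification is in hand, the theorem follows at once: two open convex domains coincide iff their closures do iff their support functions do, so $G = G_g$ iff $h_G = h_{\overline G} = h_{\overline{G_g}} = \mathrm{cl}(\mathrm{conv}\, c)$.

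First I identify $\overline{G_g}$. By (\ref{rephrased}) combined with $c(\alpha) = -d_\alpha$, one has $G_g = \bigcap_{\alpha \in PS_N}\{s : \langle \alpha, s\rangle - c(\alpha) < 0\}$; let $A := \bigcap_{\alpha \in PS_N}\{s : \langle \alpha, s\rangle - c(\alpha) \leq 0\}$ be its closed counterpart. Evidently $\overline{G_g} \subseteq A$. For the reverse inclusion, fix $s_0 \in A$ and any $s_1 \in G_g$ (nonempty by hypothesis); for each $\alpha \in PS_N$ with $c(\alpha)$ finite, the inequalities $\langle\alpha, s_0\rangle \leq c(\alpha)$ and $\langle \alpha, s_1\rangle < c(\alpha)$ combine along the segment $s_t := (1-t)s_0 + ts_1$, $t \in (0,1]$, to give $\langle \alpha, s_t\rangle < c(\alpha)$, while the constraint at $\alpha$ with $c(\alpha) = +\infty$ is vacuous. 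Thus $s_t \in G_g$ for $t \in (0,1]$, and letting $t \to 0^+$ yields $s_0 \in \overline{G_g}$.

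Finally I identify $h_A$. The function $\phi := \mathrm{cl}(\mathrm{conv}\, c)$ is closed sublinear, hence the support function of the closed convex set $K := \{s \in \mathbb{R}^N : \langle \alpha, s\rangle \leq \phi(\alpha) \text{ for all } \alpha\}$. The inclusion $K \subseteq A$ follows from $\phi \leq c$. For the reverse, given $s \in A$ the linear functional $L_s(\alpha) := \langle \alpha, s\rangle$ satisfies $L_s \leq c$ on $PS_N$, and by positive homogeneity on $\mathbb{R}^N_+$ together with vacuity off this cone, $L_s \leq c$ on all of $\mathbb{R}^N$. Since $L_s$ is itself a closed convex function, lemma \ref{cvxmaj} yields $L_s \leq \mathrm{cl}(\mathrm{conv}\, c) = \phi$, i.e., $s \in K$. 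Thus $A = K$, whence $h_{\overline{G_g}} = h_A = \phi = \mathrm{cl}(\mathrm{conv}\, c)$. The main obstacle I expect lies in the careful bookkeeping around positive homogeneity and the $+\infty$ values of $c$ (which correspond to directions in which $G$ admits no proper supporting hyperplane); once those are handled, the heart of the argument is simply the support-function / convex-closure duality encoded in lemma \ref{cvxmaj}.
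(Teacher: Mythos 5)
Your proposal is correct and follows essentially the same route as the paper: the paper likewise reduces the theorem to the fact that the closed set $\bigcap_{\alpha}\{s : \langle\alpha,s\rangle - c(\alpha)\le 0\}$ has support function ${\rm cl}({\rm conv}\,c)$, proved by observing (via corollary \ref{cvxmain} and lemma \ref{cvxmaj}) that the homogeneous linear minorants of $c$ are exactly the $\langle\cdot,s\rangle$ with $s$ in that set — which is your $A=K$ step. Your only addition is the explicit line-segment verification that $\overline{G_g}$ coincides with the closed intersection $A$, a point the paper passes over silently.
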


\noindent  The proof of this, is a direct consequence of following proposition which inturn is a straightforward consequence of lemmas in convex analysis recalled in the previous section.
\begin{prop} 
Let $f: \mathbb{R}^N \to (-\infty, \infty]$ be a positively homogeneous function (with $f(0)=0$ as always) and $S_N$ denote the 
unit sphere of $\mathbb{R}^N$ with respect to some given norm. Then
\[
\overline{G} := \bigcap_{\alpha \in S_N} \{ s \in \mathbb{R}^N \; : \; \langle \alpha, s \rangle -  f(\alpha) \leq 0 \}
\]
is a closed convex set whose support function is ${\rm cl}({\rm conv} f)$.
\end{prop}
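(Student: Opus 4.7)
The plan is to leverage the machinery already in the preliminaries, especially Corollary \ref{cvxmain} together with Lemma \ref{poslem}, to reduce the computation of the support function of $\overline{G}$ to identifying the linear functionals majorized by $f$.

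First, $\overline{G}$ is trivially closed and convex, being the intersection of a family of closed half-spaces, so the substantive content is the identification of its support function. As a preliminary simplification, I would use the positive homogeneity of $f$ to extend the defining family of constraints from $\alpha \in S_N$ to all of $\mathbb{R}^N$: for every nonzero $\beta \in \mathbb{R}^N$, writing $\beta = t\alpha$ with $t > 0$ and $\alpha \in S_N$, the inequality $\langle \beta, a\rangle \leq f(\beta)$ is, after dividing by $t$, equivalent to $\langle \alpha, a\rangle \leq f(\alpha)$; and the case $\beta = 0$ holds trivially since $f(0) = 0$. Consequently
\[
\overline{G} = \bigl\{ a \in \mathbb{R}^N \; : \; L_a(x) := \langle a, x\rangle \leq f(x) \text{ for all } x \in \mathbb{R}^N \bigr\},
\]
i.e., $\overline{G}$ is precisely the set of vectors $a$ whose associated homogeneous linear functional $L_a$ is majorized by $f$.

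Next I would compute the support function directly:
\[
h_{\overline{G}}(\beta) \;=\; \sup_{a \in \overline{G}} \langle a, \beta\rangle \;=\; \sup \bigl\{ L_a(\beta) \; : \; L_a \leq f \bigr\}.
\]
At this point Corollary \ref{cvxmain} does the remaining work: since $f$ is positively homogeneous, its convex closure ${\rm cl}({\rm conv}\, f)$ equals the pointwise supremum of the collection of homogeneous linear functions majorized by $f$ (Lemma \ref{poslem} is what permits the reduction from general affine majorants to homogeneous linear ones). In symbols,
\[
{\rm cl}({\rm conv}\, f)(\beta) \;=\; \sup \bigl\{ L_a(\beta) \; : \; L_a \leq f \bigr\},
\]
and comparing this with the previous display yields $h_{\overline{G}} = {\rm cl}({\rm conv}\, f)$, as required.

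There is no real obstacle here; the argument is essentially bookkeeping once the correspondence $a \in \overline{G} \Longleftrightarrow L_a \leq f$ is recognised. The one delicate point worth double-checking is the invocation of Corollary \ref{cvxmain}: one must verify that Lemma \ref{poslem} applies to our positively homogeneous (but possibly not lower semicontinuous) $f$ so as to replace "affine functions $\leq f$" by "homogeneous linear functions $\leq f$" without changing the supremum. Apart from this, a minor sanity check is the degenerate case in which no $L_a$ satisfies $L_a \leq f$, in which case $\overline{G} = \emptyset$, both sides equal $-\infty$ identically (on nonzero vectors), and the identity still holds as a statement in $[-\infty,+\infty]$.
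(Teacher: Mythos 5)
Your proposal is correct and takes essentially the same route as the paper's own proof: both arguments identify $\overline{G}$ (after using positive homogeneity to pass from constraints indexed by $S_N$ to all of $\mathbb{R}^N$) with the set of vectors $a$ whose homogeneous linear functionals $L_a = \langle a, \cdot \rangle$ are majorized by $f$, and then invoke corollary \ref{cvxmain} to equate $\sup\{\langle \alpha, a\rangle : a \in \overline{G}\} = h_{\overline{G}}(\alpha)$ with ${\rm cl}({\rm conv} f)(\alpha)$. Your explicit treatment of the homogeneity reduction and of the degenerate case $\overline{G} = \emptyset$ only makes more careful what the paper leaves implicit.
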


\begin{proof}
Convexity and closedness of $\overline{G} \subset \mathbb{R}^N$ is an immediate consequence of the fact that both convexity and closedness remain preserved under arbitrary intersections. What needs to 
be established is the claim about the support function which always is a positively homogeneous function and thereby it suffices to restrict attention to its values on $S_N$ for the proof; as $f$ is also positively 
homogeneous as given, it thus suffices to compare the support function $h_{\overline{G}}$ 
with  ${\rm cl}({\rm conv} f)$ on $S_N$. To this end, observe that if $s^0 \not \in \overline{G}$ 
(i.e., $s^0 \in \mathbb{R}^N \setminus \overline{G}$) then 
$\langle \beta, s^0 \rangle \not \leq f(\beta)$ for some $\beta \in S_N$ by the very definition
of $\overline{G}$. This means that the homogeneous-linear functions which are majorized by $f$ are {\it precisely}
those of the form $\langle \cdot, s \rangle$ where $s \in \overline{G}$.
Hence, we have by lemma \ref{cvxmain} that
for all $\alpha \in S_N$ (or for that matter all $\alpha \in \mathbb{R}^N$):
\[
{\rm cl}({\rm conv} f ) (\alpha) = \sup\{ \langle \alpha, s \rangle \; : \; s \in \overline{G} \}.
\]
Noting now that the right hand side is precisely $h_{\overline{G}}(\alpha)$ by definition, finishes the proof.
\end{proof}

\noindent As already noted theorem \ref{charbysuppfn} follows easily from this proposition. What needs greater mention is the answer to the question of whether the operation of convex closure is necessary in theorem \ref{charbysuppfn}. To show the necessity, it suffices to give an example where $c(\alpha)$ actually fails to be convex. To furnish a simple such example, consider the power series in the pair of complex variables $z,w$ given by $f_0(z,w)=\sum c_{jk}z^jw^k$ 
where $c_{jk} = 1$ if $j \neq k$ while $c_{jj}=1+2^j$ for all $j,k \in \mathbb{N}_0$. To understand this power series better note that it is just the sum of the standard double geometric series $f_1(z,w)=\sum z^jw^k$ and the geometric series in the monomial $zw$ namely, 
$f_2(z,w)= \sum 2^j (zw)^j$. The domain of convergence of $f_1$ is the standard unit bidisc $\Delta^2 \subset \mathbb{C}^2$ whose logarithmic image $G_1$ is the third quadrant in $\mathbb{R}^2$; for this series, its $c(\alpha)$ coincides with the support function of $G_1$, which is identically zero on $PS_2$. Next, note that the logarithmic image of the domain of convergence of $f_2$ is a half-space $\mathbb{R}^2$ with boundary line $L$
given by the equation $x+y=\log 2$ (which may be noted to be at a good distance away from $G_1$). Note also that the coefficient of $z^jw^k$ as soon as $j \neq k$ in the series $f_2$ is zero. The domain of convergence $D_0$ of $f_0$ is still the bidisc $\Delta^2$, as is easily seen by examining points $(r_1,r_2) \in \mathbb{R}_+^2$ where this series converges. Finally here, observe that the function $c(\alpha)$ associated to the power series $f_0$, agrees with the support function of $G_1$ (the third quadrant in $\mathbb{R}^2$) except precisely for one point in $PS_2$ which is the point in $PS_2$ that represents the gradient of the line $L$. But the support function of $G_1$ vanishes on $PS_2$ and therefore we conclude that $c(\alpha)$ vanishes on all of $PS_2$ except at its interior point $(1/\sqrt{2}, 1/\sqrt{2})$ where the value of $c$ is strictly positive; indeed, $c(1/\sqrt{2}, 1/\sqrt{2})=\log 2$. Thus the $c(\alpha)$ here is visibly non-convex as can be checked just by definition (or if one wishes to know a way for more such examples which may be complicated, note that the maximum principle for convex functions is violated). Let us also mention in passing here that if it so turns out that for a certain power series $f$ with domain of convergence $D$, its associated $c(\alpha)$ is convex, then ofcourse first of all, it is the support function of the logarithmic image $G=\lambda(D)$ by theorem \ref{charbysuppfn}. Observing that we already know how to write down the defining equation for the boundary $\partial G$ by (\ref{Rpsi}), given the coefficients of $f$, determining $c(\alpha)$ in principle is essentially the question of how to find the support function of a convex domain given its defining function. To answer this question, let $G$ now denote any convex domain in $\mathbb{R}^N$ and recall then that the boundary 
$\partial G$ is smooth almost everywhere (the points where $\partial G$ fails to be smooth is also topologically negligible in the sense being of
the first Baire category), as guaranteed by fundamental theorems of convex geometry which can be found for instance in chapter $5$ of \cite{G}. So it is possible to write 
$\partial G = \{ \rho=0\}$ for some convex function $\rho : \mathbb{R}^N \to \mathbb{R}$ which has a well-defined gradient almost everywhere on 
$\partial G$. It is then straightforward to write down the value of the support function for the normal vector to $\partial G$ which makes sense almost everywhere; specifically, at all smooth points of $\partial G$ as,
\[
h(\triangledown \rho(p)) = \langle p, \triangledown\rho(p) \rangle
\]
which agrees with the Legendre transform of $\rho$ for outer normal vectors at all points of the boundary. If we normalize the normal vectors at all points of $\partial G$, so as to be unit vectors in the $l^1$-norm, the set of all such unit normal vectors form a subset $S$ of $S_N$ called the spherical image of $\partial G$. By virtue of the convexity of $G$ and the study \cite{W} on the spherical images of convex hypersurfaces, made by Wu, 
assures us that the closure $\overline{S}$ of the spherical image of $\partial G$, is spherically convex -- 
(this notion is recalled at the end of the subsection above on preliminaries of convex analysis). The properties of convexity and lower-semicontinuity of the support function can then used to determine the values of the support function throughout $\overline{S}$.\\

\noindent Getting back to theorem \ref{charbysuppfn}, let us only note that it says in essence that just as we have a formula connecting the coefficients of a power series $g$ and the defining function of the logarithmic image $G_g$ of its domain of convergence, we have a similar one linking it to the support function of $G_g$, as well. Now, discussions prior to this theorem, around (\ref{Keqn}), lead to some observations which will useful again in the sequel; we therefore record them in the following remark for later reference.

\begin{rem} \label{GenRepCvxDom}
A convex domain $G$ can be expressed in various ways as an intersection of half-spaces i.e., there are plenty of functions
$f: \mathbb{R}^N \to (-\infty,\infty]$ such that if $H_f(\alpha)$ denotes the half-space $\{ s \in \mathbb{R}^N \; : \; \langle \alpha, s \rangle - f(\alpha) <0 \}$, then the intersection of the $H_f(\alpha)$'s give $G$ as:
\begin{equation} \label{Grepbyf}
G = \bigcap_{\alpha \in S_N} \{ s \in \mathbb{R}^N \; : \; \langle \alpha, s \rangle - f(\alpha) <0 \}.
\end{equation}
However, there is one and only one {\it convex} $f$ such that the above representation is valid, namely the support function of $G$. Moreover, the convex closure of every $f$ such that (\ref{Grepbyf}) is valid (i.e., the intersection of the half-spaces $H_f(\alpha)$ as $\alpha$ runs through $S_N$ renders the given convex domain $G$), is the support function of $G$. \\ 
\end{rem}

\noindent As we shall be expressing a power series as a sum of other power series, a first issue to examine in this regard is the relationship
between the domains of convergence of the summands and the original power series. In this direction, we formulate the following lemmas to begin with.

\begin{lem}\label{domcvgcesuminter}
Let $g_1,g_2, \ldots,g_n$ be (finitely) many power series which are mutually monomial-wise completely different. Then, the domain of convergence of the power series given by their sum $g=g_1 + \ldots + g_n$ equals the intersection of the domain of convergence of the $g_j$'s. For an (infinite) sequence of power series $\{g_n\}$ which are mutually monomial-wise completely different, the domain of convergence of the power series given by their sum $g=\sum_{n=1}^{\infty} g_n$ is contained in the intersection of the domain of convergence of the $g_j$'s.
\end{lem}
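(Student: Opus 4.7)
The plan is to reduce both assertions to an elementary observation about non-negative sums indexed by disjoint monomial supports. Denote by $B_j$ the set where $g_j$ converges absolutely (so that $D_{g_j}=B_j^0$ by definition \ref{1stpowdefn}), and write $B$ for the absolute-convergence set of $g$, with $D_g=B^0$. The hypothesis that the $g_j$'s are mutually monomial-wise completely different is precisely the statement that their supporting multi-index sets are pairwise disjoint, which is exactly what is needed to split the absolute-value sum for $g$ cleanly into the corresponding sums for the $g_j$'s.

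The first step is to fix any $z\in\mathbb{C}^N$ and rewrite
\[
\sum_J |c_J^g|\,|z^J| \;=\; \sum_{j}\sum_J |c_J^{g_j}|\,|z^J|
\]
by regrouping monomials according to which $g_j$ they belong to; the rearrangement is legitimate because all terms are non-negative (a finite sum in the first case, Tonelli in the second). In the finite case, a finite sum of non-negative reals is finite iff every summand is finite, giving $B=B_1\cap\cdots\cap B_n$; in the infinite case, convergence of the outer sum merely forces each inner sum to be finite, giving the one-sided containment $B\subseteq\bigcap_n B_n$. The second step is to pass to interiors. For finite intersections, the elementary topological identity $(B_1\cap\cdots\cap B_n)^0=B_1^0\cap\cdots\cap B_n^0$ yields $D_g=D_{g_1}\cap\cdots\cap D_{g_n}$. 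In the infinite case, $D_g=B^0$ is open and contained in each $B_n$, hence contained in each $B_n^0=D_{g_n}$, giving $D_g\subseteq\bigcap_n D_{g_n}$.

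There is no serious obstacle here; the argument is essentially bookkeeping combined with basic properties of non-negative sums and the topology of interiors. The one subtlety I would spell out explicitly is why the inclusion in the infinite case cannot in general be improved to equality. Two independent reasons conspire: even when $z$ lies in every $D_{g_n}$, the iterated double sum $\sum_n\bigl(\sum_J |c_J^{g_n}||z^J|\bigr)$ may well diverge, so $z$ need not belong to $B$; and in any event, $\bigcap_n D_{g_n}$ is an arbitrary intersection of infinitely many open sets and so need not itself be open, whereas $D_g$ is an open domain by construction. Both phenomena will reappear in the sequel when one sums elementary or simple power series as in theorems \ref{eldecompthm} and \ref{structurethm}, and motivate the more refined hypotheses imposed there to force the intersection to coincide with the prescribed $D$.
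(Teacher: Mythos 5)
Your proof is correct and follows essentially the same route as the paper's: both arguments hinge on the observation that, since the monomial supports are pairwise disjoint, there are no like-terms to combine, so absolute convergence of $g$ at a point forces absolute convergence of each $g_j$ there (and conversely in the finite case). Your version merely makes the bookkeeping more explicit — deriving both inclusions from the single non-negative regrouping identity and spelling out the passage from absolute-convergence sets to their interiors — where the paper invokes the containment $D\supset D_1\cap\cdots\cap D_n$ as an elementary fact.
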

\begin{proof}
It is an elementary fact that the domain of convergence of a sum of any finite family of power series contains the intersection of the domains of convergence of the individual members of the family and so, $D \supset D_1 \cap D_2 \cap \ldots \cap D_n$. The hypothesis that the $g_j$'s are mutually monomial-wise completely different means that while summing the $g_j$'s to form $g$, there are then no `like-terms' to be combined. Together with the basic fact that the domain of convergence of every power series is also the domain of its absolute convergence, it therefore follows that the convergence of $g$ at a point $z$ forces the absolute convergence of each of the $g_j$'s and hence $D \subset D_1 \cap D_2 \cap \ldots \cap D_n$ as well, finishing the proof.
\end{proof}

\noindent As the summands involved in our first decomposition theorem are elementary power series, we first give here a way to write down concrete examples of such series; we omit the proof as it is straightforward.  

\begin{lem} \label{half-space-Lem}
Let $\alpha \in PS_N$. Let $\{J^k\} \subset \mathbb{N}^N$ be a sequence with $R_k = \pi(J^k)$ being convergent to $\alpha$. Let $\{c_k\}$ be any 
sequence of complex numbers. Then, the domain of convergence of the associated power series
\[
\sum c_k z^{J^k}
\]
is an elementary Reinhardt domain whose logarithmic image is the half-space given by
\[
\{ s \in \mathbb{R}^N \; : \; \langle \alpha, s \rangle + \limsup_{k \to \infty} \log \vert c_k \vert^{1/\vert J^k \vert} \; <0 \}.
\]
\end{lem}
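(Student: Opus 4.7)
The plan is to apply Proposition \ref{logimagecharac} directly to $f(z) := \sum_k c_k z^{J^k}$ and then simplify using the convergence $\pi(J^k) \to \alpha$. To start, I observe that the only monomials appearing in $f$ are indexed by the $J^k$'s, which may be assumed distinct (otherwise one collects like terms before invoking the formula); since only finitely many multi-indices have any fixed $l^1$-norm, this forces $\vert J^k \vert \to \infty$, legitimising the use of that proposition. By the proposition, $s \in G_f$ iff every subsequential limit
\[
\lim_{m \to \infty} \Big( \Big\langle \frac{J^{k_m}}{\vert J^{k_m} \vert}, s \Big\rangle + \frac{\log \vert c_{k_m} \vert}{\vert J^{k_m} \vert} \Big),
\]
taken along subsequences $\{k_m\}$ for which the limit exists, is strictly negative.

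The next step would be to observe that, because $R_k := J^k/\vert J^k \vert \to \alpha$ by hypothesis, one has $\langle R_{k_m}, s\rangle \to \langle \alpha, s\rangle$ along every subsequence. Hence the elementary identity $\limsup_k(a_k + b_k) = a + \limsup_k b_k$, valid whenever $a_k \to a$, lets us separate variables: the set of all subsequential limits of the displayed expression equals $\langle \alpha, s\rangle + T$, where $T$ is the set of subsequential limits of the scalar sequence $t_k := \log \vert c_k \vert^{1/\vert J^k \vert}$. Since $\sup T = \limsup_k t_k =: L_\ast$, the condition ``every such subsequential limit is negative'' is equivalent to $\langle \alpha, s\rangle + L_\ast < 0$.

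Putting this together immediately gives
\[
G_f = \{ s \in \mathbb{R}^N : \langle \alpha, s\rangle + L_\ast < 0 \},
\]
which is exactly the half-space asserted; consequently $D_f = \lambda^{-1}(G_f)$ is an elementary Reinhardt domain in the sense of Definition \ref{2ndpowdefn}.

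The only point that requires any bookkeeping is the edge case $L_\ast = -\infty$ (so that $f$ is either a polynomial or else $\vert c_k \vert^{1/\vert J^k \vert} \to 0$): the defining inequality is then vacuously satisfied for every $s$, so $G_f = \mathbb{R}^N$ and $f$ is entire, in keeping with the convention of Definition \ref{2ndpowdefn} that $\mathbb{R}^N$ itself is regarded as a (trivial) half-space. I do not anticipate any real obstacle beyond this bookkeeping: the substantive content is already contained in Proposition \ref{logimagecharac}, and the rest is the standard interaction of $\limsup$ with a convergent summand.
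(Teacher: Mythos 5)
Your argument is correct and is exactly the ``straightforward'' computation the paper has in mind (it omits the proof, but the introduction carries out the same reduction via Proposition \ref{logimagecharac} in the special case $J^k = kJ$): pass to distinct indices, note $\vert J^k\vert \to \infty$, and use that a convergent summand $\langle R_k, s\rangle \to \langle \alpha, s\rangle$ pulls out of the $\limsup$. Your handling of the degenerate case $L_\ast = -\infty$ is also consistent with the paper's convention that $\mathbb{R}^N$ counts as a half-space.
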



\begin{lem}\label{samepairlem}
Let $g_1(z) = \sum\limits_{k=1}^{\infty} c_{1,k} z^{I_k}$, $g_2(z) = \sum\limits_{k=1}^{\infty} c_{2,k} z^{J_k}$ be a pair of elementary power series with both the projected sequences of the indices involved $\pi(I_k), \pi(J_k)$ converging as $k \to \infty$, to the same point $\alpha \in PS_N$. Then, the domain of convergence of their sum 
$g(z) = g_1(z) + g_2(z)$ is  the intersection of the domain of convergence of $g_1$ with that of $g_2$, with its logarithmic image given by 
\begin{equation}
G_g = \{ s \in \mathbb{R}^N \; : \; \langle \alpha, s \rangle +d < 0 \}
\end{equation} 
where 
\[
d=\max\{ \limsup_{k \to \infty} \vert c_{1,k} \vert^{1/\vert I_k \vert},\; \limsup_{k \to \infty} \vert c_{2,k} \vert^{1/\vert J_k \vert} \}.
\]
\end{lem}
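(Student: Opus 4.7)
The plan is to combine Lemma \ref{half-space-Lem} (applied to $g_1$ and $g_2$ separately) with the elementary observation that two half-spaces sharing the same outward normal intersect in a nested half-space, and then to invoke Lemma \ref{domcvgcesuminter} to identify $D_g$ with $D_1\cap D_2$.

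First, since $\pi(I_k)\to\alpha$ and $\pi(J_k)\to\alpha$, Lemma \ref{half-space-Lem} applies to each $g_j$ separately and gives
\[
G_j \;=\; \{ s\in\mathbb{R}^N : \langle \alpha, s\rangle + d_j < 0\}, \qquad j=1,2,
\]
with $d_1=\limsup_k \log|c_{1,k}|^{1/|I_k|}$ and $d_2=\limsup_k \log|c_{2,k}|^{1/|J_k|}$. Because $G_1$ and $G_2$ share the same gradient $\alpha$, their intersection is itself a half-space with gradient $\alpha$ and constant the larger of the two, namely
\[
G_1\cap G_2 \;=\; \{ s\in\mathbb{R}^N : \langle \alpha, s\rangle + \max(d_1,d_2) < 0\},
\]
which matches the claimed form for $G_g$ with $d=\max(d_1,d_2)$ (reading the $d$ appearing in the statement with the logarithms implicit from Lemma \ref{half-space-Lem}).

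Next, to prove $D_g = D_1\cap D_2$: the inclusion $D_g\supseteq D_1\cap D_2$ is immediate from the triangle inequality and absolute convergence. For the reverse, in the generic case where $\{I_k\}$ and $\{J_k\}$ are disjoint (so $g_1,g_2$ are mutually monomial-wise completely different), Lemma \ref{domcvgcesuminter} directly gives $D_g=D_1\cap D_2$. When there are shared indices, I would split $g$ as the sum of three mutually monomial-wise completely different elementary series: the restriction of $g_1$ to its non-shared indices, the restriction of $g_2$ to its non-shared indices, and a third piece collecting the combined coefficients $c_{1,k}+c_{2,k'}$ at shared indices. By Lemma \ref{half-space-Lem} each of the three pieces is elementary with logarithmic image a half-space with gradient $\alpha$ containing $G_1\cap G_2$, and Lemma \ref{domcvgcesuminter} then identifies $D_g$ with their triple intersection, whence $G_g=G_1\cap G_2$.

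The main obstacle is precisely the overlapping-indices subcase: cancellation in the combined coefficients could in principle shrink the corresponding limsup and thereby enlarge the third piece's half-space beyond $G_1\cap G_2$, producing a $D_g$ strictly larger than $D_1\cap D_2$. In the uses of the lemma to come in the paper, however, the summands of the decomposition are built out of pairwise disjoint blocks of indices, so this degeneracy does not arise and the direct combination of Lemmas \ref{half-space-Lem} and \ref{domcvgcesuminter} closes the argument.
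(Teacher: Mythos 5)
Your argument in the disjoint-index case is correct and is essentially a light repackaging of the paper's own proof: the paper writes $g$ in standard form, observes that each coefficient $d_J$ of $g$ is either $0$, some $c_{1,k}$, or some $c_{2,k}$, reads off that the relevant $\limsup$ is the claimed $d$, and invokes Proposition \ref{logimagecharac}; you reach the same endpoint by applying Lemma \ref{half-space-Lem} twice and Lemma \ref{domcvgcesuminter} once. Both routes rest on the same Cauchy--Hadamard computation, so nothing essential is gained or lost, and your reading of the $d$ in the statement (with the logarithms restored to match Lemma \ref{half-space-Lem}) is the intended one.

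The more valuable part of your proposal is the cancellation worry, which is genuine: with no hypothesis that the index sequences $\{I_k\}$ and $\{J_k\}$ are disjoint, the lemma as literally stated is false --- take $J_k=I_k$ and $c_{2,k}=-c_{1,k}$, so that $g\equiv 0$ and $D_g=\mathbb{C}^N$ while $D_1\cap D_2$ is a proper elementary Reinhardt domain. The paper's proof tacitly assumes disjointness in the sentence ``$d_J$ is either zero or $c_{1,k}$ or $c_{2,k}$,'' so you and the paper are on the same footing there. However, your closing remark that the later uses of the lemma only involve pairwise disjoint blocks of indices is not accurate: in the proof of Theorem \ref{structurethm} the lemma is applied to $\tilde{\sigma}_n=g_n+f_n/n$, where $g_n$ and $f_n/n$ are supported on the \emph{same} index sequence $\{I^n_k\}$ and the like terms are explicitly combined. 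What saves that application is not disjointness but the particular coefficients: $f_n/n$ has positive coefficients $e^{-\vert I^n_k\vert h(\alpha^n)}/n$, and whenever $c(\alpha^n)>h(\alpha^n)$ these dominate $\vert c_{I^n_k}\vert$ exponentially, so no cancellation can lower the $\limsup$; the borderline case $c(\alpha^n)=h(\alpha^n)$ is genuinely delicate. To make your proof (and the lemma) match its actual use, you should either add a disjointness hypothesis and treat the combined-coefficient situation separately where the lemma is invoked, or add a hypothesis ruling out cancellation along shared indices, namely that the combined coefficients $c_{1,k}+c_{2,k'}$ still realize the larger of the two $\limsup$s.
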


\begin{proof}
If we write $g(z)$ in standard form as $g(z) = \sum d_J z^J$ then $d_J$ is either zero or $c_{1,k}$ or $c_{2,k}$. Consequently, the supremum of the subsequential limits of the subset of reals
given by 
\[
\{ \vert d_J \vert^{1/\vert J \vert} \; : \; J \in \mathbb{N}_0^N \setminus\{0\}\}
\]
is $d$ as in the statement of the lemma.
Proposition \ref{logimagecharac}, then applies to give the asserted description of $G_g$, finishing the proof.
\end{proof}

\begin{lem}\label{cRealiztn}
For each $\alpha$, we can find a sequence $I^\alpha_k$ of distinct elements of $\mathbb{N}_0^N \setminus\{0\}$ with $\{I^\alpha_k : k \in \mathbb{N}\} \in S_{\alpha}$ (i.e., $\pi(I^\alpha_k) \to \alpha$) and with 
\[
\lim_{k \to \infty} \frac{\log \vert c_{I^\alpha_k} \vert}{\vert I^\alpha_k \vert} = -c(\alpha).
\]
\end{lem}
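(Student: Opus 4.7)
The idea is to realize the ``$\sup$ of $\limsup$'s'' defining $-c(\alpha)$ by a single sequence via a diagonal argument. By the definition of $c(\alpha)$ in (\ref{Main}), the number $-c(\alpha)$ is the supremum over all $\{J^n\}\in S_\alpha$ of $\limsup_n \log|c_{J^n}|/|J^n|$. Assuming first that $c(\alpha)\in\mathbb{R}$, for each $m\in\mathbb{N}$ pick a sequence $\{J^{(m),n}\}_{n\in\mathbb{N}}\in S_\alpha$ whose associated $\limsup$ exceeds $-c(\alpha)-\frac{1}{m}$. Since this is a $\limsup$, I can pass to a subsequence of $\{J^{(m),n}\}_n$ along which $\log|c_{J^{(m),n}}|/|J^{(m),n}|$ converges to a genuine limit $L_m$; note $-c(\alpha)-\frac{1}{m}<L_m\leq -c(\alpha)$, so $L_m\to -c(\alpha)$ as $m\to\infty$. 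The cases $c(\alpha)=\pm\infty$ are handled by the analogous modification: replace ``$>-c(\alpha)-1/m$'' by ``$>m$'' (respectively ``$<-m$'') and keep the rest of the argument verbatim.

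\noindent The next step is a standard diagonalization. I enumerate the desired sequence $\{I^\alpha_m\}_{m\in\mathbb{N}}$ by choosing, inductively for each $m\in\mathbb{N}$, an index $k_m$ large enough so that the element $I^\alpha_m:=J^{(m),k_m}$ satisfies all of the following simultaneously:
\begin{itemize}
\item[(a)] $\bigl|\pi(J^{(m),k_m})-\alpha\bigr|<\frac{1}{m}$,
\item[(b)] $\bigl|\log|c_{J^{(m),k_m}}|/|J^{(m),k_m}|-L_m\bigr|<\frac{1}{m}$,
\item[(c)] $|J^{(m),k_m}|>m$,
\item[(d)] $J^{(m),k_m}\notin\{I^\alpha_1,\ldots,I^\alpha_{m-1}\}$.
\end{itemize}
Conditions (a) and (b) are achievable because $\pi(J^{(m),n})\to\alpha$ and $\log|c_{J^{(m),n}}|/|J^{(m),n}|\to L_m$ as $n\to\infty$ (for fixed $m$). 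Condition (c) is achievable because the $J^{(m),n}$ are distinct elements of $\mathbb{N}_0^N\setminus\{0\}$, so only finitely many of them can have $|\cdot|\leq m$. Condition (d) only excludes finitely many further values of $k_m$, so it is compatible with the previous three.

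\noindent With this construction, the resulting sequence $\{I^\alpha_m\}$ consists of distinct elements of $\mathbb{N}_0^N\setminus\{0\}$ by (d), satisfies $\pi(I^\alpha_m)\to\alpha$ by (a), and has $|I^\alpha_m|\to\infty$ by (c); hence $\{I^\alpha_m\}\in S_\alpha$. Finally, combining (b) with $L_m\to -c(\alpha)$ yields
\[
\Bigl|\tfrac{\log|c_{I^\alpha_m}|}{|I^\alpha_m|}-(-c(\alpha))\Bigr|\;\leq\;\Bigl|\tfrac{\log|c_{I^\alpha_m}|}{|I^\alpha_m|}-L_m\Bigr|+|L_m-(-c(\alpha))|\;<\;\tfrac{1}{m}+\bigl(\tfrac{1}{m}+|L_m-(-c(\alpha))|-\tfrac{1}{m}\bigr)\to 0,
\]
proving that $\log|c_{I^\alpha_m}|/|I^\alpha_m|\to-c(\alpha)$. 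The only genuine (minor) obstacle is verifying that conditions (a)--(d) can be met at the same time; (a), (b), (c) are independent asymptotic properties of the $m$-th approximating sequence, and (d) merely excludes a finite set at each stage, so the inductive choice goes through. The extremal cases $c(\alpha)=\pm\infty$ cause no additional trouble once one reads the defining supremum in the extended reals.
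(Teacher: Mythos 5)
Your proof is correct and follows essentially the same route as the paper's: a diagonal argument that selects, from a sequence of sequences in $S_\alpha$ whose limsups approach the supremum $-c(\alpha)$, one term per stage realizing the value to within $1/m$. Your write-up is in fact somewhat more careful than the paper's (which handles distinctness, $\pi(I^\alpha_k)\to\alpha$, and $\vert I^\alpha_k\vert\to\infty$ only in a closing remark), and your explicit treatment of the cases $c(\alpha)=\pm\infty$ is a welcome addition.
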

\begin{proof}
Let us begin with the definition of $c(\alpha)$ namely,
\[
-c(\alpha) = \sup \big\{\limsup_{n \to \infty} \big(\frac{\log \vert c_{J^n} \vert}{\vert J^n \vert} \big) \; :
 \; \{J^n\} \in S_{\alpha} \big\}.
\]
Pick any sequence of elements $\{J^{n,k}\} \in S_{\alpha}$ for $k=1,2,3,\ldots$, such that 
\[
\lim\limits_{k \to \infty} \frac{\log \vert c_{J^{n,k}} \vert}{\vert J^{n,k} \vert} = -c(\alpha).
\]
For each $k \in \mathbb{N}$, pick $n_k \in \mathbb{N}$ such that $J^{n_k,k}$ has the property that
\[
\big\vert \frac{\log \vert c_{J^{n_k,k}} \vert}{\vert J^{n_k,k} \vert} - (-c(\alpha)) \big\vert < \frac{1}{k}.
\]
This means that $I^\alpha_k :=  J^{n_k,k}$ satisfies the limit condition stated in the lemma; observing that 
$J^{n_k,k}/\vert J^{n_k,k} \vert \to \alpha $ we have $I^\alpha_k \in S_\alpha$ as well, finishing the proof.
Note also that after passing to a subsequence if necessary, we may assume -- which we indeed shall assume henceforth -- that the $I^\alpha_k$'s are all distinct (recall that $\vert J^k \vert \to \infty$ as $k \to \infty$ and so the same is now true of the $I^\alpha_k$'s as well).
\end{proof}

\subsection{Proof of theorem \ref{eldecompthm}}
Let us denote the prescribed countable dense subset of $PS_h$ as in statement, by $\mathcal{C}$ enumerated as 
$\{ \alpha^n \; : \; n \in \mathbb{N} \}$, a countable dense subset of {\it distinct} points from $PS_h$. Next, suppose that a power series $g(z) = \sum c_J z^J $ with domain of convergence $D$ has been given. Let $c(\alpha)$ be the real valued function on $PS_N$ defined by the coefficients in $g$ as in the statement of the theorem; so,
\[
-c(\alpha) = \sup \big\{\limsup_{n \to \infty} \big(\frac{\log \vert c_{J^n} \vert}{\vert J^n \vert} \big) \; :
 \; \{J^n\} \in S_{\alpha} \big\}.
\]
Applying lemma \ref{cRealiztn} to each $\alpha^n$, we get a corresponding sequence $\{I^n_k\} \subset S_{\alpha^n}$ such that for each fixed $n \in \mathbb{N}$, we have
\[
\frac{\log \vert c_{I^n_k} \vert}{\vert I^n_k \vert} \to -c(\alpha^n)
\]
as $k \to \infty$. Note that for each $n$, the sequence $\pi(I^n_k) = I^n_k/\vert I^n_k \vert$ converges to $\alpha^n$, as $k \to \infty$.
Using these sequences, we split-up $PS\mathbb{Q}^N$ into 
sequences $\{R^n_k\}$ with $R^n_k \to \alpha^n$ as $k \to \infty$, for each $n \in \mathbb{N}$. Indeed, begin with $\{R^1_k\}$ given by the projection $R^1_k = \pi(I^1_k) \subset PS\mathbb{Q}^N$ which as noted above converges as $k \to \infty$ to the point $\alpha^1 \in \mathcal{C} \subset PS_N$. Next, as $\alpha^2 \neq \alpha^1$ and $\pi(I^2_k) \to \alpha^2$, we note that the sequence $\pi(I^2_k)$ can have atmost finitely many members in common with $\{\pi(I^1_k)\}$; discard away these common terms from $\pi(I^2_k)$ and reindex the sequence $I^2_k$ such that the resulting sequence $R^2_k:= \pi(I^2_k)$ has no terms in common with $\pi(I^1_k)$. Note that this simultaneously also ensures that $\{I^2_k\}$ is also disjoint from the sequence $\{I^1_k\}$. Further next, as before recalling that $\alpha^3$ is distinct from $\alpha^2,\alpha^1$ and 
$\pi(I^3_k) \to \alpha^3$, we note that the sequence $\pi(I^3_k)$ can have atmost finitely many members in common with $\{\pi(I^2_k)\}, 
\{\pi(I^1_k)\}$. Discard these common members from $\pi(I^3_k)$ and reindex the sequence $I^3_k$ such that the resulting sequence $R^3_k:= \pi(I^3_k)$ has no terms in common with $\pi(I^3_k)$. Repeating this process, we have after $l$ steps, mutually disjoint sequences (of distinct members)
$\{R^n_k : n \in \mathbb{N}\} \subset PS\mathbb{Q}^N$ for each $n=1,2, \ldots,l$ such that $R^n_k \to \alpha^n$ as $k \to \infty$. As we repeat the process any number of times ($l$ is arbitrary), we have thus generated (out of the $I^n_k$ as above), a doubly-indexed sequence of rational points $\{R^n_k\; : \; n,k \in \mathbb{N}\} \subset PS\mathbb{Q}^N $ which are distinct and such that for all $n \in \mathbb{N}$, $R^n_k \to \alpha^n$ as $k \to \infty$. Now, it may have happened that this doubly indexed sequence does not exhaust $PS \mathbb{Q}^N$; in such a case, we gather together all the left out rational points and enumerate them in a sequence $r_n$ of distinct members. We then prefix $r_n$ to the sequence $\{R^n_k : k \in \mathbb{N}\}$. Calling the resulting sequences still as $\{R^n_k\}$, we have thus achieved a decomposition of $PS \mathbb{Q}^N$ into mutually disjoint sequences of distinct members $\{R^n_k\}$ (whose union is $PS \mathbb{Q}^N$) with as before, $R^n_k \to \alpha^n$ as $k \to \infty$.
Now, let $\mathcal{R}^n= \{ R^n_k \; : \; k \in \mathbb{N}\}$. Then for each 
$n \in \mathbb{N}$, enumerate the countable set $\pi^{-1}( \mathcal{R}^n)$
as a sequence $\{J^n_k\}$ of distinct elements; note that the projection of 
$\{J^n_k\}$ on $PS_N$ 
is the convergent sequence $\{R^n_k\}$, converging to $\alpha^n$ as $k \to \infty$ i.e., $\{J^n_k\} \in S_{\alpha^n}$.  This implies that 
the series
\[
g_n(z) := \sum\limits_{J \in \pi^{-1}(\mathcal{R}^n)} c_J z^J 
\;\;=\sum\limits_{k \in \mathbb{N}} c_{J^n_k} z^{J^n_k}
\]
(where the coefficients $c_J$ of the monomial $z^J$ are exactly those in the given power series $g$) which is a sub-series of the given $g$, has as its domain of convergence by lemma \ref{half-space-Lem}, the elementary domain
$D_n = \lambda^{-1}(G_n)$ where $G_n$ is the half-space 
\[
G_n = \{ s \in \mathbb{R}^N \; : \; \langle \alpha^n, s \rangle + 
\limsup_{k \to \infty} \log \vert c_{J^n_k} \vert^{1/\vert J^n_k \vert } <0 \}.
\]
Now, the definition of $c(\alpha)$ and the fact that $\{J^n_k\} \in S_{\alpha^n}$ gives 
$\log \vert c_{J^n_k} \vert^{1/\vert J^n_k \vert } \leq -c(\alpha^n)$.
However, recalling the construction of the $J^n_k$'s and observing that for each fixed 
$n \in \mathbb{N}$, the sequence $\{J^n_k \; : \; k \in \mathbb{N}\}$ contains our original sequence $\{I^n_k\}$ as a subsequence, we actually deduce that 
\[
\limsup_{k \to \infty} \log \vert c_{J^n_k} \vert^{1/\vert J^n_k \vert } = -c(\alpha^n).
\]
Recall that by theorem \ref{charbysuppfn}, the convex closure of $c(\alpha)$ is the support function $h_G$ of $G$. As the convex closure of a function cannot exceed it, we have that $G_n$ contains the supporting half-space 
for $G$ with normal vector $\alpha^n$, namely 
\[
H_n=\{ s \in \mathbb{R}^N \; : \; \langle \alpha^n,s \rangle - h(\alpha^n)<0\}.
\]
While this finishes the proof for the most part, note that we haven't yet checked something more basic namely that the sum of the $g_n$'s actually equals $g$ i.e., we have achieved the desired decomposition of $g$. To this end, first note that as the $R^n_k$'s exhaust $PS\mathbb{Q}^N$, we likewise have
\[
\bigcup_{n \in \mathbb{N}} \pi^{-1}(\mathcal{R}^n) = \mathbb{N}_0^N \setminus \{0\}.
\]
Setting $g_0(z)$ to be the constant function given by the constant term in $g$, we note this means that, every one of the monomials $c_Jz^J$ as $J$ varies through $\mathbb{N}_0^N$ 
(whether $c_J=0$ or not), gets accounted for, in the sense that it occurs as a summand in 
one of the $g_n$'s; indeed in exactly one of them. This inturn means that the sum of the 
$g_n$'s equals the given $g$, provided only that we absorb the constant $g_0$ into any one of the remaining series $g_n$ for $n \geq 1$; for instance, if we absorb $g_0$ into $g_1$ we have:
\begin{equation} \label{eldecomp}
g(z) =  \big(g_0+g_1(z)\big) + g_2(z) + g_3(z) + \ldots
\end{equation}
Indeed, observe that the partial sums of this series $g_0 + g_1 + \ldots + g_n$ are among the partial sums of the given power series $g$ and hence, converges (absolutely and uniformly on compacts) in $D$ to $g$, thus showing that (\ref{eldecomp}) is the sought-for decomposition expressing the given power series as a sum of 
`elementary' power series, as asserted by the theorem, finishing the proof. \qed 

\noindent While the converse of this theorem as stated in the introduction is trivial, one can use theorem \ref{charbysuppfn} and the fact that support functions are continuous on their relative interior to formulate a better converse which assumes less, as follows. We omit the proof as it 
is fully straightforward.

\begin{prop} \label{partialconverse}
Let $\{g_n\}$ be a sequence of power series (not necessarily elementary) which are mutually monomial-wise completely different. Let 
$g$ denote their formal sum, written as a standard power series $\sum c_J z^J$ -- note that there are no `like-terms' to be combined as the 
$g_n$'s are mutually monomial-wise completely different and so, every non-zero coefficient $c_J$ and its associated monomial $c_J z^J$ occurs 
exactly in one of the $g_n$'s. Suppose that the coefficients of the $g_n$'s come together to satisfy the following: the convex closure of the 
function defined by
\[
c(\alpha) =  - \sup \big\{\limsup_{n \to \infty} \big(\frac{\log \vert c_{J^n} \vert}{\vert J^n \vert} \big) \; : \; \{J^n\} \in S_{\alpha} \big\},
\]
agrees with the support function $h=h_G$ of a convex domain $G \subset \mathbb{R}^N$ on a countable dense subset of $PS_h$, 
then $D=\lambda^{-1}(G)$ is  the domain of convergence of $g$. If $c$ itself is convex, then the domains of convergence of the $g_n$'s are pull-backs (under the standard logarithmic map $\lambda$) of half-spaces in $\mathbb{R}^N$ and the intersection of these elementary Reinhardt domains is $D$.
\end{prop}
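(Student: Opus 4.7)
The plan is to combine Theorems \ref{charbysuppfn} and \ref{cvxgeomfound}. Since the $g_n$'s are mutually monomial-wise completely different, the formal sum $g = \sum_n g_n$ is a bona fide power series $\sum c_J z^J$, and the function $c(\alpha)$ in the statement is unambiguously determined from these coefficients. Let $G_g$ denote the logarithmic image of the (a priori unknown) domain of convergence of $g$. Applying Theorem \ref{charbysuppfn} to $g$ itself gives $h_{G_g} = \mathrm{cl}(\mathrm{conv}\, c)$ on all of $\mathbb{R}^N$. The hypothesis thus translates to: the support functions $h_{G_g}$ and $h_G$ coincide on a countable dense subset $\mathcal{C}$ of $PS_h$; what has to be shown is $\overline{G_g} = \overline{G}$.

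The containment $\overline{G_g} \subseteq \overline{G}$ is immediate: by Theorem \ref{cvxgeomfound} and the hypothesis,
\[
\overline{G} \;=\; \bigcap_{\alpha \in \mathcal{C}}\{s \in \mathbb{R}^N \; : \; \langle \alpha, s\rangle \leq h_G(\alpha)\} \;=\; \bigcap_{\alpha \in \mathcal{C}}\{s \in \mathbb{R}^N \; : \; \langle \alpha, s\rangle \leq h_{G_g}(\alpha)\},
\]
and the last intersection trivially contains $\overline{G_g}$. This also gives $h_{G_g} \leq h_G$ everywhere, and in particular $PS_h \subseteq PS_{h_{G_g}}$.

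For the reverse containment $\overline{G} \subseteq \overline{G_g}$, I would first upgrade the equality $h_{G_g} = h_G$ from $\mathcal{C}$ to all of $PS_h$. On $\mathrm{relint}(PS_h)$ both functions are finite and continuous by Theorem \ref{contofcvxfns}, so equality on the dense subset $\mathcal{C} \cap \mathrm{relint}(PS_h)$ extends by continuity; the one-sided limit argument from the first step of the proof of Theorem \ref{cvxgeomfound} then promotes it further to the effective boundary $\partial_e PS_h$, giving $h_{G_g} = h_G$ on all of $PS_h$. Now apply Theorem \ref{cvxgeomfound} to $\overline{G_g}$ with a countable dense subset $\mathcal{C}' \subseteq PS_{h_{G_g}}$ chosen to contain a dense subset of $PS_h$. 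For any $s \in \overline{G}$ and any $\alpha \in \mathcal{C}' \cap PS_h$ one has $\langle \alpha, s\rangle \leq h_G(\alpha) = h_{G_g}(\alpha)$; for $\alpha \in \mathcal{C}' \setminus PS_h$ (if any), the plan is to approximate $\alpha$ by points of $PS_h$ along a line segment inside $PS_{h_{G_g}}$ and invoke lower semi-continuity of $h_{G_g}$, yielding the desired inequality. The main obstacle is precisely this last step, controlling $h_{G_g}$ on the potentially extra piece $PS_{h_{G_g}} \setminus PS_h$; the analogy with the first step of the proof of Theorem \ref{cvxgeomfound} is deliberate and the same trick should apply.

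For the final clause (when $c$ is itself convex): a convex function coincides with its convex closure on the relative interior of its effective domain, so $c = \mathrm{cl}(\mathrm{conv}\, c) = h_G$ holds on $\mathrm{relint}(PS_h)$ by the first part. Lemma \ref{half-space-Lem} then identifies the domain of convergence of each $g_n$ (whose projected indices converge to some $\alpha^n$) as $\lambda^{-1}$ of the half-space $\{s : \langle \alpha^n, s\rangle - c(\alpha^n) < 0\}$; when $\alpha^n \in \mathrm{relint}(PS_h)$ this is precisely the open supporting half-space to $G$ with normal vector $\alpha^n$. Invoking Theorem \ref{cvxgeomfound} once more (with the density of the relevant $\alpha^n$'s in $PS_h$ inherited from the density of $\mathcal{C}$) gives that the intersection of these elementary Reinhardt domains equals $\lambda^{-1}(G) = D$.
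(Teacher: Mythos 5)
Your reduction via Theorem \ref{charbysuppfn} to the statement $h_{G_g}=\mathrm{cl}(\mathrm{conv}\,c)$ and your proof of the inclusion $\overline{G_g}\subseteq\overline{G}$ are correct, and this is surely the route intended (the paper omits the proof altogether, so there is nothing to compare line by line). But the step you yourself flag as ``the main obstacle'' --- controlling $h_{G_g}$ on $PS_{h_{G_g}}\setminus PS_h$ --- is a genuine gap, and the trick you propose cannot close it. The hypothesis constrains $\mathrm{cl}(\mathrm{conv}\,c)$ only on a dense subset of $PS_h$, the normalized effective domain of $h_G$; it says nothing about whether $\mathrm{cl}(\mathrm{conv}\,c)$ is $+\infty$ outside the cone over $PS_h$, and no limiting argument along segments inside $PS_{h_{G_g}}$ can manufacture the inequality $h_G\leq h_{G_g}$ at points where $h_G=+\infty$ but $h_{G_g}$ is finite. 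Concretely, take $N=2$, $g_1(z,w)=\sum_{j\geq 1} z^j$, $g_2(z,w)=\sum_{k\geq 1} w^k$ and $G=\{s\in\mathbb{R}^2 : s_1<0\}$. Then $h_G$ is finite only on the ray through $(1,0)$, so $PS_h=\{(1,0)\}$ and the only countable dense subset is $\{(1,0)\}$ itself; one computes $c(1,0)=0=h_G(1,0)$ and indeed $\mathrm{cl}(\mathrm{conv}\,c)(1,0)=0$, so the hypothesis holds, yet the domain of convergence of $g=g_1+g_2$ is the bidisc $\Delta^2$ rather than $\lambda^{-1}(G)=\Delta\times\mathbb{C}$. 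So the reverse inclusion $\overline{G}\subseteq\overline{G_g}$ can actually fail under the stated hypotheses; the argument can only be completed if one additionally assumes (or reads into the statement) that $\mathcal{C}$ is dense in the normalized effective domain of $\mathrm{cl}(\mathrm{conv}\,c)$ itself, equivalently that $PS_{h_{G_g}}=PS_h$ --- which does hold in the situation of Theorem \ref{eldecompthm} that this proposition is meant to converse. You were right to identify this as the crux; asserting that ``the same trick should apply'' is where the proof breaks.

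A second, smaller problem is in your treatment of the final clause. The $g_n$'s are explicitly \emph{not necessarily elementary}, so you cannot invoke Lemma \ref{half-space-Lem} by writing that the projected indices of each $g_n$ ``converge to some $\alpha^n$'' --- that convergence is precisely what makes a series elementary and it is not among the hypotheses. Convexity of $c$ is a condition on the coefficients of the total series $g$ and by itself says nothing about how the monomials are distributed among the individual summands (consider the trivial decomposition $g_1=g$, $g_n=0$ for $n\geq 2$, for which the conclusion of that clause is plainly not a consequence of convexity of $c$). Whatever the intended reading, that clause needs a hypothesis on the individual $g_n$'s, not one imported silently from the construction in the proof of Theorem \ref{eldecompthm}.
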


\begin{rem}\label{remonc-condn}
The above proposition is an answer towards the question of when is the domain of convergence of an infinite sum of power series, the intersection of the domains of convergence of the individual summands? For this, the requirement that the coefficients of the $g_n$'s come together to satisfy the condition on the associated function $c$, as stated in the theorem is necessary, even if the other condition that they be mutually monomial-wise completely different is satisfied. For otherwise, a series as simple as the geometric series $\sum z^J$ becomes a counter-example; indeed, the domains of convergence of each of the summands which we just take to be the
 monomials $z^J$ is $\mathbb{C}^N$, whereas the domain of convergence of their sum is 
 just the unit polydisc!
\end{rem}

\noindent By summing up suitable elementary power series based on the foregoing ideas, we now construct a power series with its domain of convergence equal to a prescribed logarithmically convex complete Reinhardt domain to prove theorem \ref{mainthm} in the following subsection.

\subsection{Proof of theorem \ref{mainthm}}
\noindent The proof of theorem \ref{mainthm} requires the following lemma.
\begin{lem} \label{ratrectif}
Let $\mathcal{C}$ be any countable dense subset of $PS_N$ enumerated 
as $\mathcal{C} = \{ \alpha^n \; : \; n \in \mathbb{N} \}$. Then, there exists a  doubly-indexed sequence $\{J^{nk} \; : \; n, k \in \mathbb{N} \}$ of
distinct elements of $\mathbb{N}_0^N \setminus \{0\}$ such that:
for each $n \in \mathbb{N}$, 
$R^n_k := \pi(J^{nk})$ converges to $\alpha^n$ as $ k \to \infty$.
\end{lem}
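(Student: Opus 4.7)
\smallskip

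\noindent\textbf{Proof proposal for Lemma \ref{ratrectif}.} The plan is a two-step construction: first pick rational approximations $R^n_k \in PS\mathbb{Q}^N$ of each $\alpha^n$, then lift each such $R^n_k$ to a suitable lattice point $J^{nk} \in \pi^{-1}(R^n_k) \cap (\mathbb{N}_0^N \setminus\{0\})$, choosing the particular lift so that distinctness over all pairs $(n,k)$ is achieved.

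\smallskip

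\noindent For the first step, recall from the preliminaries that $\pi(\mathbb{N}_0^N \setminus\{0\}) = PS\mathbb{Q}^N$, the set of rational points of $PS_N$, and that $PS\mathbb{Q}^N$ is dense in $PS_N$. Since each $\alpha^n$ lies in $PS_N$, one can therefore pick, for each fixed $n \in \mathbb{N}$, a sequence of distinct rational points $\{R^n_k : k \in \mathbb{N}\}\subset PS\mathbb{Q}^N$ with $R^n_k \to \alpha^n$ as $k \to \infty$. For the second step, observe that for any $R \in PS\mathbb{Q}^N$, writing $R$ in lowest terms produces a primitive $J_R \in \mathbb{N}_0^N \setminus\{0\}$, and the fiber of $\pi$ over $R$ (restricted to integer points) is precisely the countably infinite set
\[
\pi^{-1}(R) \cap (\mathbb{N}_0^N \setminus\{0\}) = \{\,m J_R \; : \; m \in \mathbb{N}\,\},
\]
which in particular contains integer points of arbitrarily large $l^1$-norm.

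\smallskip

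\noindent To carry out the second step while enforcing global distinctness, enumerate the pairs $(n,k)$ via any bijection $\mathbb{N} \to \mathbb{N}\times\mathbb{N}$, $j \mapsto (n_j,k_j)$ (e.g.\ Cantor's diagonal enumeration). Define $J^{n_j k_j}$ inductively on $j$: having already defined $J^{n_i k_i}$ for $i<j$ (only finitely many values), choose a positive integer $m_j$ large enough that the element $m_j J_{R^{n_j}_{k_j}}$ is distinct from each of the previously chosen $J^{n_i k_i}$; such $m_j$ exists because only finitely many multiples are forbidden while infinitely many are available. Set $J^{n_j k_j} := m_j J_{R^{n_j}_{k_j}}$. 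Then by construction the doubly indexed family $\{J^{nk}\}$ consists of distinct elements of $\mathbb{N}_0^N \setminus\{0\}$, and for each fixed $n$,
\[
\pi(J^{nk}) \;=\; \pi\bigl(m(n,k)\,J_{R^n_k}\bigr) \;=\; R^n_k \;\longrightarrow\; \alpha^n \quad (k \to \infty),
\]
which is the required property. (One may in addition arrange $\vert J^{nk}\vert \to \infty$ as $k \to \infty$ for each $n$ by taking $m_j$ in the inductive step to also exceed a prescribed threshold depending on $k_j$; this is needed in the application to Theorem \ref{mainthm}.)

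\smallskip

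\noindent The main obstacle anticipated is essentially bookkeeping rather than conceptual: the only delicate point is that distinctness must hold simultaneously across both indices, not just within each row, which is why the inductive selection over a single enumeration of $\mathbb{N}\times\mathbb{N}$ is preferable to any attempt at making independent choices row by row. Beyond that, the argument rests solely on the density of $PS\mathbb{Q}^N$ in $PS_N$ and the fact that each $\pi$-fiber over a rational point is infinite.
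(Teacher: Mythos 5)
Your proof is correct and follows essentially the same elementary route as the paper's: both exploit the density of $PS\mathbb{Q}^N$ in $PS_N$ together with the infinitude of the integer points in each $\pi$-fiber to make a greedy inductive selection; the paper organizes the induction row by row (choosing the $n$-th sequence inside $\mathbb{N}_0^N\setminus\{0\}$ minus the previously chosen rows, whose image under $\pi$ remains dense), whereas you fix the rational targets $R^n_k$ first and then lift along a single diagonal enumeration of the pairs $(n,k)$. Your explicit description of the fiber $\pi^{-1}(R)\cap(\mathbb{N}_0^N\setminus\{0\})$ as the multiples of the primitive vector $J_R$, and your closing remark that one can additionally force $\vert J^{nk}\vert\to\infty$, are accurate and if anything make the distinctness bookkeeping more transparent than the paper's version.
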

\noindent We emphasize that $\{ J^{nl} : n , l \in \mathbb{N} \}$ is a (countable) set of
distinct points i.e., $J^{mk} \neq J^{nl}$ as soon as $m \neq n$ or $k \neq l$. Consequently,  for each $l \in \mathbb{N}$ and, for any $m =1,2, \ldots,l$,
\[
\{ J^{mk}\} \subset  \mathbb{N}_0^N \setminus \bigcup\limits_{i=1}^{m-1}\{J^{ik}:k \in \mathbb{N}\}.
\]
Note however, that the $R^n_k$'s need not be distinct; indeed, in the simple case of
$\mathcal{C} = \{ \alpha^n \; : \; n \in \mathbb{N}\}$ being precisely the rational points i.e., $\mathcal{C} = PS \mathbb{Q}^N$, the lemma above holds perfectly well
with the $J^{nk}$'s taken to be all of $\mathbb{N}_0^N \setminus \{0\}$, in which case for each $n$,
 $\{ R^n_k \; :\; k \in \mathbb{N}$ is a constant sequence. In all cases, $R^n_k = J^{nk}/ \vert J^{nk} \vert$ depends on 
$\mathcal{C}$. \\
\begin{proof}
Pick any sequence $\{J^{1k} \}$ of distinct elements of $\mathcal{N}:=\mathbb{N}_0^N \setminus \{0\}$ with 
$\pi(J^{1k}) \to \alpha^1$ as $k \to \infty$, where $\pi$ denotes as always in this article, the `radial' projection onto $PS_N$ given by $\pi(z)=z/\vert z \vert_{l^1}$. Explicitly:
\[
\Big( \frac{J^{1k}_1}{\vert J^{1k} \vert}, \ldots, \frac{J^{1k}_N}{\vert J^{1k} \vert} \Big) \to (\alpha^1_1, \ldots, \alpha^1_N),
\]
as $k \to \infty$ -- it is trivial to see that such a sequence exists. Next, pick a sequence $\{ J^{2k}\}$, this time in $\mathcal{N} \setminus \{J^{1k} \}$ of distinct points, such that
\[
\Big( \frac{J^{2k}_1}{\vert J^{2k} \vert}, \ldots, \frac{J^{2k}_N}{\vert J^{2k} \vert} \Big) \to (\alpha^2_1, \ldots, \alpha^2_N).
\]
Such a sequence exits, as $\pi\big( \mathcal{N}\setminus \{J^{1k} \} \big)$ is dense in $PS_N \setminus \{\alpha^1\}$. After $l$ steps, we would have sequences $\{J^{lk} \; : \; k \in \mathbb{N}^N\}$ such that for any $m \leq l$, we have 
\[
\{ J^{mk}\} \subset  \mathbb{N}_0^N \setminus \bigcup\limits_{i=1}^{m-1}\{J^{ik}:k \in \mathbb{N}\}
\]
and $J^{mk}/ \vert J^{mk} \vert \to \alpha^m$ as $k \to \infty$. 
Letting $l \to \infty$, this iterative procedure generates the desired doubly-indexed sequence $J^{nk}$ (as $n,k$ both vary in $\mathbb{N}$) in which of no two members are equal and completes the proof.
\end{proof}
\begin{proof}[Proof of theorem \ref{mainthm}]
The half-spaces $H_n$'s which are the logarithmic images of the domains of convergence of the $f_n$'s as in the statement of theorem \ref{mainthm} can be written down by lemma \ref{half-space-Lem}, as 
\begin{align*}
H_n &= \{ s \in \mathbb{R}^N \; : \; \langle \alpha^n, s \rangle + \limsup_{k \to \infty} \log \vert c_{nk} \vert^{1/\vert J^{nk} \vert} \; <0\} \\
&= \{ s \in \mathbb{R}^N \; ; \; \langle \alpha^n, s \rangle + \limsup_{k \to \infty} \log (e^{-h(\alpha^n)}) \; <0 \}\\
&= \{ s \in \mathbb{R}^N \; ; \; \langle \alpha^n, s \rangle - \liminf_{k \to \infty} h(\alpha^n)  \; <0 \}
\end{align*} 
But then as $h(\alpha^n)$ is independent of $k$, the liminf operation in the above may well be dropped to get that 
\[
H_n = \{ s \in \mathbb{R}^N \; ; \; \langle \alpha^n, s \rangle -  h(\alpha^n)  \; <0 \}
\]
which is actually indeed a supporting half-space for $G$, the logarithmic image of the given domain $D$ as in the statement of theorem \ref{mainthm}. Now, observe that if the power series $f$ converges absolutely at $z$, then so does each of the $f_n$'s by virtue of them being mutually monomial-wise completely different; as the domain of convergence of a power series is the domain of absolute convergence as well, we conclude that $G_f \subset H_n$ for all $n$, where $G_f$ denotes the domain of convergence of our power series $f$.
Recalling that by theorem \ref{cvxgeomfound}, these countably many half-spaces $H_n$ suffice to render $G$ by their intersection, 
we therefore have
\begin{equation} \label{FundIncl}
G_f \subset \bigcap_{n=1}^{\infty} H_n = G.
\end{equation}
To obtain the reverse inclusion, we shall compare the support functions of $G_f$ and $G$. To this end, recall that by theorem \ref{charbysuppfn}, the support function $h_f$ of $G_f$ equals the convex closure of a function $c=c_f$ as defined in that theorem and in particular along the sequence 
$\{\alpha^n\}$, we have
\begin{equation}\label{cfeqn}
c_f(\alpha^n) = 
- \sup \big\{\limsup_{m \to \infty} \big(\frac{\log \vert c_{J^m} \vert}{\vert J^m \vert} \big) \; : \; \{J^m\} \in S_{\alpha^n} \big\}.
\end{equation}
For any fixed $\alpha^n$, note that by definition of the coefficients in our power series $f$, we will have the following subsequential limit occuring in the evaluation of the limsup in (\ref{cfeqn}):
\begin{align*}
\limsup_{k \to \infty} \frac{\log \vert c_{J^{nk} } \vert}{\vert J^{nk} \vert} = \limsup_{k \to \infty} 
\frac{\log \vert e^{-\vert J^{nk} \vert h(\alpha^n)} \vert}{\vert J^{nk} \vert} = -h_G(\alpha^n)
\end{align*}
where we have denoted the support function of $G$ by $h_G$ (rather than just by $h$ as above, for some clarity here as there are multiple support functions lurking). This when used in (\ref{cfeqn}) gives $-c_f(\alpha^n) \geq -h_G(\alpha^n)$ i.e., we have $c_f \leq h_G$ on the countable dense subset $\{\alpha^n\}$ of $PS_h$ (where again $h=h_G$). As $h_f$ is a minorant of $c_f$, we have the same inequality with $c_f$ replaced by $h_f$. But then by the density of $\{\alpha^n\}$ in $PS_h$ and the everywhere lower semicontinuity of support functions suffices to ensure persistence of the inequality throughout $PS_h$:
\[
h_f(\alpha) = \liminf_{k \to \infty} h_f(\alpha^{n_k}) \leq \liminf_{k \to \infty} h(\alpha^{n_k}) = h_G(\alpha).
\]
This means in particular that $PS_{h_f} \supset PS_{h_G}$ and more importantly, that 
\begin{align*}
G_f &= \bigcap_{\alpha \in PS_{h_f}} \{s \in \mathbb{R}^N \; : \; \langle \alpha, s \rangle - h_f(\alpha) <0\} \\
&\subset \bigcap_{\alpha \in PS_{h_G}} \{s \in \mathbb{R}^N \; : \; \langle \alpha, s \rangle - h_G(\alpha) <0\}
\end{align*}
yielding that $G_f \subset G$. This when combined with (\ref{FundIncl}) renders the desired equalities: $\tilde{G} = G = G_f$, finishing the proof that the domain of convergence of $f$ is precisely the given domain $D$.
\end{proof}

\noindent As mentioned following the statement of theorem \ref{mainthm} in the introduction, we may now deduce as a corollary here, the 
well-known fact that logarithmically convex complete Reinhardt domains $D$ are domains of holomorphy. Since such domains are completely determined by their absolute images $\vert D \vert$, it suffices to show that for every point $p$ in the absolute boundary $\partial \vert D\vert$ (which are the boundary points of $D$ with non-negative coordinates as well), there exists a holomorphic function on $D$ which is completely singular at $p$;
indeed, if $p=(r_1e^{i \theta_1}, \ldots,r_Ne^{i \theta_N})$ is an arbitrary boundary point of $D$ and $f$ is a holomorphic function on $D$ which is completely singular at $\vert p \vert=(r_1, \ldots,r_N)$, then $f_p(z):=f(e^{-i\theta_1}z_1, \ldots, e^{-i\theta_N}z_N)$ is completely singular at
the given point $p$. Now, it only remains to note that the existence of holomorphic functions singular at the absolute boundary points is manifestly furnished by the power series of theorem \ref{mainthm} whose coefficients are all positive in view of lemma \ref{multiPringsheim}, the multidimensional version of Pringsheim's theorem.\\

\noindent While we are at this point of furnishing simple examples of power series for prescribed domains $D$ of convergence, let us settle a natural question that may be asked here namely, if it is possible to  give an example of power series whose domain of convergence is precisely a given domain $D$ but whose singular set in $\partial D$ is as small as possible; the other extreme namely when the singular set equals $\partial D$
is also no less interesting, infact only more so, but has been already dealt with and recorded well in \cite{Sic}. In the case $N=1$, this is too simple, as shown by the geometric series which is singular at exactly one point on the boundary of its disc of convergence and this settles the question of minimality of the singular set which has always got to be non-empty. As soon as $N>1$, it is impossible for a power series to be singular at just one point on the boundary of its domain of convergence or for it to have only a countable singular set, as implied by the following.

\begin{prop} \label{singsetarbpow}
Let $f$ be any power series whose domain of convergence $D$ is neither empty nor the whole space $\mathbb{C}^N$.  Then,
$\tau(S_f) = \tau(\partial D)$, where $\tau: \mathbb{C}^N \to \mathbb{R}_+^N$ is the absolute map 
$\tau(z) = (\vert z_1\vert, \ldots, \vert z_N \vert)$.
\end{prop}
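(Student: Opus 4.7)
The easy inclusion $\tau(S_f) \subseteq \tau(\partial D)$ follows from $S_f \subseteq \partial D$. For the reverse, I shall show that for each $r = (r_1, \ldots, r_N) \in \tau(\partial D)$, the torus $T_r = \tau^{-1}(r) \cap \partial D$ contains at least one singular point of $f$. Handling first the case where $r_j > 0$ for all $j$, I argue by contradiction: suppose every point of $T_r$ is regular for $f$. Since $T_r$ is compact and the set of regular points is open in $\partial D$, a finite-cover plus identity-theorem argument produces a Reinhardt tube $W = \{w \in \mathbb{C}^N : r_j - \delta < |w_j| < r_j + \delta \text{ for every } j\}$ to which $f$ extends as a single holomorphic $\tilde f$. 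To obtain the Reinhardt shape rather than a union of arbitrary polydiscs, note that a polydisc of radius $\delta$ centered at any $(r_j e^{i\theta_j})_j \in T_r$ contains every $w$ with $||w_j| - r_j| < \delta$ for all $j$ (take $\theta_j = \arg(w_j)$), so the $\delta$-neighborhood of $T_r$ in the max-norm is precisely such a tube; and the local extensions patch consistently because their pairwise overlaps meet the polydisc $P = \{|w_j| < r_j\}$, which by complete-Reinhardt-ness combined with $r \in \partial\tau(D)$ lies entirely in $D$, where the local extensions all agree with $f$.

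For the desired contradiction, I deform the Cauchy coefficient integral. Pick $\rho = (r_1 - \delta/2, \ldots, r_N - \delta/2)$ and $\rho' = (r_1 + \delta/2, \ldots, r_N + \delta/2)$; both tori $T_\rho$ and $T_{\rho'}$ lie in $W$, and moreover $T_\rho \subseteq D$ by completeness since $\rho < r$ componentwise. As $W$ is a product of annuli on which $\tilde f$ is holomorphic, iterated one-variable Cauchy yields
\[
\frac{1}{(2\pi i)^N}\int_{T_\rho}\frac{\tilde f(w)}{w^{J+\mathbf{1}}}\, dw \;=\; \frac{1}{(2\pi i)^N}\int_{T_{\rho'}}\frac{\tilde f(w)}{w^{J+\mathbf{1}}}\, dw
\]
for every $J \in \mathbb{N}_0^N$. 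The left side equals the Taylor coefficient $c_J$ of $f$ by the standard coefficient formula applied within $D$, while the right side is bounded in absolute value by $M/(\rho')^J$ where $M = \sup_{T_{\rho'}}|\tilde f| < \infty$. Hence $\sum |c_J| r^J \leq M \sum (r/\rho')^J < \infty$, forcing $r \in D$ since $D$ is the domain of absolute convergence of $\sum c_J z^J$; this contradicts $r \in \tau(\partial D)$.

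For the residual case where $r \in \tau(\partial D)$ has some $r_j = 0$, I approximate $r$ by a sequence $r^{(n)} \in \tau(\partial D)$ with all positive coordinates: any neighborhood of $r$ in $\mathbb{R}_+^N$ meets $\mathbb{R}_+^N \setminus \overline{\tau(D)}$, and a ray from the origin through such a point exits $\tau(D)$ at a boundary point with all positive coordinates close to $r$ (alternatively, slicing $D$ by setting the vanishing coordinates to zero gives a lower-dimensional complete Reinhardt domain to which one can apply induction). By the case just proved, there exist $p^{(n)} \in S_f$ with $\tau(p^{(n)}) = r^{(n)}$; since the $p^{(n)}$ lie in a bounded subset of $\mathbb{C}^N$, a subsequence converges to some $p \in \partial D$ with $\tau(p) = r$, and since $S_f$ is closed in $\partial D$, $p \in S_f$. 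The main obstacle I foresee is the patching step producing the single-valued extension $\tilde f$ on the entire Reinhardt tube $W$: one must verify that the various local extensions around distinct points of $T_r$ really agree on every overlap, which requires showing that these overlaps always have non-empty intersection with $D$ so the identity theorem applies; once this geometric point is handled, the Cauchy-deformation bookkeeping is routine.
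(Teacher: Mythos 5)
Your argument is correct, but it reaches the contradiction by a genuinely different route from the paper's. The paper also begins by extending $f$ to a Reinhardt neighbourhood of the torus over a strictly positive absolute boundary point, but it then uses log-convexity to locate a single coordinate direction $j$ in which a small push lands outside $\overline{D}$, assembles a Hartogs figure inside $D \cup U$, continues $f$ across it to the polydisc spanned by the pushed-out point $q$, and finally invokes Proposition \ref{singlepowser} (every holomorphic function on a complete Reinhardt domain is represented there by a single power series) to conclude that the original series converges on the strictly larger complete Reinhardt domain $D \cup P_q$. Your proof replaces all of that with the classical contour-deformation argument: once $\tilde f$ is single-valued and holomorphic on the product of annuli $W$, deforming the iterated Cauchy coefficient integral from $T_\rho \subset D$ out to $T_{\rho'}$ yields the estimates $|c_J| \le M/(\rho')^J$, hence absolute convergence on the open polydisc $P_{\rho'} \ni r$, contradicting $r \in \partial D$ directly. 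This buys you two things: you never need the escape-direction claim (the only place the paper's proof uses log-convexity), and you never need Proposition \ref{singlepowser}; the price is the patching lemma on the multiply-connected tube, which you correctly isolate as the crux and dispose of by observing that every nonempty pairwise overlap of the covering polydiscs meets $P_r \subset D$ (this is right: the open chord joining two centres on the torus meets the lens and has all moduli strictly below $r_j$, and the overlaps are connected, so the identity theorem applies). One soft spot, shared with the paper, is the reduction to strictly positive boundary points: the paper simply asserts their density in $\tau(\partial D)$, while your ray argument as stated does not immediately show the exit point $\lambda t$ is close to $r$ (that $\lambda \to 1$ needs a further completeness argument); since you also supply the closedness-of-$S_f$ limiting step, this is a gap of the same size as the one the paper leaves, not a new one.
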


\begin{proof}
To begin with observe that the absolute map $\tau$ when restricted to $D$ is a proper mapping of $D$ onto $\tau(D)$, as well as between their closures. Thus in particular, this restricted map is a closed map and so, $\tau(S_f)$ is a closed subset of $\tau(\partial D)$
Therefore to prove the proposition, it suffices to show that every point $p$ on the strictly positive part of the boundary $\partial D$ lies in $\tau(S_f)$, since such points form a dense subset of $\tau(\partial D)$; here and in the sequel, by the strictly positive part of the 
boundary $\partial D$, we mean the subset of $\partial D$ consisting of strictly positive points i.e., those points with the property that all of their coordinates are strictly positive reals.
Now, we shall argue by contradiction and to this end suppose $p$ is a strictly positive point of $\partial D$ which does not lie in $\tau(S_f)$. This means that the
fiber over $p$ under the mapping $\tau$ namely, the $N$-dimensional torus $\tau^{-1}(p)$, consists only of regular points. Considering the union of the neighbourhoods of each such regular point where $f$ is guaranteed to have a direct analytic continuation, we obtain a Reinhardt domain $V$ (indeed a tubular neighbourhood) containing the compact set $\tau^{-1}(p)$ to which $f$ has a holomorphic extension which we continue to denote by $f$. As
 $\tau$ is an open map, $U:=\tau(V)$ is a neighbourhood in the absolute space of the point $p$. Note that $U$ itself is contained in $V$, indeed $U= V \cap \mathbb{R}_+^N$ where $f$ is well-defined. Let $\delta_0>0$ be such that the $l^\infty$-ball (which is ofcourse a polydisc) of radius 
 $\delta_0$ centered at the point $p$ lies within $U$.  We claim that there exists $\delta \in (0, \delta_0)$ and $j\in \{1,2,\ldots,N\}$ such that if we increase  the $j$-th coordinate of $p$ by $\delta$ while keeping the rest fixed, then the resulting point $p_\delta^j$ will be point which lies in 
 $U \setminus \overline{D}$. The truth of this claim can be verified by an argument by contradiction. Indeed if the claim were false, then it means that for every $\delta \in (0, \delta_0)$ and for {\it every} $j\in \{1,2,\ldots,N\}$, the points $p_\delta^j$ obtained by increasing the $j$-th coordinate of $p$
by $\delta$, all lie in $\overline{D}$. This inturn means that the logarithmically convex hull $H^L_p$ of the points $p_\delta^1, \ldots, p_\delta^N$ must itself be (fully) contained in $\overline{D}$ as $D$ and thereby $\overline{D}$ are logarithmically convex. But then $p$ is an interior point of $H^L_p$ implying that $p$ must be an interior point of $D$, contradicting that $p$ is a boundary point of $D$. This proves the claim that there exists $j\in \{1,2,\ldots,N\}$, such that $p_\delta^j$ lies in $U \setminus \overline{D}$. We assume for convenience that 
$j=N$ (arguments for other possibilities for $j$ are essentially the same, needless to say) and write $p_\delta^N$ simply by $q$ henceforth. By shrinking $\delta$ if necessary, we can choose an (open) `$\delta$-box' $B$ with the points $q$ and $p$ both on the boundary of this box and with 
$\overline{B} \subset U$, where in referring to $B$ as a `$\delta$-box', we mean that $B$ is of the form
\[
B=\{ z \in \mathbb{C}^N \; : \; p_j - \delta < \vert z_j \vert < p_j \; \text{ for all} \; j=1,\ldots,N-1\; \textrm{ and }
p_N < \vert z_N \vert < p_N + \delta \}.
\]
So this `box' is actually a product of annuli in $\mathbb{C}$ but we shall just refer to it as a box for convenience. Note the difference in the qualifying condition on the $N$-th coordinate of a point to qualify to be in $B$ as compared to the defining conditions on the other coordinates. Note also that $q \in \partial B$ which means in particular that the box $B \subset U$ is not contained in $\overline{D}$. Recalling that the entire
closed polydisc $\overline{P}_p$ spanned by the point $p$ is contained in $\overline{D}$ as $\overline{D}$ is a Reinhardt set, we then observe that
$\overline{B} \cup \overline{P}_p \subset \overline{D} \cup U$ and more importantly, that the following `elongated open box' $\tilde{B}$ given by
\[
\tilde{B} = \{ z \in \mathbb{C}^N \; : \; p_j - \delta < \vert z_j \vert < p_j \; \text{ for all} \; j=1,\ldots,N-1\; \textrm{ and }
0 < \vert z_N \vert < p_N + \delta \}.
\]
 is contained
within $D \cup U$. Adjoining to this, a polydisc $\tilde{P}$ of the form
\[
\tilde{P}= \{ z \in \mathbb{C}^N \; : \; 0  \leq \vert z_j \vert < p_j \; \text{ for all} \; j=1,\ldots,N-1\; \textrm{ and } 
0 \leq \vert z_N \vert < \delta \}
\]
gives the Hartogs figure $H=\tilde{P} \cup \tilde{B}$. The foregoing containment observations guarantee that this Hartogs gigure $H$ is contained within $D \cup U$, where our originally given power series has been extended holomorphically and which we are continuing to denote by $f$. By the standard technique employing the Cauchy integral formula and the identity principle, we get a holomorphic extension $f_{\vert_H}$ to the polydisc $P_q$ spanned by $q$ (which agrees with the given $f$ on $P_q \cap D$). But then as $P_q$ is a polydisc centered at the origin,  $D \cup P_q$ is a complete Reinhardt domain and as $f$ has now been holomorphically extended to $D \cup P_q$, we have by proposition \ref{singlepowser} that the originally given power series representing $f$ must actually converge on $D \cup P_q$ in particular, contradicting that $D$ was the domain of convergence. Our proof by contradiction is now complete and as noted at the outset this means that $\tau(S_f)=\tau(\partial D)$.
\end{proof}

\subsection{Proof of theorem \ref{structurethm}}
As before, denote the prescribed countable dense subset of $PS_h$ as in statement, by $\mathcal{C}$ enumerated as 
$\{ \alpha^n \; : \; n \in \mathbb{N} \}$, a countable dense subset of {\it distinct} points from $PS_h$. Next, suppose that a power series $g(z) = \sum c_J z^J $ with domain of convergence $D$ has been given. Let $c(\alpha)$ be the real valued function on $PS_N$ defined by the coefficients in $g$ as in the statement of the theorem; so,
\[
-c(\alpha) = \sup \big\{\limsup_{n \to \infty} \big(\frac{\log \vert c_{J^n} \vert}{\vert J^n \vert} \big) \; :
 \; \{J^n\} \in S_{\alpha} \big\}.
\]
Applying lemma \ref{cRealiztn} to each $\alpha^n$, we get a corresponding sequence $\{I^n_k\} \in S_{\alpha^n}$ of distinct elements of 
$\mathbb{N}_0^N \setminus \{0\}$ such that for each fixed $n \in \mathbb{N}$, we have
\[
\frac{\log \vert c_{I^n_k} \vert}{\vert I^n_k \vert} \to -c(\alpha^n)
\]
as $k \to \infty$. Note that for each $n$, the sequence $\pi(I^n_k) = I^n_k/\vert I^n_k \vert$ converges to $\alpha^n$, as $k \to \infty$. 
The distinctness of $\alpha^n$'s implies that the sequences $\{I^n_k : k \in \mathbb{N}\}$ are mutually disjoint so that altogether, the doubly indexed sequence $I^n_k$ consists only of distinct members. To redress the possibility that the $I^n_k$'s may fail to exhaust $\mathbb{N}_0^N \setminus \{0\}$, we enumerate the left out elements as a sequence $\{P_n\}$ (of distinct elements) and prefix the $n$-th member of this sequence i.e., $P_n$ to the sequence $I^n_k$ and call the resulting sequence still as $I^n_k$. For all $n \in \mathbb{N}$, set
\[
g_n(z)= \sum\limits_{k=1}^{\infty} c_{I^n_k} z^{I^n_k},
\]
where $c_{I^n_k}$ is the coefficient of the monomial $z^{I^n_k}$ in the given power series $g(z)$; so $g_n$ is a sub-series of $g$. Note 
that $g_n$ is an elementary power series, the logarithmic image of whose domain of convergence is the half-space given by
\[
G_n= \{ s \in \mathbb{R}^N \; : \; \langle \alpha^n, s \rangle  + \limsup_{k \to \infty} \log \vert c_{I^n_k} \vert^{1/\vert I^n_k \vert } <0 \}.
\]
While we know that 
\begin{equation}\label{upbdforhalfspcoeff}
\limsup_{k \to \infty} \log \vert c_{I^n_k} \vert^{1/\vert I^n_k \vert } \leq h_G(\alpha^n)
\end{equation}
we cannot claim equality here. On the other hand, equality would hold if we considered a different power series namely, 
\[
f_n(z) = \sum_{k \in \mathbb{N}} c_{nk}z^{I^n_k} 
\]
with $c_{nk}= e^{-\vert I^n_k \vert h(\alpha^n)}$ for all $n,k \in \mathbb{N}$. That is to say, note that the domain of convergence of $f_n$ has 
its logarithmic image equal to the half-space given by 
\[
H_n=\{ s \in \mathbb{R}^N \; ; \; \langle \alpha^n, s \rangle -  h_G(\alpha^n)  \; <0 \}
\]
as deduced in the proof theorem \ref{mainthm}. To obtain the desired decomposition as asserted by the theorem being proved, let 
$\sigma_1=g_1 + f_1$, $\sigma_2=g_1+f_2/2-f_1$, $\sigma_3=g_3 + f_3/3 - f_2/2$ and in general, for all $n \in \mathbb{N}$, let
\begin{equation} \label{sigmadefn}
\sigma_{n+1} = \big(g_{n+1} + \frac{f_{n+1}}{n+1}\big) - \frac{f_{n}}{n}.
\end{equation}
Note that every finite sum of the $\sigma_n$'s is of the form
\[
S_n(z) = \sigma_1(z) + \sigma_2(z) + \ldots + \sigma_n(z) = g_1(z) + g_2(z) + \ldots + g_n(z) + \frac{1}{n}f_n(z).
\]
Observe then that $S_n(z) \to g(z)$ as $n \to \infty$, absolutely and uniformly on compacts subsets of $D$.
 
By (\ref{upbdforhalfspcoeff}), we have $G_n \supset H_n$. By lemma \ref{samepairlem}, the domain of convergence of the powers series given by the sum 
$\tilde{\sigma}_n=g_n +f_n/n$ is just $H_n$, the supporting half-space for $G$ with normal vector $\alpha^n$. Note that the
power series $\tilde{\sigma}_n$ is obtained by combining all the like terms in $g_n$ and $f_n/n$ and once this has been done,
by (\ref{sigmadefn}) the power
   series $\sigma_n$ is the sum of two power series $\tilde{\sigma}_n$ and $-f_{n-1}/(n-1)$ which are mutually monomial-wise completely different. Hence by lemma \ref{domcvgcesuminter}, the domain of convergence of $h_n$ is the intersection of these two series which is precisely 
\[
D_n = \lambda^{-1}(H_n) \cap \lambda^{-1}(H_{n-1})= \lambda^{-1}(H_n \cap H_{n-1}),
\]
the pull-back of the wedge $W_n =H_n \cap H_{n-1}$ formed by the intersection of the pair of half-spaces $H_n, H_{n-1}$ of different normal vectors from the prescribed set $C$ as in statement of the theorem. As the intersection of $H_n$'s (thereby that of $W_n$'s as well) is precisely $G$, we conclude that the intersection of the simple domains $D_n$'s is precisely the given domain $D$ and $\sigma_1 + \sigma_2 + \sigma_3 +\ldots$, gives the desired decomposition of the given power series $g$ into simple power series, finishing the proof.\qed

\end{document}